\newcommand*{\refh}[2]{\hyperref[#2]{#1~\ref{#2}}}
\newcounter{ourcount}
\newcommand\be{\begin{equation}}
\newcommand\ee{\end{equation}}
\newtheorem{theorem}{Theorem}[section]
\newtheorem{corollary}[theorem]{Corollary}
\newtheorem{proposition}[theorem]{Proposition}
\newtheorem{lemma}[theorem]{Lemma}
\theoremstyle{definition}
\newtheorem{definition}[theorem]{Definition}
\theoremstyle{remark}
\newtheorem{remark}[theorem]{Remark}
\DeclareMathSymbol{\widetildesym}{\mathord}{largesymbols}{"65}
\DeclareMathOperator{\Irr}{Irr}
\newcommand{\Z}{\mathbb{Z}}
\newcommand{\R}{\mathbb{R}}
\newcommand{\rmt}{\mathrm{t}}
\newcommand{\rmL}{\mathrm{L}}
\newcommand{\rmV}{\mathrm{V}}
\newcommand{\rmX}{\mathrm{X}}
\newcommand{\bbA}{\mathbb{A}}
\newcommand{\bbB}{\mathbb{B}}
\newcommand{\bbD}{\mathbb{D}}
\newcommand{\bbM}{\mathbb{M}}
\newcommand{\bbS}{\mathbb{S}}
\DeclareRobustCommand{\bbSigma}{\mathbin{\text{\includegraphics[height=\heightof{$\mathbf{\Sigma}$}]{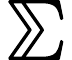}}}}
\newcommand{\calC}{\mathcal{C}}
\newcommand{\calD}{\mathcal{D}}
\newcommand{\calI}{\mathcal{I}}
\newcommand{\calR}{\mathcal{R}}
\newcommand{\calV}{\mathcal{V}}
\newcommand{\calX}{\mathcal{X}}
\newcommand{\frakg}{\mathfrak{g}}
\renewcommand{\epsilon}{\varepsilon}
\renewcommand{\theta}{\vartheta}
\renewcommand{\phi}{\varphi}
\renewcommand{\Gamma}{\varGamma}
\renewcommand{\Sigma}{\varSigma}
\newcommand{\id}{\mathrm{id}}
\newcommand{\tr}{\mathrm{tr}}
\newcommand{\lptr}{\mathrm{tr}_{\mathrm{L}}}
\newcommand{\rptr}{\mathrm{tr}_{\mathrm{R}}}
\newcommand{\lev}{\smash{\stackrel{\leftarrow}{\mathrm{ev}}}}
\newcommand{\lcoev}{\smash{\stackrel{\longleftarrow}{\mathrm{coev}}}}
\newcommand{\rev}{\smash{\stackrel{\rightarrow}{\mathrm{ev}}}}
\newcommand{\rcoev}{\smash{\stackrel{\longrightarrow}{\mathrm{coev}}}}
\DeclareMathOperator{\disjun}{\sqcup}
\DeclareMathOperator{\din}{\dot{\Rightarrow}}
\renewcommand{\leq}{\leqslant}
\renewcommand{\geq}{\geqslant}
\newcommand{\pc}{\mathrm{pc}}
\newcommand{\mods}[1]{\operatorname{\mathnormal{#1}-mod}}
\newcommand{\cat}{\mathcal{C}}
\newcommand{\brk}[1]{{{\left\langle{#1}\right\rangle}}}
\renewcommand{\sl}{\mathfrak{sl}}
\newcommand{\SL}{\mathrm{SL}}
\newcommand{\End}{\mathrm{End}}
\newcommand{\Vect}{\mathrm{Vect}}
\newcommand{\op}{\mathrm{op}}
\DeclareRobustCommand{\one}{\mathbin{\text{\includegraphics[height=\heightof{$\mathbf{1}$}]{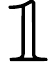}}}}
\newcommand{\Cob}{\mathrm{Cob}}
\newcommand{\adCob}{\check{\mathrm{C}}\mathrm{ob}}
\newcommand{\Proj}{\mathrm{Proj}}
\renewenvironment{quote}{
   \list{}{
     \leftmargin\parindent
     \rightmargin\leftmargin
   }
   \item\relax
}
{\endlist}
\newcommand{\subalign}[1]{
  \vcenter{
    \Let@ \restore@math@cr \default@tag
    \baselineskip\fontdimen10 \scriptfont\tw@
    \advance\baselineskip\fontdimen12 \scriptfont\tw@
    \lineskip\thr@@\fontdimen8 \scriptfont\thr@@
    \lineskiplimit\lineskip
    \ialign{\hfil$\m@th\scriptstyle##$&$\m@th\scriptstyle{}##$\crcr
      #1\crcr
    }
  }
}
\def\clap#1{\hbox to 0pt{\hss#1\hss}}
\newcommand{\brC}{c}
\newcommand{\Tideal}{\mathcal{I}}
\newcommand{\trI}{\rmt}
\newcommand{\eend}{\mathcal{E}} 
\newcommand{\coend}{\mathcal{L}} 
\newcommand{\multL}{\mu}
\newcommand{\unitL}{\eta}
\newcommand{\copL}{\Delta}
\newcommand{\counitL}{\epsilon}
\newcommand{\antipL}{S}
\newcommand{\intL}{\Lambda}
\newcommand{\cointL}{\Lambda^{\mathrm{co}}}
\newcommand{\pairL}{\omega}
\newcommand{\DD}{D}
\newcommand{\T}{\chi}
\newcommand{\ok}{{\ensuremath{\Bbbk}}}
\newcommand{\modT}{\mathcal{T}}
\newcommand{\pic}[2][0]{\raisebox{-0.5\height + 2.5pt + #1pt}{\includegraphics{#2.pdf}}}  
\newcommand{\arxiv}[2]{\href{http://arXiv.org/abs/#1}{\texttt{arXiv:\allowbreak #1} #2}}
\begin{document}

\raggedbottom

\title{3-Dimensional TQFTs From Non-Semisimple Modular Categories}

\author[De Renzi]{Marco De Renzi} 
\address{Department of Mathematics, Faculty of Science and Engineering, Waseda University, 3-4-1 \={O}kubo, Shinjuku-ku, Tokyo, 169-8555, Japan} 
\email{m.derenzi@kurenai.waseda.jp}
\address{Institute of Mathematics, University of Zurich, Winterthurerstrasse 190, CH-8057 Zurich, Switzerland}
\email{marco.derenzi@math.uzh.ch}

\author[Gainutdinov]{Azat M. Gainutdinov}
\address{Institut Denis Poisson, CNRS, Universit\'e de Tours, Universit\'e d'Orl\'eans, Parc de Grandmont, 37200 Tours, France}
\email{azat.gainutdinov@lmpt.univ-tours.fr}

\author[Geer]{Nathan Geer}
\address{Mathematics \& Statistics, Utah State University, Logan, Utah 84322, USA} \email{nathan.geer@gmail.com}

\author[Patureau-Mirand]{Bertrand Patureau-Mirand}
\address{Univ. Bretagne - Sud, UMR 6205, LMBA, F-56000 Vannes, France}
\email{bertrand.patureau@univ-ubs.fr}

\author[Runkel]{Ingo Runkel}
\address{Fachbereich Mathematik, Universit\"at Hamburg,
Bundesstra\ss e 55, 20146 Hamburg, Germany}
\email{ingo.runkel@uni-hamburg.de}

\begin{abstract}
 We use modified traces to renormalize Lyubashenko's closed 3-man\-i\-fold invariants coming from twist non-de\-gen\-er\-ate finite unimodular ribbon categories. Our construction produces new topological invariants which we upgrade to 2+1-TQFTs under the additional assumption of factorizability. The resulting functors provide monoidal extensions of Lyubashenko's mapping class group representations, as discussed in \cite{DGGPR20}. This general framework encompasses important examples of non-semisimple modular categories coming from the representation theory of quasi-Hopf algebras, which were left out of previous non-semisimple TQFT constructions.
\end{abstract}

\maketitle
\setcounter{tocdepth}{2}

\tableofcontents

\date{\today}

\section{Introduction}

In this paper we show how to construct a topological invariant of closed 3-dimensional manifolds out of any finite unimodular ribbon category $\calC$ satisfying a weak non-degeneracy condition, and how to extend it to a $2+1$-dimensional \textit{Topological Quantum Field Theory} (\textit{TQFT} for short) in case $\calC$ is also factorizable. Our results generalize several previous constructions, from Reshetikhin-Turaev TQFTs \cite{T94}, which we recover when $\calC$ is semisimple, to Lyubashenko's mapping class group representations \cite{L94}, as we show in \cite{DGGPR20}, to the family of non-semisimple TQFTs constructed in \cite{DGP17} using finite-dimensional factorizable ribbon Hopf algebras. The advantage of our new approach is that it is applicable to the case of weak Hopf algebras and quasi-Hopf algebras. For example, some important categories, such as those coming from the representation theory of quantum $\sl_2$ at even roots of unity \cite{CGR17}, did not fit in the previous framework, and were therefore not eligible for a TQFT construction up to now.

Let us state our main result. First, recall that, paraphrasing Atiyah \cite{A88}, a $2+1$-TQFT can be defined as a symmetric monoidal functor from a category of cobordisms of dimension $2+1$ to a category of vector spaces over a field $\Bbbk$. The use of indefinite articles in this definition is motivated by the fact that both cobordisms and vector spaces are usually allowed to carry additional structures, which can vary according to the specific construction, and it is customary to refer to all such functors as TQFTs. In this paper, the relevant structure in the definition of our source category depends on the choice of a finite ribbon category $\calC$. Very roughly speaking, it consists in decorations given by special sets of oriented vertices labeled with objects of $\calC$ embedded into surfaces, and by special oriented graphs labeled with objects and morphisms of $\calC$ embedded into cobordisms. The crucial property of these decorations is a certain \textit{admissibility condition} whose goal is to ensure that every connected component of every closed cobordism in our source category contains at least one projective object of $\calC$ among the labels of its embedded graph. This results in the definition of the \textit{admissible cobordism category} $\adCob_\calC$ of Section \ref{S:admissible_cobordisms}. One important difference between $\adCob_\calC$ and usual cobordism categories is that $\adCob_\calC$ is \emph{not} rigid, unless $\calC$ is semisimple. It is actually this property that, under suitable hypotheses for $\calC$, allows us to use $\adCob_\calC$ as the domain of our TQFT\footnote{It is proved in \cite{BDSV15} that a $2+1$-TQFT which extends to a $1+1+1$-ETQFT defined over the whole rigid $2$-category of cobordisms needs to have a semisimple circle category, while the one we construct here determines a non-semisimple circle category \cite{D21}.}. More precisely, we employ the term \textit{modular category} in the non-semisimple sense to denote a finite factorizable ribbon category. Then, let us use the notation $\calC(V,W)$ for the vector space of morphisms from $V \in \calC$ to $W \in \calC$, and let us write $\coend = \int^{X \in \calC} X^* \otimes X \in\calC$ for the \textit{coend} in $\calC$, see Section \ref{S:coend}. We show in Theorem~\ref{T:monoidality} and Proposition~\ref{P:state-space-iso}:

\begin{theorem}\label{T:main}
 If $\calC$ is a modular category over an algebraically closed field $\Bbbk$, then there exists a $2+1$-TQFT $\rmV_\calC : \adCob_\calC \to \Vect_\Bbbk$ mapping every closed surface of genus $g$ decorated with $n$ positive vertices labeled by $V_1, \ldots, V_n \in \calC$ to a vector space isomorphic to the linear dual\footnote{The isomorphism with $\calC(\coend^{\otimes g} \otimes V_1 \otimes \ldots \otimes V_n,\one)$ depends on the choice of a basis, while the one with its linear dual does not.} of
 \[
  \calC(\coend^{\otimes g} \otimes V_1 \otimes \ldots \otimes V_n,\one).
 \]
\end{theorem}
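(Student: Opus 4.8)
The plan is to assemble the two cited results by running the Blanchet--Habegger--Masbaum--Vogel universal construction on the renormalized closed-manifold invariant $\rmV_\calC$ built in the previous sections. For each decorated closed surface $\Sigma$ of $\adCob_\calC$, I would first form the $\Bbbk$-vector space $\calV'(\Sigma)$ freely generated by admissible connecting cobordisms $\emptyset \to \Sigma$, together with the bilinear pairing on $\calV'(\Sigma)$ sending a pair $(M, M')$ to the value of the invariant on the closed cobordism $\overline{M'} \cup_\Sigma M$ obtained by gluing $M$ to the orientation reversal of $M'$ along $\Sigma$. The admissibility condition is exactly what guarantees that every closed cobordism arising this way carries a projective label in each component, hence is eligible for evaluation by the modified trace, so the pairing is well defined. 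Setting $\rmV_\calC(\Sigma)$ to be the quotient of $\calV'(\Sigma)$ by the left radical of this pairing produces the candidate state space, and a cobordism $N : \Sigma_0 \to \Sigma_1$ acts by gluing; the gluing axioms of the invariant ensure this descends to the quotients and respects composition and identities, giving the functor.

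Next I would establish that $\rmV_\calC$ is symmetric monoidal, which is the substance of Theorem~\ref{T:monoidality}. Multiplicativity of the invariant under disjoint union equips the universal construction with a lax monoidal structure and natural comparison maps $\rmV_\calC(\Sigma_1) \otimes \rmV_\calC(\Sigma_2) \to \rmV_\calC(\Sigma_1 \sqcup \Sigma_2)$. Upgrading these to isomorphisms is where factorizability of $\calC$ is indispensable: it is needed to force the gluing pairing to be nondegenerate enough that the state spaces are finite-dimensional and split as tensor products across connected components.

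For the explicit identification of Proposition~\ref{P:state-space-iso}, I would fix a standardly decorated handlebody $H_{g,n}$ bounding the genus-$g$ surface carrying $n$ positive vertices labeled $V_1, \ldots, V_n$. Every filling cobordism of $\Sigma_{g,n}$ can be reduced, modulo the radical, to such a handlebody colored by a morphism in $\calC(\coend^{\otimes g} \otimes V_1 \otimes \cdots \otimes V_n, \one)$, since in the Lyubashenko formalism each of the $g$ handles contributes one tensor factor of the coend $\coend = \int^{X \in \calC} X^* \otimes X$. This produces a surjection from that Hom space onto $\rmV_\calC(\Sigma_{g,n})$, and pairing against the outside cobordisms realizes the gluing pairing as a pairing between the Hom space and its candidate dual. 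Factorizability makes this pairing perfect, so $\rmV_\calC(\Sigma_{g,n})$ is identified with the linear dual of $\calC(\coend^{\otimes g} \otimes V_1 \otimes \cdots \otimes V_n, \one)$, as claimed.

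The main obstacle will be the nondegeneracy and finite-dimensionality claims underlying the strong monoidality: in the non-semisimple setting the naive universal-construction state spaces can be unwieldy, and it is only the combination of the modified-trace renormalization with factorizability that pins down the clean finite-dimensional answer. Once the state space of a single connected surface is identified, monoidality across components should follow formally from the canonical isomorphism $\coend^{\otimes(g_1+g_2)} \cong \coend^{\otimes g_1} \otimes \coend^{\otimes g_2}$ and the corresponding factorization of the Hom spaces.
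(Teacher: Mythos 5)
Your overall skeleton --- the universal construction applied to the renormalized invariant, a lax monoidal comparison map to be upgraded to an isomorphism, and a handlebody presentation identifying the state space with the dual of $\calC(\coend^{\otimes g}\otimes V_1\otimes\cdots\otimes V_n,\one)$ --- matches the paper. But the step you defer to ``factorizability forces the gluing pairing to be nondegenerate enough that the state spaces split across components'' is precisely the hard content of Theorem \ref{T:monoidality}, and nondegeneracy is not the mechanism: the pairing on the quotient state spaces is nondegenerate by construction, and injectivity of the comparison map $\mu_{\bbSigma,\bbSigma'}$ is the formal, easy half (Proposition \ref{lax_monoidality}). What actually has to be proved is \emph{surjectivity}: a connected cobordism filling $\bbSigma\disjun\bbSigma'$ need not be a disjoint union, and no amount of nondegeneracy or finite-dimensionality splits it for you. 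The paper achieves this by isotoping a separating $2$-sphere so that it is crossed by a single edge with projective colour $V$, decomposing $\id_V=\sum_i f_i'\circ f_i$ through projective covers $P_{V_i}$, invoking the Cutting Lemma \ref{L:cutting} to kill every term with $V_i\neq\one$, and then applying the index~$1$ surgery relation $[\bbB_1]=\calD\cdot[\bbA_1]$ from Proposition \ref{P:surgery_axioms} to split the surviving terms into genuine disjoint unions. None of this is visible in your sketch, and it is where factorizability really enters, through the modularity parameter $\zeta$ in the Cutting Lemma and the surgery coefficients.

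Your closing claim that monoidality ``follows formally'' from $\coend^{\otimes(g_1+g_2)}\cong\coend^{\otimes g_1}\otimes\coend^{\otimes g_2}$ is also not viable: morphism spaces out of a tensor product do not factor, i.e.\ $\calC(A\otimes B,\one)\not\cong\calC(A,\one)\otimes\calC(B,\one)$ in general, and the universal-construction state space of a disconnected surface has no a priori algebraic model, so there is nothing to compare it with. The paper's logical order is the reverse of yours: monoidality is established topologically first, and only afterwards are the state spaces of connected surfaces computed (Proposition \ref{P:state-space-iso}), via a surjection from $\calC(\coend^{\otimes g}\otimes V,\one)$ onto $\rmV'_\calC(\bbSigma_{g,V})$ on the dual side and an injection of the quotient $\rmX_{g,V}$ of $\calC(P_{\one},\eend^{\otimes g}\otimes V)$ into $\rmV_\calC(\bbSigma_{g,V})$. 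Finally, be careful with ``factorizability makes this pairing perfect'': the pairing between $\calC(\coend^{\otimes g}\otimes V,\one)$ and $\calC(P_{\one},\eend^{\otimes g}\otimes V)$ has a nontrivial right radical whenever $\calC$ is non-semisimple; what factorizability (invertibility of the Drinfeld map) combined with nondegeneracy of the modified trace actually gives is vanishing of the \emph{left} radical (Lemma \ref{L:left-non-degeneracy}), which is exactly why the state space is the full linear dual of the Hom space rather than the Hom space itself.
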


\subsection{Previous results}

In order to put our work into perspective, let us recall a brief history of non-semisimple quantum topology. At the beginning of the '90s, Hennings constructed the first family of so-called \textit{non-semisimple} quantum invariants of closed 3-manifolds \cite{H96}. The main algebraic tool used in the process is a finite-di\-men\-sion\-al ribbon Hopf algebra $H$. The construction requires certain algebraic conditions of $H$, namely \textit{unimodularity} and \textit{twist non-degeneracy}, but it does not require semisimplicity, hence the name. When $H$ is semisimple, his results recover the Reshetikhin-Turaev invariant of \cite{T94} associated with the category of finite-dimensional representations $\mods{H}$. In 1994 Lyubashenko generalized Hennings' construction, and simultaneously obtained representations of mapping class groups of surfaces \cite{L94}. In his approach, the Hopf algebra $H$ and its category of representations $\mods{H}$ are replaced with a general finite ribbon category $\calC$. This time, in order to obtain mapping class group representations, the requirement is that $\calC$ should be \textit{factorizable}. This means that the natural Hopf pairing defined on the coend $\coend \in \calC$ has to be non-degenerate, or equivalently that the only transparent objects of $\calC$ have to be direct sums of the tensor unit \cite{S16}. When the category $\calC$ is of the form $\mods{H}$ for some finite-dimensional factorizable ribbon Hopf algebra $H$, then Lyubashenko's invariant recovers Hennings' one. In 2001 Kerler and Lyubashenko constructed 2-functorial extensions of these mapping class group representations, but only for 3-dimensional cobordisms with corners between connected surfaces with boundary \cite{KL01}. Indeed, they found deep obstructions when trying to translate the monoidal structure induced by disjoint union of surfaces and cobordisms into the one induced by tensor product of vector spaces and linear maps. More precisely, if $\calC$ is non-semisimple, then its corresponding Lyubashenko invariant vanishes against all closed 3-manifolds whose first Betti number is strictly positive \cite{O95,K96b}. This means a TQFT extending Lyubashenko's invariant would have to assign 0-dimensional vector spaces to every closed surface, which is contradictory.

These difficulties were recently overcome through the use of so-called \textit{modified traces}, whose theory was developed in \cite{GPT07,GKP10,GKP11,GPV11,BBG18,GKP18}. These techniques were first used in a different, not necessarily finite setting for the construction of certain non-semisimple quantum invariants of closed 3-dimensional manifolds known as \textit{CGP} invariants \cite{CGP12}. These invariants have later been upgraded to $2+1$-dimensional TQFTs, at first only for the so-called \textit{unrolled} version of the quantum group of $\sl_2$ at roots of unity \cite{BCGP14}, and then in general, and even for higher categorical analogues called $1+1+1$-dimensional \textit{Extended TQFTs} (\textit{ETQFTs} for short) \cite{D17}. The main algebraic ingredient for these constructions is provided by \textit{relative modular categories}, which are (not necessarily semisimple) ribbon categories featuring a possibly infinite number of isomorphism classes of simple objects. The resulting quantum invariants and (E)TQFTs are defined for manifolds and cobordisms decorated with 1-dimensional cohomology classes, and symmetric monoidality holds in a graded sense. This theory generalizes the standard approach of Reshetikhin-Turaev in a different direction with respect to the one of Kerler-Lyubasheko, as the intersection between relative modular categories and finite factorizable ribbon categories is limited to semisimple modular categories.

More recently, modified traces were also used to \textit{renormalize} Hennings' construction, at first only for the \textit{restricted} quantum group of $\sl_2$ at roots of unity \cite{BBG17}, and then for general twist non-degenerate finite-dimensional unimodular ribbon Hopf algebras $H$ \cite{DGP17}. The resulting quantum invariants of closed 3-manifolds contain some classical ones, such as Kashaev's invariants of knots \cite{K96} and their generalized versions \cite{M13}. Renormalized Hennings invariants are profoundly different from the original ones, as they extend to fully monoidal TQFTs whenever $H$ is factorizable. They have been shown to coincide with CGP invariants associated with the trivial cohomology class in the case of quantum groups at roots of unity of odd order \cite{DGP18}. However, all these constructions were performed in the special framework of Hopf algebras, and it is natural to wonder whether this restriction is necessary. This work provides the first step towards a categorical formulation of the constructions above.

\subsection{Summary of the construction}\label{S:summary}

In this paper, we use modified traces for arbitrary finite ribbon categories in order to renormalize Lyubashenko's 3-manifold invariants, and to extend them to $2+1$-TQFTs. The construction is divided into three main parts:
\begin{enumerate}
 \item In Section \ref{S:categories}, we recall the algebraic setup needed for our topological constructions. This includes definitions of modified traces and coends, together with their structure and properties, as well as some important consequences of unimodularity and factorizability. 
 \item In Section \ref{S:3-manifold_invariants}, we first define a monoidal functor with target an arbitrary finite unimodular ribbon category $\calC$, and source the category of so-called bichrome graphs. Next, we proceed to use this functor in order to construct closed 3-man\-i\-fold invariants under the additional assumption of twist non-de\-gen\-er\-a\-cy of $\calC$, by suitably combining Lyubashenko's work with the theory of modified traces. Our construction is actually parameterized by modified traces on tensor ideals, meaning every non-zero modified trace $\rmt$ on a tensor ideal $\Tideal \subseteq \calC$ determines a topological invariant.
 \item In Section \ref{S:TQFTs}, we extend these 3-manifold invariants to $2+1$-TQFTs under the additional assumption of factorizability of $\calC$. Our approach requires the use of the tensor ideal $\Proj(\calC)$ of projective objects of $\calC$, which supports a unique non-zero modified trace up to scalar.
\end{enumerate}

Let us quickly outline the construction. We start by considering a finite unimodular ribbon category $\calC$ (see Section \ref{S:unimodularity}). When $\calC$ is twist non-degenerate (see Section \ref{S:non-degeneracy}), it provides the basic ingredient for Lyubashenko's 3-manifold invariant $\rmL_\calC$. The construction crucially exploits the universal property of the coend $\coend \in \calC$, which has the following topological significance: every isotopy class of \textit{$\ell$-bottom tangles} (see Section~\ref{SS:LRT_functor}) determines a \textit{unique} morphism in $\calC$ from $\coend^{\otimes \ell}$ to $\one$.

In the same spirit as in \cite{BBG17,DGP17}, the idea is to add modified traces to our toolbox for the construction. In order to do this, we need to work with a mild generalization of ribbon graphs called \textit{bichrome graphs}, which have edges of two kinds: red and blue. While blue edges are labeled as usual with objects of $\calC$, red edges are unlabeled, and they play a different role in the construction. Indeed, they should be treated as portions of surgery presentations of closed 3-manifolds. This means they should be evaluated using a special morphism $\intL \in \calC(\one,\coend)$ called the \textit{integral} of the coend $\coend \in \calC$. All this is made precise by the construction of the \textit{Lyubashenko-Reshetikhin-Turaev} functor $F_\intL : \calR_\intL \to \calC$ with  source the category of bichrome graphs $\calR_\intL$, to which we devote Section \ref{SS:LRT_functor}. The advantage is that the blue part of closed bichrome graphs can now be used to incorporate modified traces in the construction. Indeed, the standard categorical trace is degenerate in the non-semisimple case, and often vanishes on proper tensor ideals $\Tideal \subset \calC$, thus leading to trivial topological invariants. On the other hand, many of these ideals admit non-degenerate modified traces $\rmt$ (which in general are not unique). For instance, in our setting, $\calI = \Proj(\calC)$ is always a possible choice, in which case $\rmt$ always exists (and is unique up to scalar). However, if we want to use $\rmt$ as a tool for extracting topological information out of a bichrome graph $T$, we need to assume that there exists a blue edge of $T$ labeled by an object of $\calI$. This leads to the definition of \textit{admissible bichrome graphs}, which can be fed to a \textit{renormalized} invariant $F'_{\intL,\rmt}$ obtained by combining the functor $F_\intL$ with the modified trace $\rmt$. Up to rescaling $F'_{\intL,\rmt}$ using so-called \textit{stabilization coefficients}, an operation which requires twist non-degeneracy of $\calC$, we obtain a topological invariant of closed 3-manifolds decorated with admissible bichrome graphs. More precisely, let $M$ be a closed 3-manifold, let $T$ be an admissible bichrome graph embedded into $M$, and let $L$ be a surgery presentation of $M$, which we interpret as a red framed link in $S^3$ with $\ell$ components and signature~$\sigma(L)$. We define the renormalized Lyubashenko invariant as
\[
 \rmL'_{\calC,\calI}(M,T) := \calD^{-1-\ell} \delta^{-\sigma(L)} F'_{\intL,\rmt}(L \cup T).
\]
where the coefficients $\calD$ and $\delta$ are related to stabilization coefficients as explained in Section \ref{SS:3-manifold_invariants}. We show in Theorem \ref{T:admissible_closed_3-manifold_invariant} that this is indeed an invariant of the pair $(M,T)$. When $\calC$ is semisimple its only non-zero ideal is $\calI = \calC$, its only non-zero trace, up to scalar, is $\rmt = \tr_\calC$, and $\rmL'_{\calC,\calI}$ recovers the standard Reshetikhin-Turaev invariant. When $\calC$ is the representation category of a twist non-degenerate finite-dimensional unimodular ribbon Hopf algebra, then $\rmL'_{\calC,\calI}$ generalizes the renormalized Hennings invariant of \cite{DGP17}, whose construction was performed in the special case $\calI = \Proj(\calC)$.

The second construction of this paper requires a framework which is slightly more rigid. Indeed, in order to extend the renormalized Lyubashenko invariant to a TQFT $\rmV_\calC : \adCob_\calC \to \Vect_\Bbbk$, we need to assume $\calC$ is also factorizable\footnote{The assumption is used in Lemma~\ref{L:cutting}, and is crucial for proving monoidality of the TQFT.}, and we also need to pick the ideal $\calI = \Proj(\calC)$ as the domain of our modified trace~$\rmt$. Then, the universal construction of \cite{BHMV95} provides a general procedure for the definition of a functorial extension of $\rmL'_\calC := \smash{\rmL'_{\calC,\Proj(\calC)}}$. Just like in \cite{DGP17}, we work with a category $\adCob_\calC$ of admissible cobordisms. Decorations for objects are essentially provided by embedded sets of blue marked points carrying framings, orientations, and labels, which is what we get when we intersect transversely a surface with a bichrome graph inside a closed 3-manifold, if we avoid all red edges. Consequently, decorations for morphisms are essentially provided by bichrome graphs properly embedded into cobordisms, although not by arbitrary ones. Indeed, we require the presence of a blue edge labeled by a projective object of $\calC$, but only for connected components which are disjoint from the incoming boundary. This means for example that coevaluation morphisms for surfaces without points labeled by projective objects of $\calC$ are not allowed in our construction, while evaluation morphisms are. With this definition in place, we can apply the universal construction of \cite{BHMV95}, and consider for every decorated surface $\bbSigma \in \adCob_\calC$ vector spaces $\calV(\bbSigma)$ and $\calV'(\bbSigma)$ freely generated by cobordisms of the form $\bbM_{\bbSigma} : \varnothing \to \bbSigma$ and of the form $\bbM'_{\bbSigma} : \bbSigma \to \varnothing$, respectively. The state space $\rmV_\calC(\bbSigma)$ of $\bbSigma$ is then defined as the quotient of $\calV(\bbSigma)$ with respect to the radical of the bilinear form $\langle \cdot,\cdot \rangle_{\bbSigma} :  \calV'(\bbSigma) \times \calV(\bbSigma) \to \Bbbk$ determined by
\[
 \langle \bbM'_{\bbSigma},\bbM_{\bbSigma} \rangle_{\bbSigma} := \rmL'_\calC(\bbM'_{\bbSigma} \circ \bbM_{\bbSigma}).
\]
This automatically induces a functor $\rmV_\calC : \adCob_\calC \to \Vect_\Bbbk$. The remaining step is to establish monoidality on objects, the hardest part of the construction, which is done in Theorem \ref{T:monoidality}. Finally, in Section \ref{S:identification_state_spaces} we prove that state spaces obtained from the above quotient construction agree with morphism spaces in $\calC$ as stated in Theorem \ref{T:main}. We show in \cite{DGGPR20} that the mapping class group representations induced by $\rmV_\calC$ on these spaces are isomorphic to the ones introduced by Lyubashenko in \cite{L94}. Furthermore, in the case of the small quantum group of $\sl_2$, we establish an interesting property of our TQFT: the action of any Dehn twist on the state space of a surface with empty decorations has infinite order, something which never happens for Reshetikhin-Turaev TQFTs.

When $\calC = \mods{H}$ for a finite-dimensional factorizable ribbon Hopf algebra $H$, our construction is equivalent to the one of \cite{DGP17}, as proved in \cite[Sec.~2.4~\&~App.~C]{DGGPR20}, although there are some slight differences: our notion of bichrome graph is simpler, and certain proofs are more straightforward due to the universal property of the coend. Another advantage is that our construction allows now $H$ to be a (weak) quasi-Hopf algebra, as long as it is still finite-dimensional, ribbon, and factorizable. It would be particularly interesting to study the case of the quasi-Hopf version of the restricted quantum group of $\sl_2$ at even roots of unity \cite{GR15, CGR17}, of the symplectic fermion quasi-Hopf algebra \cite{FGR17b}, and of the small quasi-quantum group of a general simple Lie algebra $\frakg$ \cite{GLO18, N18}, since the corresponding Hopf versions are not quasi-triangular.

\addtocontents{toc}{\protect\setcounter{tocdepth}{0}}

\subsection*{Conventions and notations}

Throughout this paper, we fix an al\-ge\-bra\-i\-cal\-ly closed field $\Bbbk$. The terms \textit{linear}, \textit{vector space}, and \textit{algebra} will always be used as shorthand for \textit{$\Bbbk$-linear}, \textit{$\Bbbk$-vector space}, and \textit{$\Bbbk$-algebra} respectively. We denote by $\Vect_\Bbbk$ the linear category of vector spaces, and, if $A$ is an algebra, we denote with $\mods{A}$ the linear category of finite-dimensional left $A$-modules.

\addtocontents{toc}{\protect\setcounter{tocdepth}{2}}

\section{Unimodular, twist non-degenerate, and modular categories}\label{S:categories}

In this section we collect definitions and results related to ribbon categories that we will need for our construction. In order to keep notation light, we appeal to a few coherence results. Indeed, thanks to \cite[Thm.~XI.3.1]{M71}, every monoidal category is equivalent, as a monoidal category, to a strict one. Furthermore, thanks to \cite[Thm.~2.2]{NS05}, every pivotal category is equivalent, as a pivotal category, to a strict one. Therefore, throughout the whole section, we make the following assumption:
\begin{quote}
 $\cat$ is a finite ribbon category whose underlying pivotal category is strict\footnote{When $\calC$ is not a strict pivotal or even a strict monoidal category, it is easy to insert coherence isomorphisms in all equations appearing throughout this section.}.
\end{quote}

\subsection{Finite ribbon categories}

Following \cite{EGNO15}, a linear category is {\em finite} if it is equivalent, as a linear category, to $\mods{A}$ for some finite-dimensional algebra $A$. In particular, a finite linear category is abelian. By a {\em finite ribbon category} we mean a finite linear category which is in addition a ribbon category such that the tensor product $\otimes$ is bilinear and the tensor unit $\one$ is simple. Equivalently, in the language of \cite{EGNO15}, $\cat$ is a finite tensor category which is in addition ribbon.

Our conventions for structure morphisms of $\cat$ are as follows. Every object $V$ in $\cat$ has a two-sided dual $V^\ast$, and we denote left and right duality morphisms by
\begin{align*}
 \lev_V &: V^\ast \otimes V \to \one, &
 \lcoev_V &: \one \to V \otimes V^\ast, \\*
 \rev_V &: V \otimes V^\ast \to \one, &
 \rcoev_V &: \one \to V^\ast \otimes V.
\end{align*}
The natural families of isomorphisms defining braiding and twist are denoted by
\[
 \brC_{V,W} : V \otimes W \to W \otimes V, \qquad 
 \theta_V : V \to V.
\]
By definition, $\theta$ is a twist if and only if, for all $V,W \in \cat$, we have
\begin{equation}\label{eq:theta-braiding-prop}
 \theta_{V \otimes W}  = \brC_{W,V} \circ \brC_{V,W} \circ (\theta_V \otimes \theta_W), \qquad
 \theta_{V^*} = (\theta_V)^*.
\end{equation}
These structural morphisms are graphically represented by
\[
 \includegraphics{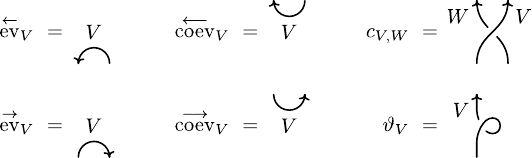}
\] 
Note that we read graphical representations of morphisms from bottom to top, interpreting upward and downward oriented strands as identity morphisms of their labels and of their duals respectively, with tensor product given by juxtaposition.

\subsection{Projective objects and unimodularity}\label{S:unimodularity}

From now on, $\cat$ is a finite ribbon category. We write $\Irr \subset \cat$ for a choice of a set of representatives of isomorphism classes of simple objects of $\cat$, and we assume $\one \in \Irr$. Since $\cat$ is finite, $\Irr$ is a finite set. Projective covers exist in $\cat$, and we denote by $P_V$ the projective cover of $V \in \Irr$. Any object of the form
\begin{equation}\label{E:proj_gen}
 G = \bigoplus_{V \in \Irr} P_V^{\oplus n_V},
\end{equation}
where all $n_V$ are strictly positive, is a projective generator of $\cat$.

We denote with $\Proj(\cat) \subseteq \cat$ the full subcategory of projective objects in $\cat$. Since $\cat$ is rigid, the tensor product $\otimes$ is exact, and for $P \in \Proj(\cat)$ and $V \in \cat$ it follows that $P \otimes V$ is again projective, see \cite[Sec.~4.2]{EGNO15}. Furthermore, direct summands of projective objects are projective.

An important role in our construction is played by the projective cover $P_{\one}$ of the tensor unit $\one$, together with its canonical surjection $\epsilon_{\one} \colon P_{\one} \to \one$. Note that $P_{\one}$ is simple if and only if $P_{\one} \cong \one$, and that in this case it follows that $\Proj(\cat) = \cat$. Thus, $\cat$ is semisimple if and only if $P_{\one}$ is simple.

A finite tensor category is called \textit{unimodular} if $P^*_{\one} \cong P_{\one}$.

\subsection{Tensor ideals and traces}

Here we recall from \cite{GKP10} the notion of a modified trace on a tensor ideal in $\cat$. By defintion, a {\em tensor ideal} $\Tideal \subseteq \calC$ is a full subcategory which is closed under retracts (i.e. taking direct summands) and such that for all $X \in \Tideal$ and $V \in \cat$ we have $X \otimes V \in \Tideal$. Since closure under retracts implies repleteness (i.e. closure under isomorphisms), and since $\cat$ is braided, we automatically have $V \otimes X \in \Tideal$ too.

The left and right partial traces of an endomorphism $f \in \End_\calC(V \otimes W)$ are the endomorphisms $\lptr(f) \in \End_\calC(W)$ and $\rptr(f) \in \End_\calC(V)$ defined as
\[
 \raisebox{-0.5\height}{\includegraphics{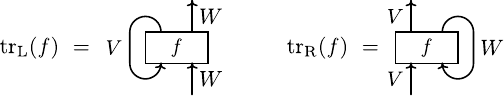}}
\]

A {\em trace $\trI$ on a tensor ideal $\Tideal \subseteq \calC$} is a family of linear maps
\[
 \{ \trI_X : \End_\calC(X) \to \ok \}_{X \in \Tideal}
\]
subject to the following conditions:
\begin{itemize}
	\item[1)] {\em Cyclicity}: For all $X,Y \in \Tideal$ and $f : X \to Y$, $g : Y \to X$ we have 
	 \[
	  \trI_Y(f \circ g) = \trI_X(g \circ f);
	 \]
	\item[2R)] {\em Right partial trace}: For all $X \in \Tideal$, $V \in \cat$ and
	 $h \in \End_\calC(X \otimes V)$, 
	 \[
	  \trI_{X \otimes V}(h) = \trI_X(\rptr(h));
	 \]
	\item[2L)] {\em Left partial trace}: For all $X \in \Tideal$, $V \in \cat$ and $h \in \End_\calC(V \otimes X)$, 
	 \[
	  \trI_{V \otimes X}(h) = \trI_X(\lptr(h)).
	 \]
\end{itemize}
Since $\cat$ is ribbon, conditions 2R) and 2L) above are equivalent \cite{GKP10}.

We say a trace $\trI$ on an ideal $\Tideal \subseteq \calC$ is \textit{non-degenerate} if for every $V \in \Tideal$ and every $W \in \calC$ the pairing $\trI_V( \cdot \circ \cdot ) : \calC(W,V) \times \calC(V,W) \to \Bbbk$ is non-degenerate. An important example of a tensor ideal is the projective ideal $\Proj(\cat)$. It is shown in \cite[Thm.~5.5~\&~Cor.~5.6]{GKP18} that:

\begin{proposition}\label{P:uniqueness_of_trace}
 If $\cat$ is also unimodular, then there exists a unique-up-to-scalar non-zero trace $\trI$ on $\Proj(\cat)$, and furthermore $\trI$ is non-de\-gen\-er\-ate.
\end{proposition}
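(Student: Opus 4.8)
Here is how I would approach the statement.

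The plan is to reduce everything to a question about the single finite-dimensional algebra $A := \End_\calC(P_\one)$ and then to pin down the trace among the symmetric functionals on $A$ using the tensor structure. First I would record that $\Proj(\cat)$ is the tensor ideal \emph{generated} by $P_\one$: for any projective $P$ the morphism $\epsilon_\one \otimes \id_P : P_\one \otimes P \to P$ is a surjection out of a projective object (recall $P_\one \otimes P$ is projective because $\otimes$ is exact and preserves projectives), hence it splits, so $P$ is a retract of $P_\one \otimes P$. Consequently, if a trace $\trI$ exists, its component $\trI_P$ on any projective $P$ is forced by the value $\lambda := \trI_{P_\one} : A \to \kk$ through the reduction $\trI_{P_\one \otimes P}(h) = \trI_{P_\one}(\rptr(h))$ of axiom 2R) followed by cyclicity on the splitting idempotent. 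Cyclicity also forces $\lambda$ to be \emph{symmetric}, $\lambda(fg)=\lambda(gf)$. Since $\cat$ is ribbon, axioms 2R) and 2L) are interchangeable, so it suffices throughout to build and control the right partial trace.

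For existence I would produce the distinguished functional from unimodularity. Unimodularity provides a self-duality $P_\one \cong P_\one^\ast$; combined with the left and right duality morphisms of $\cat$ this furnishes a canonical-up-to-scalar nonzero symmetric functional $\lambda$ on $A$ — concretely the \emph{symmetrised integral} of \cite{BBG18}. The nontrivial point is that $\lambda$ is \emph{not} an arbitrary symmetric Frobenius form: I must verify that the induced family $\trI_P := \trI_{P_\one}(\rptr(\cdot))$ is well defined on retracts and satisfies cyclicity on every projective $P$. I would check this by a direct graphical computation, reducing $\trI_{P_\one \otimes V}$ to $\trI_{P_\one}$ and using the compatibility between the chosen isomorphism $P_\one \cong P_\one^\ast$ and the duality morphisms of $\cat$. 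It is precisely here that unimodularity is indispensable: for a non-unimodular category the analogous form yields only a \emph{one-sided} trace, twisted by the distinguished invertible object.

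Uniqueness up to a single global scalar is, in my view, the main obstacle. The subtlety is that on a symmetric Frobenius algebra the space of symmetric Frobenius forms is typically much larger than one-dimensional, so symmetry and non-degeneracy alone are far too weak; all the rigidity must come from the partial trace axioms, which interlock the values of $\trI$ on projectives in different blocks through the tensor product. The mechanism I would exploit is that $P_\one$ is indecomposable, so $A$ is local, and — using $P_\one \cong I_\one$ under unimodularity — has one-dimensional socle. Given two nonzero traces I would compare the functionals they induce on $A$ and propagate the resulting scalar from the unit block to all of $\Proj(\cat)$ via the surjections $\epsilon_\one \otimes \id_V$ together with cyclicity and 2R); the self-consistency linear system cut out by the partial trace axiom is what I expect to force the solution space to be exactly one-dimensional, and the hardest step is showing this system has no extra solutions.

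Finally, non-degeneracy should be inherited from the construction. For $V \in \Proj(\cat)$ and $W \in \cat$ I would rewrite the pairing $\trI_V(\,\cdot \circ \cdot\,) : \cat(W,V) \times \cat(V,W) \to \kk$ in terms of the symmetric form $\lambda$ on $A$, after replacing $V$ by $P_\one$ and observing that $\cat(W,-)$ and $\cat(-,W)$ send retracts of $P_\one \otimes (-)$ to mutually dual $A$-modules. Non-degeneracy of $\lambda$ — that is, the perfectness of the integral pairing underlying the symmetrised integral — then yields, for every nonzero $g \in \cat(V,W)$, a morphism $f$ with $\trI_V(f \circ g) \neq 0$. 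In summary, the whole argument hinges on the interplay, governed by unimodularity via the self-duality $P_\one \cong P_\one^\ast$, between the tensor product and the duality of $\cat$: it simultaneously upgrades the candidate functional to a genuine two-sided trace, rigidifies it to a single scalar across all blocks, and secures non-degeneracy.
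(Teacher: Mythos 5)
First, note that the paper does not prove this proposition itself: it is quoted from Theorem 5.5 and Corollary 5.6 of \cite{GKP18}, so your attempt has to be measured against that reference rather than against an argument in the text. Your overall architecture does match the one used there: $\Proj(\cat)$ is the tensor ideal generated by $P_{\one}$ (every projective $P$ is a retract of $P_{\one}\otimes P$), hence by cyclicity and the partial trace axiom any trace is determined by the single linear functional $\trI_{P_{\one}}$ on $\End_\calC(P_{\one})$, and conversely one tries to extend a suitable functional. This reduction is correct and is the right first move.

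The genuine gap is in uniqueness, and you flag it yourself: ``the self-consistency linear system \dots is what I expect to force the solution space to be exactly one-dimensional, and the hardest step is showing this system has no extra solutions.'' That hardest step \emph{is} the content of the proposition, and neither locality of $A=\End_\calC(P_{\one})$ nor the one-dimensionality of the socle of $P_{\one}$ delivers it on its own. What is missing is the precise characterization of which functionals $\lambda$ on $\End_\calC(P_{\one})$ (equivalently, on $\End_\calC(G)$ for a projective generator $G$) actually extend to a trace on the whole ideal: the necessary and sufficient condition is the ambidexterity identity $\lambda(\lptr(h))=\lambda(\rptr(h))$ for all $h\in\End_\calC(P_{\one}\otimes P_{\one})$ (this is the key lemma-level input, going back to \cite{GKP11}), and the decisive computation of \cite{GKP18} identifies the space of solutions of this condition with a Hom-space which, in the unimodular case, is $\calC(P_{\one},\one)\cong\kk$ --- one-dimensional because $\one$ is simple with projective cover $P_{\one}$. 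That single identification simultaneously gives existence (the space is nonzero), uniqueness up to scalar (it is one-dimensional), and, via the resulting explicit form of $\trI_{P_{\one}}$ supported on the one-dimensional ideal spanned by $\eta_{\one}\circ\epsilon_{\one}$, non-degeneracy. Your existence step (``I would check this by a direct graphical computation'') and your non-degeneracy step are likewise only plans; they are plausible and pointed in the right direction, but as written the proposal is a strategy with the central verification left open rather than a proof.
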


\subsection{Coends and ends}\label{S:coend}

We will now recall some well-known facts about the end of the functor $\calC \times \calC^{\op}\to \calC$ sending every $(U,V) \in \calC \times \calC^{\op}$ to $U \otimes V^* \in \calC$ and about the coend of the functor $\calC^{\op} \times \calC\to \calC$ sending every $(U,V) \in \calC$ to $U^* \otimes V \in \calC$. We use the notation
\begin{align*}
 &\eend := \int_{X\in\calC} X\otimes X^*, & &\coend := \int^{X\in\calC} X^*\otimes X, \\
 &j_X \colon \eend \to X\otimes X^*, & &i_X \colon X^*\otimes X \to \coend,
\end{align*}
for the end and the coend respectively, and for their corresponding dinatural transformations. See \cite[Sec.~IX.4--IX.6]{M71} for a definition of dinatural transformations, ends, and coends, and see \cite[Sec.~4]{FS10} or \cite[Sec.~3]{FGR17a} for the specific coend $\coend$. In \cite{L94}, $\coend$ was used as a key ingredient for the construction of representations of mapping class groups of surfaces in certain morphism spaces of $\calC$ that will appear in Section \ref{S:pairing} too.

The coend $\coend$ carries the structure of a Hopf algebra in $\cat$ \cite{M93,L95} (the same holds for the end $\eend$, but we will not need it). For our conventions on braided Hopf algebras, as well as for a review of the construction of the Hopf algebra structure on $\coend$, we refer to \cite{FGR17a}. Our notation for structure morphisms of $\coend$ is
\begin{align*}
\text{(Product)} \quad & \multL : \coend \otimes \coend \to \coend, &
\text{(Unit)} \quad & \unitL : \one \to \coend, \\*
\text{(Coproduct)} \quad & \copL : \coend \to \coend \otimes \coend, &
\text{(Counit)} \quad & \counitL : \coend \to \one, \\*
\text{(Antipode)} \quad & \antipL : \coend \to \coend.  
\end{align*}
All these maps are determined by the universal property of $\coend$. Indeed, they are uniquely defined by 
\begin{align}
 \raisebox{-0.5\height}{\includegraphics{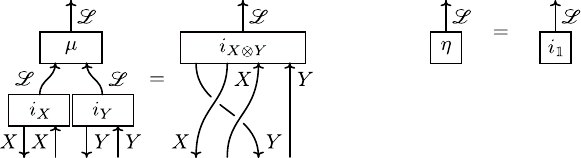}} \label{E:algebra_structure} \\*[0.5\baselineskip]
 \raisebox{-0.5\height}{\includegraphics{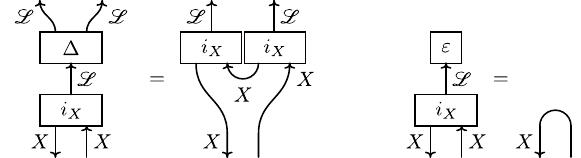}} \label{E:coalgebra_structure} \\*[0.5\baselineskip]
 \raisebox{-0.5\height}{\includegraphics{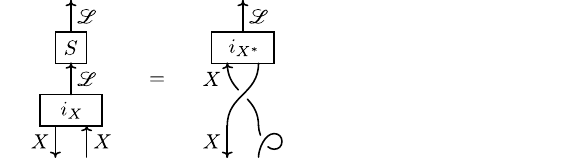}} \label{E:antipode}
\end{align}
Note that when $\calC$ is not a strict pivotal category, canonical isomorphisms $Y^* \otimes X^* \cong (X \otimes Y)^*$ for the product, $\one \cong\one^* \otimes \one$ for the unit, and $X \cong X^{**}$ for the antipode are needed. See \cite[Sec.~3.3]{FGR17a} for more details.

The coend is equipped with a Hopf pairing $\pairL : \coend \otimes \coend \to \one$ that will be important below, which is uniquely defined via the universal property of $\coend$ by
\begin{equation}\label{E:Hopf_pairing}
 \raisebox{-0.5\height}{\includegraphics{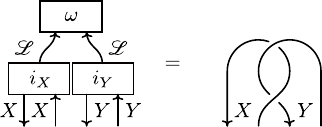}}
\end{equation}
Recall that $\pairL$ being a Hopf pairing means in particular it satisfies
\begin{align}
 \pairL \circ (\id_\coend \otimes \multL) &= \pairL \circ (\id_\coend \otimes \pairL \otimes \id_\coend) \circ (\copL \otimes  \id_{\coend \otimes \coend}), \label{E:Hopf_1} \\*
 \pairL \circ (\antipL \otimes \id_\coend) &= \pairL \circ (\id_\coend \otimes \antipL), \label{E:Hopf_2} 
\end{align}
as shown in \cite[Thm.~3.7]{L95} and in \cite[Eq.~(5.2.8)]{KL01}. The pairing $\tilde{\pairL} : \coend \otimes \coend \to \one$ given by $\pairL \circ (\antipL \otimes \id_\coend) = \pairL \circ (\id_\coend \otimes \antipL)$ satisfies
\begin{equation}\label{E:mirrored_pairing}
 \raisebox{-0.5\height}{\includegraphics{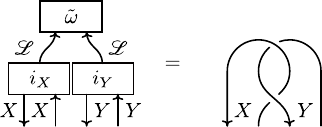}}
\end{equation}
with braidings replaced by inverse ones. Moreover, \cite[Lem.~5.2.4]{KL01} gives
\begin{equation}
 \pairL \circ c_{\coend,\coend}^{-1} = \pairL \circ (\antipL \otimes \antipL). \label{E:Hopf_3}
\end{equation}

\subsection{Integrals and cointegrals}

Let us assume that $\cat$ is in addition unimodular. A morphism $\intL \in \calC(\one,\coend)$ is called a \textit{right integral of $\coend$} if it satisfies
\begin{equation}\label{eq:right-integral-def}
 \multL \circ (\intL \otimes \id_\coend)=  \intL \circ \counitL.
\end{equation}
A left integral of $\coend$ is defined similarly\footnote{In the non-unimodular case, a right/left integral would be a map out of the distinguished invertible object, not out of the tensor unit, see e.g.~\cite{KL01, BGR20}}. It is known that right/left integrals of $\coend$ exist and are unique up to scalar, see \cite[Prop.~4.2.4]{KL01}. Furthermore, as we are in the unimodular case, each left integral is also a right integral and vice versa, see \cite[Thm.~6.9]{S14}. In other words, integrals are two-sided.

Dually, we have left and right cointegrals of $\coend$. A morphism $\cointL \in \calC(\coend,\one)$ is called a \textit{right cointegral of $\coend$} if it satisfies
\begin{equation}
 (\cointL \otimes \id_\coend) \circ \copL= \unitL \circ  \cointL,
\end{equation}
and similarly for the left version. Just like for integrals, since we are in the unimodular case, we have the following.

\begin{lemma}
 Cointegrals of $\coend$ are two-sided.
\end{lemma}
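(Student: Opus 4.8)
The plan is to mirror the argument for integrals. First I would record that right cointegrals of $\coend$ exist and are unique up to scalar, and likewise for left cointegrals. This is the statement dual to Proposition 4.2.4 of \cite{KL01}: it is obtained by transporting that result along the standard duality between $\coend$ and the end $\eend = \int_{X} X \otimes X^*$, under which the coalgebra structure of $\coend$ corresponds to the algebra structure of $\eend$ and integrals are interchanged with cointegrals. It therefore suffices to show that a fixed nonzero right cointegral $\cointL$ also satisfies the left cointegral identity $(\id_\coend \otimes \cointL) \circ \copL = \unitL \circ \cointL$, or, more invariantly, that the line of right cointegrals and the line of left cointegrals coincide inside $\calC(\coend,\one)$.

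Second, I would bring in the antipode $\antipL$, which is invertible since $\coend$ is a Hopf algebra in the rigid braided category $\calC$. Using that $\antipL$ is an anti-morphism of coalgebras, together with naturality of the braiding $\brC$ (so that $\antipL \otimes \antipL$ commutes with $\brC_{\coend,\coend}^{\pm 1}$), a direct diagrammatic computation shows that $\cointL \circ \antipL$ satisfies the left cointegral identity whenever $\cointL$ satisfies the right one; this is the categorical form of the classical fact that the antipode interchanges left and right cointegrals. Consequently the map $\cointL \mapsto \cointL \circ \antipL$ identifies the line of right cointegrals with the line of left cointegrals, and two-sidedness is precisely the assertion that these two lines agree.

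Third, and this is where I expect the genuine difficulty, I would show that the invertible datum measuring the discrepancy between left and right cointegrals—the distinguished grouplike of $\coend$—is trivial exactly because $\calC$ is unimodular. The cleanest route is to identify this grouplike, up to duality, with the distinguished invertible object $D$ of $\calC$ governing non-unimodularity, and then to invoke $D \cong \one$. This is the dual of the identification used to prove that integrals are two-sided in Theorem 6.9 of \cite{S14}: there the modular function of $\coend$ is pinned to $D$, here the distinguished grouplike is, and unimodularity collapses both simultaneously. The main obstacle is thus the bookkeeping of this identification through the universal property of $\coend$—unwinding the dinatural transformations $i_X \colon X^* \otimes X \to \coend$ so as to express the discrepancy in terms of $D$—rather than any new structural input, and once it is in place the conclusion is immediate. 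As an alternative that sidesteps the grouplike formalism, one could use the two-sided integral $\intL$ directly: it realizes $\coend$ as a Frobenius algebra whose Frobenius form is the cointegral, and the symmetry of this form in the unimodular case yields two-sidedness without the need to track $D$ explicitly.
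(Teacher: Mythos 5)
Your overall strategy (dualize the classical Hopf-algebra argument: antipode interchanges left and right cointegrals, then kill the distinguished grouplike using unimodularity) is a recognizable and in principle viable route, but as written it has a genuine gap precisely at the step you flag as "bookkeeping". Knowing that $\cointL \mapsto \cointL \circ \antipL$ carries the line of right cointegrals onto the line of left cointegrals does not by itself give two-sidedness; everything hinges on your third step, namely that the comodulus of $\coend$ is trivial. You assert that this comodulus "is, up to duality, the distinguished invertible object $D$" and that unimodularity collapses it, but this identification is itself a nontrivial theorem-sized claim about the coend that you neither prove nor cite in a usable form (Theorem 6.9 of \cite{S14} concerns integrals and the modular function, and transporting it to the comodulus side is exactly the content you would need to supply). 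The Frobenius-algebra alternative you mention has the same problem: the "symmetry of the Frobenius form in the unimodular case" is again the statement to be proved, not an input. So the proposal reduces the lemma to an unproven assertion rather than establishing it.

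The paper's proof shows that all of this machinery is unnecessary. It establishes the much stronger statement that for \emph{every} morphism $f : \coend \to \one$ one has
\begin{equation*}
 (\id_\coend \otimes f) \circ \copL = (f \otimes \id_\coend) \circ \copL,
\end{equation*}
with no reference to cointegrals, antipodes, grouplikes, or unimodularity. The argument is two lines: precompose both sides with the dinatural component $i_X$, substitute the defining formula for $\copL \circ i_X$ from Equation \eqref{E:coalgebra_structure}, and observe that the two resulting endomorphisms of $X^* \otimes X$ being inserted under $i_X$ are dual to one another, so dinaturality of $i$ identifies the two composites. Applied to a right cointegral $f = \cointL$, this immediately says $\cointL$ is also a left cointegral. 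In particular the comodulus you were trying to compute is automatically $\unitL$ once the cointegral exists as a morphism $\coend \to \one$; the real role of unimodularity is only to guarantee that existence. I would encourage you to look for arguments that exploit the universal property of $\coend$ directly before reaching for the general Hopf-algebraic apparatus.
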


\begin{proof}
 The lemma is an immediate consequence of the following more general claim: for any morphism $f : \coend \to \one$ we have
 \begin{equation}\label{eq:coint-twosided-aux1}
  (\id_\coend \otimes f) \circ \copL = (f \otimes \id_\coend) \circ \copL.
 \end{equation}	
 To show this identity, compose both sides with $i_X$ from the right, and substitute the defining property of the coproduct $\copL$ given by Equation \eqref{E:coalgebra_structure}. Now Equation \eqref{eq:coint-twosided-aux1} is equivalent to 
 \[
  \includegraphics{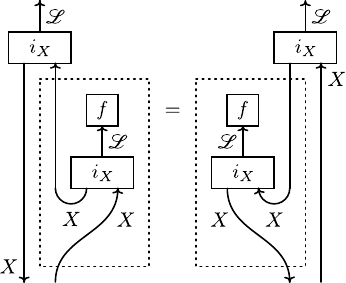}
 \]
 Then, it is enough to notice that the endomorphisms in the dashed boxes are dual to each other, so that the above equality is a consequence of dinaturality.
\end{proof}

Since both integrals and cointegrals for $\coend$ are two-sided in the unimodular case, we can drop the prefix left/right. We will need the following technical result.

\begin{lemma}\label{L:identities}
 Let $\intL$ and $\cointL$ be a non-zero integral and a non-zero cointegral of $\coend$ respectively.
 \begin{enumerate}
  \item $\cointL \circ \intL \neq 0$;
  \item $\antipL \circ \intL = \intL$;
  \item $(\antipL \otimes \id_\coend) \circ \copL \circ \intL = (\id_\coend \otimes \antipL) \circ \copL \circ \intL$.
 \end{enumerate}
\end{lemma}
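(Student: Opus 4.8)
The plan is to isolate part (\textit{i}) as the genuine content and to deduce (\textit{ii}) and (\textit{iii}) from it together with the universal property of $\coend$ and the defining relations of the antipode and the integral. For (\textit{i}), I would show that $\coend$ is a Frobenius algebra with Frobenius form the cointegral $\cointL$; equivalently, that the pairing $\cointL \circ \multL \colon \coend \otimes \coend \to \one$ is non-degenerate, i.e. that the morphism $\Phi \colon \coend \to \coend^*$ that is mate to it is an isomorphism. This non-degeneracy is the categorical form of the Larson--Sweedler theorem for the Hopf algebra $\coend$, and it is the main obstacle: I would prove that $\Phi$ is a monomorphism, hence an isomorphism since $\coend$ and $\coend^*$ have equal length, by constructing an explicit inverse out of $\intL$, $\antipL$, and $\copL$ via the universal property of the coend, mirroring the classical Fourier-transform argument. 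Granting this, (\textit{i}) is immediate: by the two-sided integral property \eqref{eq:right-integral-def}, the morphism $\one \xrightarrow{\intL} \coend \xrightarrow{\Phi} \coend^*$ is mate to $\cointL \circ \multL \circ (\intL \otimes \id_\coend) = \cointL \circ \intL \circ \counitL = (\cointL \circ \intL)\, \counitL$, so if $\cointL \circ \intL$ vanished we would get $\Phi \circ \intL = 0$ with $\intL \neq 0$, contradicting injectivity of $\Phi$ (here $\counitL \neq 0$ because $\counitL \circ \unitL = \id_\one$).

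For (\textit{ii}), I would first check that $\antipL \circ \intL$ is again an integral. Since $\antipL$ is a braided anti-homomorphism satisfying $\counitL \circ \antipL = \counitL$ and $\antipL \circ \unitL = \unitL$, applying $\antipL$ to the defining identity of $\intL$ converts the right-integral equation into the left-integral one; as integrals are two-sided in the unimodular case, $\antipL \circ \intL$ is an integral, so $\antipL \circ \intL = c\, \intL$ for a unique scalar $c$. The same argument applied to the cointegral gives $\cointL \circ \antipL = c'\, \cointL$, and composing the two relations yields $c'(\cointL \circ \intL) = \cointL \circ \antipL \circ \intL = c(\cointL \circ \intL)$, whence $c = c'$ by part (\textit{i}). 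It remains to pin this common scalar to $1$, which is the delicate point of (\textit{ii}): applying $\antipL$ twice gives $\antipL^2 \circ \intL = c^2 \intL$, and since the distinguished grouplike element is trivial in the unimodular setting $\antipL^2$ fixes the integral, forcing $c^2 = 1$; I would exclude the value $c = -1$ by a direct evaluation against the dinatural morphisms $i_X$, in the spirit of the preceding lemma.

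Finally, (\textit{iii}) is the most routine step. With (\textit{ii}) in hand, I would expand $\copL \circ \intL$ and use that $\antipL$ is a braided anti-coalgebra morphism, $\copL \circ \antipL = \brC_{\coend,\coend} \circ (\antipL \otimes \antipL) \circ \copL$, together with the antipode axioms \eqref{E:antipode} and the two-sided integral property, to match the two sides of the claimed identity $(\antipL \otimes \id_\coend) \circ \copL \circ \intL = (\id_\coend \otimes \antipL) \circ \copL \circ \intL$. Concretely this is a Sweedler-type manipulation that I would carry out in the graphical calculus, precomposing with the $i_X$ and reducing everything to dinaturality, exactly as in the proof of the lemma preceding the statement. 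I expect no new difficulty here beyond careful bookkeeping of the braidings, so the entire weight of the argument rests on the Frobenius non-degeneracy underlying part (\textit{i}).
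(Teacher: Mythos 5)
Your treatment of part (\textit{i}) is sound and close in spirit to the paper's: the non-degeneracy of $\cointL \circ \multL$ is exactly what the paper imports from Kerler--Lyubashenko (Theorem 4.2.5 and Corollary 4.2.13 of [KL01]), and your deduction of $\cointL \circ \intL \neq 0$ from it via the integral property is correct. The problems are in parts (\textit{ii}) and (\textit{iii}). For (\textit{ii}) you correctly reduce to $\antipL \circ \intL = c\,\intL$ and show $c = c'$, but you never actually determine $c$. The claim that $\antipL^2$ fixes the integral ``since the distinguished grouplike is trivial'' is unjustified here: Radford-type formulas relate the distinguished grouplikes to $\antipL^4$, not $\antipL^2$, and in the braided setting $\antipL^2$ of the coend is governed by the ribbon structure, so this step needs a proof you have not supplied. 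Worse, even granting $c^2 = 1$, the proposed exclusion of $c = -1$ ``by direct evaluation against the dinatural morphisms $i_X$'' cannot work: the universal property of $\coend$ controls morphisms \emph{out of} $\coend$, whereas $\intL$, $\antipL\circ\intL$ and $\copL\circ\intL$ are morphisms \emph{into} $\coend$ (or $\coend\otimes\coend$), so precomposition with $i_X$ tells you nothing about them. The paper instead obtains (\textit{ii}) from the braided Fourier-transform identity (Lemma 3.13 of [DR12], the categorical form of Radford's $S(\Lambda)=\Lambda_{(1)}\lambda(a\Lambda_{(2)})$-type formula), composed with $\id_\coend\otimes\cointL$ and $\intL$ --- which is precisely the inverse of your Frobenius map $\Phi$ that you construct for part (\textit{i}). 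You have the right tool in hand and simply do not use it.

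Part (\textit{iii}) is likewise not ``routine bookkeeping'': the identity $(\antipL \otimes \id_\coend) \circ \copL \circ \intL = (\id_\coend \otimes \antipL) \circ \copL \circ \intL$ is not a formal consequence of (\textit{ii}) together with the anti-(co)algebra property $\copL \circ \antipL = c_{\coend,\coend}\circ(\antipL\otimes\antipL)\circ\copL$; chasing those relations only yields the twisted variants of the statement. The paper has to establish a \emph{dual} version of the Fourier-transform identity (Equation \eqref{E:sec2pic2}), compose with $\intL\otimes\id_\coend$, and invoke the non-degeneracy of $\cointL\circ\multL$ a second time to cancel. So the correct architecture is: prove the Frobenius non-degeneracy \emph{via} the two Fourier-transform identities (primal and dual), and then read off (\textit{i}), (\textit{ii}) and (\textit{iii}) from those identities by composing with $\intL$ and $\cointL$; isolating (\textit{i}) as the only real content and treating (\textit{ii}), (\textit{iii}) as formal consequences does not go through.
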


\begin{proof}
 Part (\textit{i}) is proven in \cite[Thm.~4.2.5]{KL01} (since we assume unimodularity, the object of integrals is the tensor unit). Part (\textit{ii}) follows by composing Equation~(a) of \cite[Lem.~3.13]{DR12} with $\id_\coend \otimes \cointL$ from the left and with $\intL$ from the right (this is just a braided version of the argument showing \cite[Eq.~(2)]{R94}). To see part (\textit{iii}), first establish
 \begin{equation}\label{E:sec2pic2}
  \raisebox{-0.5\height}{\includegraphics{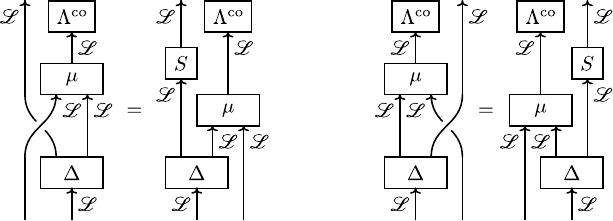}} 
 \end{equation}  
 which is a dual version of \cite[Lem.~3.13]{DR12}. Then, compose both equalities with $\intL \otimes \id_\coend$ from the right, compose the second one with $\antipL^{-1}$ from the left, and use the non-degeneracy of the pairing 
$\cointL \circ \multL : \coend \otimes \coend \to \one$, which follows from \cite[Cor.~4.2.13]{KL01}, together with with part (\textit{ii}) and with the identity
 \begin{equation}\label{E:coprod}
  \copL \circ \antipL = c_{\coend,\coend} \circ (\antipL \otimes \antipL) \circ \copL \qedhere
 \end{equation}
\end{proof}

For later use, we note that parts (\textit{ii}) and (\textit{iii}) of Lemma \ref{L:identities} imply
\begin{equation}\label{eq:brHopf-identity}
 (\antipL \otimes \antipL^{-1}) \circ \Delta \circ \intL 
 = \Delta \circ \intL 
 = \Delta \circ \antipL^{-1} \circ \intL.
\end{equation}

\subsection{Twist non-degeneracy}\label{S:non-degeneracy}

We introduce the \textit{$T$-transformation} $\modT : \coend \to \coend$ on the coend as the unique morphism satisfying, for all $V \in \cat$, the identity
\begin{equation}
 \modT \circ i_V = i_V \circ (\id_{V^*} \otimes \theta_V).
\end{equation}

\begin{definition}\label{D:twist_non-degenerate}
A unimodular finite ribbon category is {\em twist non-de\-gen\-er\-ate} if there exist non-zero constants $\Delta_\pm$ depending on the normalization of $\intL$ such that
\begin{equation}\label{eq:twist-nondeg}
 \counitL \circ \modT^{\pm1} \circ \intL = \Delta_{\pm} \id_{\one}.
\end{equation}
\end{definition}

\subsection{Modularity}\label{S:modularity}

A braided finite tensor category is called \textit{factorizable} if the Hopf pairing
$\pairL : \coend \otimes \coend \to \one$ defined by Equation \eqref{E:Hopf_pairing} is non-degenerate, meaning that $(\omega \otimes \id_{\coend^*}) \circ (\id_\coend \otimes \lcoev_\coend)$ gives an isomorphism between $\coend$ and $\coend^\ast$. Equivalently, a braided finite tensor category is factorizable if and only if all transparent objects are isomorphic to direct sums of the tensor unit \cite{S16}.

\begin{definition}\label{D:modular_cat}
 A \textit{modular category} is a finite ribbon category which is factorizable.
\end{definition}

We stress that a modular category in the above sense need not be semisimple.

\begin{proposition}\label{P:if_mod_then_non-deg_unimod}
A modular category is unimodular and twist non-de\-gen\-er\-ate.
\end{proposition}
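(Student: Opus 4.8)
The plan is to treat the two assertions separately, since they rely on different features of factorizability. For unimodularity I would argue independently of integrals: the very definition of $\intL$ as a morphism out of $\one$ presupposes $P_\one^* \cong P_\one$, so this claim cannot use the integral machinery of the preceding subsections. The natural input is Shimizu's analysis of non-degeneracy conditions \cite{S16}: factorizability says the Hopf pairing $\pairL$ induces an isomorphism $\coend \to \coend^*$, and this is precisely the hypothesis under which the distinguished invertible object of $\calC$ is shown to be trivial, i.e.\ $\calC$ is unimodular. I would therefore invoke \cite{S16} directly, recalling heuristically that non-unimodularity is measured by a grouplike correction term in the Radford-type $\antipL^4$-formula for $\coend$, and that non-degeneracy of $\pairL$ trivializes it.

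Once unimodularity is available, $\intL \in \calC(\one,\coend)$ is defined and two-sided, so Definition \ref{D:twist_non-degenerate} is meaningful. Since $\one$ is simple, $\calC(\one,\one) = \kk\,\id_\one$, hence $\counitL \circ \modT^{\pm1} \circ \intL = \Delta_\pm \id_\one$ holds automatically for unique scalars $\Delta_\pm$; the entire content of twist non-degeneracy is that $\Delta_+$ and $\Delta_-$ are non-zero. Two invertibilities drive the argument. First, the twist is an isomorphism, so the $T$-transformation $\modT$ is an automorphism of $\coend$, with inverse induced by $\theta_V^{-1}$. Second, factorizability enters through the $S$-transformation $\modS : \coend \to \coend$ built from $\pairL$, $\copL$ and $\counitL$: non-degeneracy of $\pairL$ is exactly what makes $\modS$ invertible (see \cite{L95,KL01}).

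The key step is then to feed these invertibilities into Lyubashenko's $SL(2,\Z)$-type relations among $\modS$, $\modT$ and $\intL$, in which $\Delta_\pm$ occur as the scalars linking invertible composites — concretely a relation of the form $(\modS\,\modT)^3 = \Delta_+\,\modS^2$, together with its mirror obtained by replacing the braiding with its inverse (which produces $\modT^{-1}$ and $\Delta_-$). As $\modS$ and $\modT$ are invertible, both $\modS^2$ and $(\modS\,\modT)^3$ are automorphisms of $\coend$, so the relation forces $\Delta_+ \neq 0$, and the mirrored relation gives $\Delta_- \neq 0$. I expect the main obstacle to be normalization bookkeeping: matching the precise constant appearing in the relation of \cite{L95,KL01} with the scalar $\Delta_\pm$ as normalized in \eqref{eq:twist-nondeg}, and confirming the exact form of $\modS$ for which factorizability yields invertibility. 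A possibly shorter alternative would be to establish the single identity $\modS \circ \intL = \Delta_+\Delta_-\,\unitL$; since $\modS$ is injective and $\intL \neq 0$, this immediately yields $\Delta_+\Delta_- \neq 0$, hence both scalars are non-zero.
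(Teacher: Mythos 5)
Your proposal is correct in outline but takes a genuinely different route from the paper for the twist non-degeneracy part. For unimodularity both you and the paper simply cite the literature (the paper uses Lemma 5.2.8 of \cite{KL01}, you use \cite{S16}; both are valid sources), and your ordering — unimodularity first, so that $\intL \in \calC(\one,\coend)$ makes sense — matches the paper's. For $\Delta_\pm \neq 0$, however, the paper does not pass through the $SL(2,\Z)$-relations at all. It defines the pairing $p := \counitL \circ \modT^{-1} \circ \multL$ and checks on dinatural components that $p = \pairL \circ (\modT^{-1} \otimes \modT^{-1})$ up to precomposition with $i_V \otimes i_W$; since $\pairL$ is non-degenerate by factorizability and $\modT$ is invertible, $p$ is non-degenerate, hence $p \circ (\intL \otimes \id_\coend) \neq 0$, and the right-integral property \eqref{eq:right-integral-def} collapses this morphism to $\Delta_- \counitL$, giving $\Delta_- \neq 0$ directly (and symmetrically for $\Delta_+$). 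This is lighter than your argument: it needs only the Hopf pairing and the integral axiom, whereas your route requires first establishing the relation $(\modS\modT)^3 = \Delta_+\modS^2$ together with the invertibility of $\modS$, and — as you yourself flag — one must verify that the proof of that relation in \cite{L95,KL01} does not already presuppose $\Delta_+ \neq 0$ and that its scalar matches the normalization of \eqref{eq:twist-nondeg}. Your ``shorter alternative'' $\modS \circ \intL = \Delta_+\Delta_-\,\unitL$ is not actually shorter: the identity that comes for free from non-degeneracy is $\pairL \circ (\intL \otimes \id_\coend) = \zeta\cointL$ with $\zeta$ the modularity parameter (Lemma \ref{lem:coint-from-int}), and the equality $\zeta = \Delta_+\Delta_-$ is a separate, non-trivial statement which the paper only proves much later (Corollary \ref{C:non-degeneracy}) using the graphical calculus; assuming it here would hide the real work. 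So your skeleton is sound, but the paper's argument is the more economical and self-contained one.
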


\begin{proof}
Unimodularity is proved in \cite[Lem.~5.2.8]{KL01}. Twist non-de\-gen\-er\-a\-cy follows form the proofs of \cite[Lem.~4.2.11~\&~5.2.8]{KL01}, but we sketch the argument here for the convenience of the reader.

Define $p : \coend \otimes \coend \to \one$ as $p := \counitL \circ \modT^{-1} \circ \multL$. Substituting the
definition of $\multL$, $\modT$, and $\pairL$, one verifies that for all $V,W \in \cat$,
\[
 p \circ (i_V \otimes i_W) = \pairL \circ (\modT^{-1} \otimes \modT^{-1}) \circ (i_V \otimes i_W).
\]
This shows that $p$ is a non-de\-gen\-er\-ate pairing. In particular, $p \circ (\intL \otimes \id_\coend)$ is a non-zero morphism.  Substituting the definition of $p$ and using Equation \eqref{eq:right-integral-def} we find 
$p \circ (\intL \otimes \id_\coend) = \Delta_- \counitL$. This means $\Delta_- \neq 0$. The proof that $\Delta_+ \neq 0$ is analogous.
\end{proof}

For modular categories, we will later use (for the so-called \textit{Cutting Lemma}~\ref{L:cutting}) the following two properties of the cointegral $\cointL$. The first statement is proved in \cite[Thm.~5]{K96a}, see also \cite[Lem.~2.3]{FGR17a}.

\begin{lemma}\label{lem:coint-from-int}
 Let $\cat$ be unimodular and let $\intL$ and $\cointL$ be an integral and a cointegral of $\coend$ satisfying $\cointL \circ \intL = \id_{\one}$. The Hopf pairing $\pairL$ is non-degenerate if and only if there exists a non-zero coefficient $\zeta \in \Bbbk^*$ satisfying the equation
 \begin{equation}\label{E:mod_par}
  \pairL \circ ( \intL \otimes \id_\coend ) = \zeta \cointL.
 \end{equation}
\end{lemma}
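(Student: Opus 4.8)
The plan is to reformulate non-degeneracy of $\pairL$ in terms of the morphism
\[
 \Phi := (\pairL \otimes \id_{\coend^*}) \circ (\id_\coend \otimes \lcoev_\coend) \colon \coend \to \coend^*,
\]
since, by the definition recalled in Section~\ref{S:modularity}, the pairing $\pairL$ is non-degenerate exactly when $\Phi$ is an isomorphism. At the outset I would record two auxiliary facts. First, un-currying $\Phi \circ \intL \colon \one \to \coend^*$ along the duality between $\coend$ and $\coend^*$ recovers precisely the morphism $\nu := \pairL \circ (\intL \otimes \id_\coend)$, so that $\nu = 0$ if and only if $\intL \in \ker\Phi$. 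Second, the Frobenius pairing $\beta := \cointL \circ \multL \colon \coend \otimes \coend \to \one$ is non-degenerate -- this is the input from Corollary~4.2.13 of~\cite{KL01} already invoked in Lemma~\ref{L:identities} -- so the induced map $\beta^\flat \colon \coend \to \coend^*$ is an isomorphism. I would also use that cointegrals are unique up to scalar and that dualization is an exact contravariant autoequivalence of $\cat$, so that $\coend$ and $\coend^*$ have the same (finite) length.

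For the \emph{only if} direction, assume $\Phi$ is an isomorphism. Pairing an arbitrary first argument through $\pairL$, I would show that the two morphisms $(\nu \otimes \id_\coend) \circ \copL$ and $\unitL \circ \nu$ from $\coend$ to $\coend$ become equal: indeed, using the Hopf-pairing axiom in the form compatible with the product in the first slot and the coproduct in the second (the left analogue of \eqref{E:Hopf_1}), the composite $\pairL \circ (\id_\coend \otimes ((\nu \otimes \id_\coend) \circ \copL))$ collapses, via the integral identity \eqref{eq:right-integral-def} which rewrites $\multL \circ (\intL \otimes \id_\coend)$ as $\intL \circ \counitL$, into $\pairL \circ (\id_\coend \otimes (\unitL \circ \nu))$. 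Non-degeneracy of $\pairL$ then cancels the free argument and forces $(\nu \otimes \id_\coend) \circ \copL = \unitL \circ \nu$, i.e. $\nu$ is a (right) cointegral of $\coend$. Since $\Phi$ is injective and $\intL \neq 0$ we have $\nu \neq 0$, and uniqueness of cointegrals gives $\nu = \zeta\,\cointL$ with $\zeta \in \Bbbk^*$.

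For the \emph{if} direction, suppose $\nu = \zeta\,\cointL$ with $\zeta \neq 0$, so that $\cointL = \zeta^{-1}\,\pairL \circ (\intL \otimes \id_\coend)$. Substituting this into $\beta = \cointL \circ \multL$ and applying \eqref{E:Hopf_1} to move the integral past the product, I would obtain a factorization
\[
 \beta^\flat = \Phi \circ \rho ,
\]
where $\rho \colon \coend \to \coend$ is the morphism assembled from $\copL \circ \intL$, the pairing $\pairL$, and a braiding, reading on elements $\rho(y) = \zeta^{-1}\sum \intL_{(2)}\,\pairL(\intL_{(1)}, y)$. Because $\beta^\flat$ is an isomorphism, this factorization exhibits $\Phi$ as an epimorphism; since $\coend$ and $\coend^*$ have equal finite length, $\Phi$ is then an isomorphism, i.e. $\pairL$ is non-degenerate.

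The main obstacle is this last direction, and specifically the factorization $\beta^\flat = \Phi \circ \rho$: the key idea is that the \emph{a priori} non-degenerate Frobenius pairing $\beta$ can be rewritten, using only the single scalar relation $\nu = \zeta\cointL$ together with the Hopf-pairing axiom, so that its adjoint manifestly lands in the image of $\Phi$, which is what bootstraps surjectivity of $\Phi$. The one genuinely delicate point throughout is the braided bookkeeping -- tracking the braidings implicit in the manipulations of $\copL \circ \intL$ and in the two slot-conventions of the Hopf-pairing axioms -- which I would carry out at the level of string diagrams rather than elementwise. I note that this argument reproves, in the present categorical language, the statement quoted from \cite{K96a} and \cite{FGR17a}.
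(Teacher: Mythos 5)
The paper does not prove this lemma at all: it is quoted as a corollary of a more general result in \cite{K96a} (see also Lemma 2.3 of \cite{FGR17a}), so you are supplying an argument the authors delegate to the literature rather than diverging from an in-text proof. Your proof is correct. In the \emph{only if} direction, showing that $\nu := \pairL \circ (\intL \otimes \id_\coend)$ satisfies the cointegral equation by pairing against a free first argument, collapsing the product via \eqref{eq:right-integral-def}, and cancelling by non-degeneracy is exactly the right mechanism, and $\nu \neq 0$ follows as you say from injectivity of $\Phi$ together with $\cointL \circ \intL = \id_{\one}$. In the \emph{if} direction, the factorization $\beta^\flat = \Phi \circ \rho$ through the Frobenius form $\beta = \cointL \circ \multL$ is a clean way to bootstrap surjectivity of $\Phi$, and there is no circularity: the non-degeneracy of $\beta$ (Corollary 4.2.13 of \cite{KL01}, already used in the proof of Lemma \ref{L:identities}) holds for any unimodular finite tensor category and does not presuppose factorizability; the step from epimorphism to isomorphism via equal finite length is also sound. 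Two small points you should make explicit if writing this up: you invoke the product-in-the-first-slot compatibility of $\pairL$ and the unit/counit compatibility $\pairL \circ (\id_\coend \otimes \unitL) = \counitL$, neither of which is recorded in the paper (only \eqref{E:Hopf_1} and \eqref{E:Hopf_2} are), though both are part of Theorem 3.7 of \cite{L95} and can be derived from \eqref{E:Hopf_1}, \eqref{E:Hopf_2} and \eqref{E:coprod}; and the cancellation step uses that invertibility of the first-slot adjoint $\Phi$ forces injectivity of the second-slot adjoint, which is standard for finite-length objects but worth a sentence. The braided bookkeeping you flag is genuinely harmless here because $\intL$ has source $\one$, so all braidings involving it are trivial by naturality, and the two-sidedness of the integral makes the computation independent of the slot-ordering convention.
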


We call $\zeta$ the \textit{modularity parameter} of $\intL$. Recall that both $\intL$ and $\cointL$ are unique up to scalar, and both $\zeta$ and $\cointL$ depend on the choice of the normalization of $\intL$.

The next statement was proven in \cite[Cor.~6.4]{GR17}.

\begin{lemma}\label{lem:lambda-iota}
	If $\cat$ is a modular category, then there exists a unique morphism $\eta_{\one} : \one \to P_{\one}$ satisfying, for every $V \in \Irr$, the equation
    \begin{equation}\label{eq:lambda-iota}
	 \cointL \circ i_{P_V} = \delta_{V,\one} \eta_{\one}^* \otimes \epsilon_{\one}.
	\end{equation}
\end{lemma}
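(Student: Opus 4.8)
The plan is to recognize $\cointL\circ i_X$ as (the mate of) a natural endomorphism of the identity functor, and then to read off its value on the indecomposable projectives $P_V$ from the head of $P_V$.

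First, for each $X$ set $c_X:=\cointL\circ i_X\in\calC(X^*\otimes X,\one)$ and let $\psi_X\in\End_\calC(X)$ be its mate under the adjunction $\calC(X^*\otimes X,\one)\cong\End_\calC(X)$, explicitly $\psi_X=(\id_X\otimes c_X)\circ(\lcoev_X\otimes\id_X)$. Dinaturality of $i_{(-)}$ turns into naturality of $\psi_{(-)}$, so that $\psi\in\End(\id_\calC)$ under the standard identification $\calC(\coend,\one)\cong\End(\id_\calC)$. Under the mate correspondence, a morphism of the special form $\beta^*\otimes\alpha$ with $\alpha\colon X\to\one$ and $\beta\colon\one\to X$ corresponds to $\beta\circ\alpha$; hence proving the Lemma is equivalent to showing that $\psi_{P_V}=0$ for $V\neq\one$ and that $\psi_{P_{\one}}=\eta_{\one}\circ\epsilon_{\one}$ for a unique $\eta_{\one}\colon\one\to P_{\one}$.

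The main step is to show that $\psi_X$ factors through the trivial part of $X$. Here I use that $X$ is a comodule over the coalgebra $\coend$ via the universal coaction $\rho_X\colon X\to\coend\otimes X$, that a standard coend computation gives $\psi_X=(\cointL\otimes\id_X)\circ\rho_X$, and that $\cointL$ is two-sided by unimodularity. Coassociativity of $\rho_X$ together with the right cointegral identity $(\cointL\otimes\id_\coend)\circ\copL=\unitL\circ\cointL$ yields
\[
 \rho_X\circ\psi_X=\big(((\cointL\otimes\id_\coend)\circ\copL)\otimes\id_X\big)\circ\rho_X=((\unitL\circ\cointL)\otimes\id_X)\circ\rho_X=\unitL\otimes\psi_X,
\]
so that $\im\psi_X$ is coinvariant, i.e.\ contained in the largest subobject of $X$ isomorphic to a direct sum of copies of $\one$. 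Applying this image statement to $X^*$ and dualizing — using $\cointL\circ\antipL=\cointL$ (the cointegral analogue of part (\textit{ii}) of Lemma~\ref{L:identities}), which gives $\psi_{X^*}=(\psi_X)^*$ — shows that $\psi_X$ \emph{also} factors through the largest quotient of $X$ isomorphic to a direct sum of copies of $\one$. This is the categorical incarnation of the Hopf-algebraic identity $h\Lambda=\Lambda h=\counitL(h)\Lambda$ for a two-sided integral $\Lambda$.

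Finally I evaluate on $P_V$. Since $P_V$ is the projective cover of the simple $V$, its head is $V$, so $\calC(P_V,\one)=\delta_{V,\one}\,\Bbbk$ and the largest trivial quotient of $P_V$ is $\one$ if $V=\one$ (realized by $\epsilon_{\one}$) and $0$ otherwise. By the previous paragraph $\psi_{P_V}$ factors through this quotient, whence $\psi_{P_V}=0$ and $\cointL\circ i_{P_V}=0$ for $V\neq\one$, while $\psi_{P_{\one}}=\eta_{\one}\circ\epsilon_{\one}$ for some $\eta_{\one}\colon\one\to P_{\one}$ (the image constraint is automatic, as $\im\eta_{\one}$ is a quotient of $\one$). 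Translating back through the mate correspondence gives $\cointL\circ i_{P_{\one}}=\eta_{\one}^*\otimes\epsilon_{\one}$, and uniqueness of $\eta_{\one}$ is immediate: $\epsilon_{\one}$ is a fixed non-zero epimorphism, so the displayed equation pins down $\eta_{\one}^*$, hence $\eta_{\one}$. I expect the second step to be the crux: setting up the coend (co)module formalism, verifying that $\psi_X$ really is the integral action, and — crucially — upgrading the \emph{subobject} factorization to the \emph{quotient} factorization, which genuinely requires two-sidedness of $\cointL$, is where all the work sits; everything else is formal.
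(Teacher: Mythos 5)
Your strategy is necessarily different from the paper's, since the paper offers no proof at all and simply cites Corollary 6.4 of \cite{GR17}. Taken on its own terms, the outline is sound: the mate reformulation ($\beta^*\otimes\alpha\leftrightarrow\beta\circ\alpha$), the coassociativity computation giving $\rho_X\circ\psi_X=\unitL\otimes\psi_X$ from the right cointegral identity, the evaluation on $P_V$ via $\calC(P_V,\one)=\delta_{V,\one}\,\Bbbk$, and the uniqueness argument (using that $\id\otimes\,\epsilon_{\one}$ is epic) are all correct.

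There is, however, a genuine gap exactly where you predict ``all the work sits,'' and you do not close it: the assertion that the equalizer of the canonical coaction with the trivial one is ``the largest subobject of $X$ isomorphic to a direct sum of copies of $\one$.'' The identity $\rho_X\circ\psi_X=\unitL\otimes\psi_X$ only says $\im\psi_X$ is coinvariant; identifying coinvariants with the trivial isotypic part is a substantive claim, and it is precisely where factorizability must enter --- your argument as written invokes only unimodularity, which should itself be a warning sign. The claim can be proved with ingredients already in the paper: if $u\colon U\to X$ satisfies $\delta_X\circ u=u\otimes\unitL$, post-compose with $\id_X\otimes(j_Y\circ\DD)$ and use $j_Y\circ\DD\circ i_X=\T_{X,Y}$ together with $j_Y\circ\DD\circ\unitL=\T_{\one,Y}$ to conclude that the double braiding $c_{Y,X}\circ c_{X,Y}$ restricted along $u\otimes\id_Y$ is trivial; naturality of the braiding and monicity of $u\otimes\id_Y$ (exactness of $\otimes$) then give $c_{Y,U}\circ c_{U,Y}=\id_{U\otimes Y}$ for all $Y$, i.e.\ $\im u$ is transparent, and Shimizu's characterization of factorizability quoted in Section~\ref{S:modularity} forces $\im u$ to be a direct sum of copies of $\one$. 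With that inserted --- and with the still-unverified but routine identities $\cointL\circ\antipL=\cointL$ and $\psi_{X^*}=(\psi_X)^*$ needed for your dualization to the quotient statement --- the proof is complete.
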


\subsection{Drinfeld map}\label{S:Drinfeld}

Consider the dinatural transformation $\T$ whose component $\T_{X,Y} : X^* \otimes X \to Y \otimes Y^*$ is defined, for all objects $X,Y \in \cat$, as 
\[
 \raisebox{-0.5\height}{\includegraphics{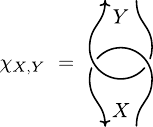}} 
\]
The universal properties of $\coend$ and of $\eend$ imply the existence of a unique morphism $\DD: \coend \to \eend$ satisfying $\T_{X,Y}  =  j_Y\circ \DD \circ i_X$. We call $\DD$ the {\em Drinfeld map} for the category $\calC$.
The following result was proven in \cite[Prop.~4.11]{FGR17a}.

\begin{proposition}\label{prop:C-DD}
A braided finite  tensor category is factorizable if and only if $\DD$ is invertible.
\end{proposition}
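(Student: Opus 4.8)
The plan is to identify the Drinfeld map $\DD$ with the morphism $\coend\to\coend^*$ induced by the Hopf pairing $\pairL$, after recognizing the end $\eend$ as a dual object of the coend $\coend$. Once this identification is in place, factorizability---which by the definition in Section~\ref{S:modularity} is exactly the invertibility of the $\pairL$-induced map $\coend\to\coend^*$---becomes literally equivalent to the invertibility of $\DD$.

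\emph{Step 1: a canonical isomorphism $\phi\colon\eend\xrightarrow{\sim}\coend^*$.} I would first exhibit $\eend$ as a right dual of $\coend$. By the universal property of $\coend$, a morphism $\eend\otimes\coend\to\one$ is the same datum as a dinatural family out of $\eend\otimes X^*\otimes X$, and I define the pairing by
\[
 \langle\,\cdot\,,\,\cdot\,\rangle\circ(\id_\eend\otimes i_X)
 =(\rev_X\otimes\lev_X)\circ(j_X\otimes\id_{X^*\otimes X})
 \colon \eend\otimes X^*\otimes X\to\one .
\]
One checks dinaturality in $X$ directly from the dinaturality of $j$, and non-degeneracy follows from finiteness of $\calC$ (the copairing is built analogously from $\lcoev$, $\rcoev$ and the universal property of $\eend$). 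This produces the desired isomorphism $\phi\colon\eend\to\coend^*$ with $\langle\,\cdot\,,\,\cdot\,\rangle=\lev_\coend\circ(\phi\otimes\id_\coend)$.

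\emph{Step 2: $\DD$ computes $\pairL$.} Next I would prove the key identity $\langle\,\cdot\,,\,\cdot\,\rangle\circ(\DD\otimes\id_\coend)=\pairL$. By the universal property of $\coend$ it is enough to check this after precomposing with $i_X\otimes i_Y$. Using the formula of Step~1 together with the defining relation $\T_{X,Y}=j_Y\circ\DD\circ i_X$, the left-hand side becomes $(\rev_Y\otimes\lev_Y)\circ(\T_{X,Y}\otimes\id_{Y^*\otimes Y})$, that is, the double-braiding graph defining $\T_{X,Y}$ closed up by evaluations against $Y^*\otimes Y$. This is precisely the diagram defining $\pairL\circ(i_X\otimes i_Y)$ in Equation~\eqref{E:Hopf_pairing}, so the two sides agree.

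\emph{Step 3 and main obstacle.} Combining the two steps, the morphism $(\pairL\otimes\id_{\coend^*})\circ(\id_\coend\otimes\lcoev_\coend)\colon\coend\to\coend^*$ induced by $\pairL$ equals $\phi\circ\DD$: indeed $\pairL=\lev_\coend\circ((\phi\circ\DD)\otimes\id_\coend)$ by Steps~1--2, and feeding this through $\lcoev_\coend$ and simplifying via the snake identity yields $\phi\circ\DD$. Since $\phi$ is an isomorphism, $\DD$ is invertible if and only if $\phi\circ\DD$ is, which by definition is exactly factorizability. The delicate point is the diagrammatic identity of Step~2, where the conventions for left and right duals, the directions of the braidings, and the pivotal structure must all be tracked carefully; in particular one must verify that $\DD$ reproduces $\pairL$ itself rather than its mirror variant $\tilde{\pairL}$ of Equation~\eqref{E:mirrored_pairing}. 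A secondary technical point is the dinaturality and non-degeneracy of the canonical pairing in Step~1, which is where the finiteness of $\calC$ is genuinely used.
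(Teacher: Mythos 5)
Your approach is sound, but note first that the paper does not prove this proposition at all: it is quoted from Proposition~4.11 of \cite{FGR17a}. Your argument is essentially the standard proof given there, so the comparison is really with that reference and with fragments the paper does reproduce. The canonical identification $\eend\cong\coend^*$ of your Step~1 is exactly what the paper's morphisms $\tilde{k}\colon\coend^*\to\eend$ and $\tilde{\ell}\colon\coend\to\eend^*$ of Section~\ref{S:Drinfeld} encode, via the relations $\lev_\eend\circ(\tilde{\ell}\otimes\tilde{k})=\rev_\coend$ and $(\tilde{k}\otimes\tilde{\ell})\circ\rcoev_\coend=\lcoev_\eend$; one can obtain it more structurally by noting that $(-)^*$ is an anti-equivalence and hence carries the coend $\int^X X^*\otimes X$ to the end $\int_X X\otimes X^*$, which packages your dinaturality and non-degeneracy checks into one statement. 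Your key identity of Step~2 appears verbatim in the paper, in the proof of Lemma~\ref{L:inverse_Drinfeld}, as Equation~\eqref{E:inverse_Drinfeld_1}, $\lev_\eend\circ(\tilde{\ell}\otimes\DD)=\pairL$ (the pairing placed in the other tensor slot, but with identical content), and its verification on components $i_X\otimes i_Y$ is the one-line diagram chase you describe. On the convention issue you flag: even if your closure of $\T_{X,Y}$ produces $\tilde{\pairL}=\pairL\circ(\id_\coend\otimes\antipL)$ or $\pairL\circ c_{\coend,\coend}^{-1}$ rather than $\pairL$ itself, the equivalence survives, since the antipode of $\coend$ and the braiding are invertible, so all these pairings are non-degenerate simultaneously; it would be worth saying this explicitly so the proof does not hinge on a sign-of-braiding bookkeeping. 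With that caveat addressed, your Step~3 correctly reduces the statement to the definition of factorizability, and the argument is complete.
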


Let $k$ be the dinatural transformation whose component $k_X : \coend^* \to X \otimes X^*$ is given by $(i_{X^*})^*$ for every $X \in \calC$. Then $k$ determines a unique morphism ${\tilde{k} : \coend^* \to \eend}$ satisfying 
\begin{equation}\label{E:tilde_k}
 j_X \circ \tilde{k} = k_X
\end{equation}
for every $X \in \calC$. Similarly, let $\ell$ be the dinatural transformation whose component $\ell_X : X^* \otimes X \to \eend^*$ is given by $(j_{X^*})^*$ for every $X \in \calC$. Then $\ell$ determines a unique morphism $\tilde{\ell} : \coend \to \eend^*$ satisfying 
\begin{equation}\label{E:tilde_ell}
 \tilde{\ell} \circ i_X = \ell_X
\end{equation}
for every $X \in \calC$. These morphisms satisfy
\begin{equation}
 \lev_{\eend} \circ (\tilde{\ell} \otimes \tilde{k}) = \rev_{\coend}, \quad
 (\tilde{k} \otimes \tilde{\ell}) \circ \rcoev_{\coend} = \lcoev_{\eend}.
\end{equation}

\begin{lemma}\label{L:inverse_Drinfeld}
 If $\intL$ and $\cointL$ are an integral and a cointegral of $\coend$ satisfying ${\cointL \circ \intL = \id_{\one}}$, then the morphism $\tilde{D} : \eend \to \coend$ defined by
 \[
  \raisebox{-0.5\height}{\includegraphics{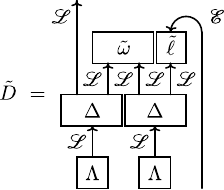}} 
 \]
 satisfies $\tilde{D} \circ \DD = \zeta \id_\coend$ for the modularity parameter $\zeta \in \Bbbk^*$.
\end{lemma}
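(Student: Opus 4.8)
The plan is to verify the identity $\tilde{D} \circ \DD = \zeta \id_\coend$ by exploiting the universal property of the coend $\coend$. Since both sides are morphisms out of $\coend$, it suffices to show that they agree after precomposition with every dinatural component $i_X : X^* \otimes X \to \coend$, that is, to establish
\[
 \tilde{D} \circ \DD \circ i_X = \zeta \, i_X
\]
for all $X \in \calC$. The advantage of this reduction is that $\DD \circ i_X$ is completely determined by its defining property $j_Y \circ \DD \circ i_X = \T_{X,Y}$, which unfolds the Drinfeld map into the explicit double-braiding morphism $\T_{X,Y} : X^* \otimes X \to Y \otimes Y^*$.

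First I would substitute the graphical definition of $\tilde{D}$ and feed in $\DD \circ i_X$. Using the dinaturality of the family $j_Y$ together with the defining relations $j_X \circ \tilde{k} = (i_{X^*})^*$ and $\tilde{\ell} \circ i_X = (j_{X^*})^*$ for the auxiliary morphisms $\tilde{k}$ and $\tilde{\ell}$, the occurrence of $\eend$ in the composite $\tilde{D} \circ \DD \circ i_X$ collapses: the $j_Y$-component contributed by $\DD \circ i_X$ cancels against the instance of $\eend$ built into $\tilde{D}$, leaving an explicit morphism $X^* \otimes X \to \coend$ in which the only remaining ingredients are the integral $\intL$, the double braiding inherited from $\T_{X,Y}$, and the dinatural map $i_X$ itself.

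The key step is then to recognize the double-braiding strand acting on $\intL$ as an instance of the Hopf pairing $\pairL$ of Equation \eqref{E:Hopf_pairing}. Concretely, after sliding the integral into position, the resulting configuration matches $\pairL \circ (\intL \otimes \id_\coend)$ composed with $i_X$, possibly after an application of Lemma \ref{L:identities}(\textit{ii}), $\antipL \circ \intL = \intL$, to absorb an antipode, and of the Hopf pairing identities \eqref{E:Hopf_1}--\eqref{E:Hopf_3} to bring the braidings into canonical form. At this point the modularity relation \eqref{E:mod_par}, namely $\pairL \circ (\intL \otimes \id_\coend) = \zeta \cointL$, is exactly what produces the scalar $\zeta$ and replaces the pairing-against-the-integral by $\zeta$ times the cointegral $\cointL$.

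Finally, the normalization $\cointL \circ \intL = \id_\one$ collapses the remaining cointegral--integral pair, so that the whole composite reduces to $\zeta \, i_X$, as required; invertibility of $\DD$ guaranteed by factorizability (Proposition \ref{prop:C-DD}) then exhibits $\zeta^{-1} \tilde{D}$ as a genuine inverse, justifying the name. The main obstacle I anticipate is the bookkeeping in the graphical calculus of the second and third steps: one must correctly track how the double braiding of $\T_{X,Y}$, the antipodes hidden in $\tilde{k}$ and $\tilde{\ell}$, and the orientation conventions for the pairing combine, and verify that the cancellation of the $\eend$-strand via dinaturality leaves precisely the configuration $\pairL \circ (\intL \otimes \id_\coend)$ rather than a mirrored or antipode-twisted variant, for which the identities \eqref{E:Hopf_2}, \eqref{E:Hopf_3}, and \eqref{E:mirrored_pairing} would be needed.
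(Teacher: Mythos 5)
Your overall strategy is the one the paper follows: reduce to dinatural components, identify the Drinfeld map paired against the end with the Hopf pairing (the paper isolates this as the auxiliary identity \eqref{E:inverse_Drinfeld_1}, $\lev_\eend \circ (\tilde{\ell} \otimes \DD) = \pairL$, verified on components $i_X \otimes i_Y$ exactly as you suggest), and then invoke the modularity relation $\pairL \circ (\intL \otimes \id_\coend) = \zeta \cointL$ of Lemma \ref{lem:coint-from-int} to produce the scalar $\zeta$.

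There is, however, a genuine gap in your concluding step. After the modularity relation is applied, what remains is \emph{not} a bare cointegral--integral pair that the normalization $\cointL \circ \intL = \id_{\one}$ can collapse: the morphism $\tilde{D}$ contains its own copy of $\intL$, and the cointegral $\zeta\cointL$ produced by Lemma \ref{lem:coint-from-int} lands on one tensor factor of a coproduct-type expression built from that integral. One must then show that $(\cointL \otimes \id_\coend)$ applied to this expression returns $\id_\coend$ (equivalently $i_X$ on components). This is a Radford/Hopf-module-type statement, not a consequence of $\cointL \circ \intL = \id_{\one}$ alone; the paper obtains it from the dual version of Lemma 3.13 of \cite{DR12}, namely Equation \eqref{E:sec2pic2} composed with $\id_\coend \otimes \intL$, together with part (\textit{iii}) of Lemma \ref{L:identities} and the compatibility \eqref{E:Hopf_1} of $\pairL$ with $\multL$ and $\copL$ --- the latter being needed even to isolate the sub-expression $\pairL \circ (\intL \otimes \id_\coend)$ in the first place. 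Until these inputs are supplied, the passage from ``$\zeta\cointL$ applied to the remaining diagram'' to ``$\zeta\, i_X$'' is unjustified, and this is precisely where the real content of the lemma lies.
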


\begin{proof}
 First of all, we claim 
 \begin{equation}\label{E:inverse_Drinfeld_1}
  \lev_\eend \circ (\tilde{\ell} \otimes \DD) = \pairL.
 \end{equation}
 To see this, remark that, for all $X,Y \in \calC$, we have
 \begin{equation}
  \raisebox{-0.5\height}{\includegraphics{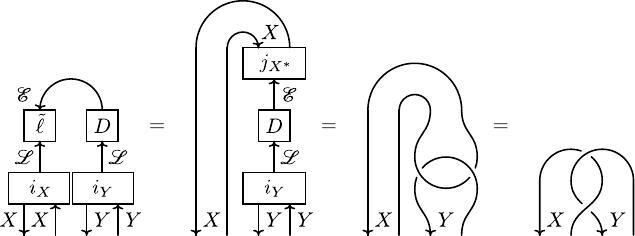}} 
 \end{equation}
 Next, recall that by definition $\tilde{\pairL} = \pairL \circ (\id_\coend \otimes \antipL)$, see above Equation \eqref{E:mirrored_pairing}. Then, the proof is given by
 \begin{align*}
  \includegraphics{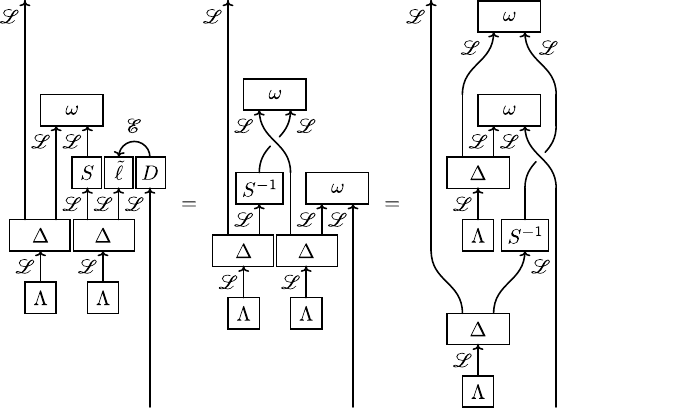} \\[0.5\baselineskip]
  \includegraphics{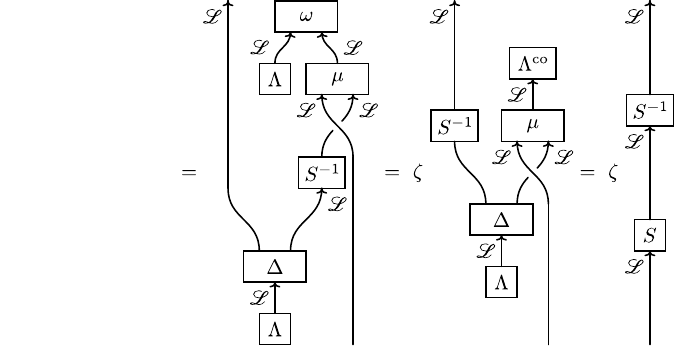}
 \end{align*}
 where the first equality follows from Equations \eqref{E:Hopf_3} and \eqref{E:inverse_Drinfeld_1}, the second one from naturality of the braiding, the third one from Equation \eqref{E:Hopf_1}, the fourth one from Lemma \ref{lem:coint-from-int} and from part (\textit{iii}) of Lemma \ref{L:identities}, and the last one from the left-hand side of Equation \eqref{E:sec2pic2}, by composing both morphisms with $\id_\coend \otimes \intL$ from the right, and by using naturality of the braiding.
\end{proof}

\subsection{Semisimple case}\label{S:ssi}

We finish this first part by discussing what all the above notions and conditions amount to in case $\calC$ is semisimple, so let us assume this for the remainder of the section. Let us start with ideals and traces. First of all, remark that the unit object $\one \in \calC$ is projective. In fact, conversely, if $\one \in \calC$ is projective, then $\calC$ is semisimple, see \cite[Cor.~4.2.13]{EGNO15}. Now, since ideals are absorbent under tensor products, $\Proj(\calC) = \calC$, and so \cite[Lem.~17]{GPV11} implies $\calC$ is the only non-zero ideal of $\calC$. Therefore Proposition \ref{P:uniqueness_of_trace} implies the categorical trace $\tr_\calC$ defined by
\[
 \tr_\calC(f) := \rev_V \circ \left( f \otimes \id_{V^*} \right) \circ \lcoev_V
\]
for every $f \in \End_\calC(V)$ is the unique non-zero trace on $\calC$, up to scalar.

For what concerns coends, \cite[Lem.~2]{K96a} gives
\[
 \coend = \bigoplus_{V \in \Irr} V^* \otimes V,
\]
and if we denote with $\iota_V \in \calC(V^* \otimes V,\coend)$ the canonical injection morphism for every $V \in \Irr$, then for every object $X \in \calC$ the component $i_X \in \calC(X^* \otimes X,\coend)$ of the dinatural transformation $i$ is given by
\[
 i_X = \sum_{i=1}^m \iota_{V_i} \circ \left( g_i^* \otimes f_i \right),
\]
where the object $V_i \in \Irr$ and the morphisms $f_i \in \calC(X,V_i)$ and $g_i \in \calC(V_i,X)$ satisfy
\[
 \id_X = \sum_{i=1}^m g_i \circ f_i.
\]

Next, \cite[Sec.~2.5]{K96a} implies an integral $\intL \in \calC(\one,\coend)$ and a cointegral $\cointL \in \calC(\coend,\one)$ satisfying $\cointL \circ \intL = \id_{\one}$ are given by
\[
 \intL = \sum_{V \in \Irr} \dim_\calC(V) \iota_V \circ \rcoev_V, \qquad
 \cointL = \pi_{\one},
\]
where $\pi_V \in \calC(\coend,V^* \otimes V)$ denotes the canonical projection morphism for every $V \in \Irr$, and $\dim_\calC(V) := \tr_\calC(\id_V)$ for every $V \in \calC$.  Note that this choice of normalization of $\intL$ is not the same one as in \cite{K96a}. It is rather the one we will need in the following in order to obtain the standard normalization of \textit{Kirby colors}, in the language of \cite{B03}, when applying our construction to the semisimple case. With this choice, twist non-degeneracy of $\calC$ translates to the condition
\[
 \Delta_{\pm} = \sum_{V \in \Irr} \langle \theta_V \rangle^{\pm1} \dim_\cat(V)^2 \neq 0,
\]
where for every $V \in \Irr$ and every $f \in \End_\calC(V)$ the scalar $\langle f \rangle \in \Bbbk$ is defined by $f = \langle f \rangle \id_V$, and $\intL$ and $\cointL$ satisfy Equation \eqref{E:mod_par} with
\[
 \zeta = \sum_{V \in \Irr} \dim_\cat(V)^2,
\]
see \cite[Rem.~3.10.3]{GR16}.

\section{Closed 3-manifold invariants}\label{S:3-manifold_invariants}

In this section we renormalize Lyubashenko invariants of closed 3-manifolds \cite{L94} through the use of modified traces. Our construction applies to all twist non-degenerate finite unimodular ribbon categories, although some of these conditions can be relaxed for most of the intermediate steps. Indeed, as a preparation, we define a functor $F_\intL : \calR_\intL \to \cat$, in Proposition \ref{P:LRT_functor}, and a link invariant $F'_{\intL,\rmt}$, in Theorem \ref{T:admissible_closed_graph_invariant}, whose construction does not require twist non-degeneracy. This additional hypothesis is first needed for the definition of the 3-manifold invariant $\rmL'_{\calC,\Tideal}$ in Theorem \ref{T:admissible_closed_3-manifold_invariant}. Therefore, unless stated otherwise, in this section we adopt the following convention:

\begin{quote}
 $\calC$ is a finite unimodular ribbon category.
\end{quote}

\subsection{Lyubashenko-Reshetikhin-Turaev functor}\label{SS:LRT_functor}

We start with the definition of a monoidal functor from a category of partially $\calC$-labeled ribbon graphs, featuring red and blue edges, to $\calC$. Red and blue edges play different roles in the construction: Red edges are related to the Lyubashenko invariant, they are unlabeled, and they are to be evaluated using the right integral $\intL$ on the coend $\coend$; Blue edges are more standard, they can be labeled with any object of $\calC$, and they are to be evaluated using the Reshetikhin-Turaev functor $F_\calC$. See Turaev's book \cite{T94} for a reference about ribbon graphs and Reshetikhin-Turaev functors.

By a \textit{closed manifold} we mean a compact manifold without boundary. Every manifold we will consider will be oriented, every diffeomorphism of manifolds will be positive, and all links and tangles will be both oriented and framed. If $M$ is a manifold, then $\overline{M}$, or sometimes also $(-1)M$, will denote the manifold obtained from $M$ by reversing its orientation. The interval $[0,1]$ will be denoted $I$.

An \textit{$(n,n')$-tangle} is a tangle with $n$ incoming boundary vertices and $n'$ outgoing ones. An \textit{$n$-bottom tangle} is a $(2n,0)$-tangle whose $2k-1$th incoming boundary vertex is connected to its $2k$th incoming boundary vertex by an edge directed from right to left for every $1 \leq k \leq n$.

An \textit{$n$-bottom graph} is a partially $\calC$-labeled ribbon graph with edges divided into two groups, \textit{red} and \textit{blue}, and with coupons coming in two flavors, \textit{bichrome} and \textit{blue}, satisfying the following conditions:
\begin{enumerate}
 \item The $2n$ leftmost incoming boundary vertices are red, while all the other incoming and outgoing boundary vertices are blue;
 \item Red edges are unlabeled, while blue edges are labeled, as is usual in Turaev's approach, with objects of $\calC$;
 \item Bichrome coupons are unlabeled, and they are quite rigid, meaning the only possible configurations are the ones represented in Figure \ref{F:smoothing}, with fixed number, orientations, and labels of blue edges, while standard coupons are labeled as usual with morphisms of $\calC$, and they are more flexible, with the only condition that they exclusively meet with blue edges;
 \item The operation of \textit{smoothing}, which consists in throwing away all blue edges and coupons and by replacing every bichrome coupon with red strands connecting red edges as shown in Figure \ref{F:smoothing}, produces a red $n$-bottom tangle.
\end{enumerate}

\begin{figure}[ht]
 \centering
 \includegraphics{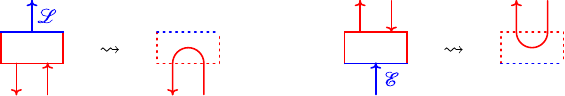}
 \caption{Bichrome coupons and their smoothing.}
 \label{F:smoothing}
\end{figure}

A $0$-bottom graph is simply called a \textit{bichrome graph}. See Figure \ref{F:example} for an example of a $1$-bottom graph together with its smoothing.

\begin{figure}[ht]
 \centering
 \includegraphics{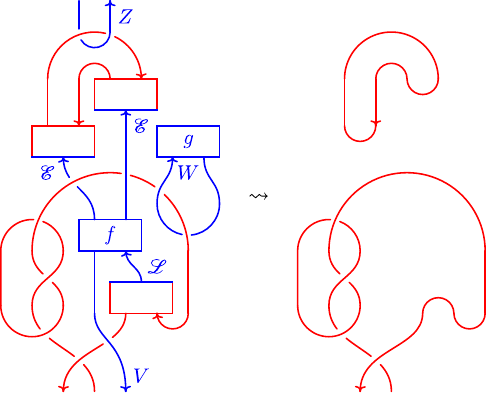}
 \caption{A $1$-bottom graph and its smoothing.}
 \label{F:example}
\end{figure}

Next, let us define the \textit{ribbon category $\calR_\intL$ of bichrome graphs\footnote{The notation $\calR_\intL$ was first used in \cite{DGP17}, and it refers to the fact that bichrome graphs feature red edges, which have to be evaluated using the integral $\intL$, as explained in the following. Remark however that $\calR_\intL$ is actually independent of the choice of a normalization for $\intL$, which is unique only up to a scalar coefficient. Remark also that our definition of $\calR_\intL$ is slightly simpler, as bichrome coupons are more rigid.}}. An object $(\underline{\epsilon},\underline{V})$ of $\calR_\intL$ is a finite sequence $((\epsilon_1,V_1), \ldots, (\epsilon_m,V_m))$ where $\epsilon_k \in \{ +, - \}$ is a sign and $V_k$ is an object of $\calC$ for every integer $1 \leq k \leq m$. Every object $(\underline{\epsilon},\underline{V})$ of $\calR_\intL$ determines a standard set of $\calC$-labeled blue framed oriented points inside $\R^2$, which we still denote with $(\underline{\epsilon},\underline{V})$. Then, a morphism $T : (\underline{\epsilon},\underline{V}) \to (\underline{\epsilon'},\underline{V'})$ of $\calR_\intL$ is an isotopy class of bichrome graphs in $\R^2 \times I$ from $(\underline{\epsilon},\underline{V}) \times \{ 0 \}$ to $(\underline{\epsilon'},\underline{V'}) \times \{ 1 \}$, with matching framings, orientations, and labels.

A morphism of $\calR_\intL$ is \textit{blue} if it features no red edge. The \textit{category $\calR_\calC$ of blue graphs} is the subcategory of $\calR_\intL$ having the same objects, but featuring only blue morphisms. The Reshetikhin-Turaev functor $F_\calC$ is naturally defined on $\calR_\calC$.

We denote with $(n)\calR_\intL((\underline{\epsilon},\underline{V}),(\underline{\epsilon'},\underline{V'}))$ the set of isotopy classes of $n$-bottom graphs from $(n)(\underline{\epsilon},\underline{V}) \times \{ 0 \}$ to $(\underline{\epsilon'},\underline{V'}) \times \{ 1 \}$, where $(n)(\underline{\epsilon},\underline{V})$ is obtained from $(\underline{\epsilon},\underline{V})$ by adding $2n$ unlabeled red incoming boundary vertices to the left, with negative orientation in odd positions and positive orientation in even positions. Note that $(n)(\underline{\epsilon},\underline{V})$ is not an object of $\calR_\intL$, and that $(n)\calR_\intL((\underline{\epsilon},\underline{V}),(\underline{\epsilon'},\underline{V'}))$ is not a morphism space in $\calR_\intL$.

\FloatBarrier

If $(\underline{\epsilon},\underline{V})$ and $(\underline{\epsilon'},\underline{V'})$ are objects of $\calR_\intL$ let us consider the \textit{plat closure map}
\[
 \begin{array}{rccc}
  \pc : & (n)\calR_\intL((\underline{\epsilon},\underline{V}),(\underline{\epsilon'},\underline{V'})) & \to & \calR_\intL((\underline{\epsilon},\underline{V}),(\underline{\epsilon'},\underline{V'})) \\ & & & \\
  & \raisebox{-0.5\height}{\includegraphics{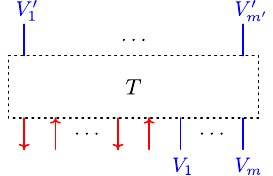}} & \mapsto 
  & \raisebox{-0.5\height}{\includegraphics{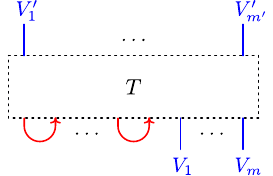}}
 \end{array}
\]

When $T \in \calR_\intL((\underline{\epsilon},\underline{V}),(\underline{\epsilon'},\underline{V'}))$ is a bichrome graph then we say an $n$-bottom graph $\tilde{T} \in (n)\calR_\intL((\underline{\epsilon},\underline{V}),(\underline{\epsilon'},\underline{V'}))$ whose smoothing has no closed components is a \textit{complete $n$-bottom graph presentation of $T$} if
\[
 \pc(\tilde{T}) = T.
\]
We also say a set $C$ of red edges of a morphism $T$ of $\calR_\intL$ is a \textit{chain} if all of its elements are contained in one and the same component of the smoothing of $T$. We use the term \textit{cycle} to denote a maximal chain in $T$ with respect to inclusion. For instance, the plat closure of the left-hand side of Figure~\ref{F:example} features two distinct cycles, one composed of the red edge intersecting the bichrome tangle with blue edge labeled by $\coend$, and another composed of the remaining two red edges.

We can now explain how to extend the Reshetikhin-Turaev functor $F_\calC : \calR_\cat \to \cat$ to a functor $F_\intL : \calR_\intL \to \cat$. If $(\underline{\epsilon},\underline{V})$ is an object of $\calR_\intL$, then we set 
\[
 F_\intL(\underline{\epsilon},\underline{V}) := F_\calC(\underline{\epsilon},\underline{V}).
\]
If $T : (\underline{\epsilon},\underline{V}) \to (\underline{\epsilon'},\underline{V'})$ is a morphism of $\calR_\intL$, then every complete $n$-bottom graph presentation $\tilde{T}$ of $T$ induces an $n$-dinatural transformation 
\[
 \eta_{\tilde{T}} : H_{F_\calC(\underline{\epsilon},\underline{V})} \din F_\calC(\underline{\epsilon'},\underline{V'}),
\]
where $F_\calC(\underline{\epsilon'},\underline{V'}) : (\calC^\op \times \calC)^{\times n} \to \calC$ denotes the constant functor sending every object $(X_1,Y_1,\ldots,X_n,Y_n) \in (\calC^\op \times \calC)^{\times n}$ to the object $F_\calC(\underline{\epsilon'},\underline{V'}) \in \calC$, where
\[
 H_{F_\calC(\underline{\epsilon},\underline{V})} : (\calC^\op \times \calC)^{\times n} \to \calC
\]
sends every object $(X_1,Y_1,\ldots,X_n,Y_n) \in (\calC^\op \times \calC)^{\times n}$ to the object 
\[
 X_1^* \otimes Y_1 \otimes \ldots \otimes X_n^* \otimes Y_n \otimes F_\calC(\underline{\epsilon},\underline{V}) \in \calC,
\]
and where $n$-dinaturality means $\eta_{\tilde{T}}$ defines a dinatural transformation 
\[
 \eta_{\tilde{T}}^\sigma : H_{F_\calC(\underline{\epsilon},\underline{V})} \circ \sigma \din F_\calC(\underline{\epsilon'},\underline{V'}) \circ \sigma
\]
for the permutation functor 
\[
 \sigma : (\calC^{\times n})^\op \times \calC^{\times n} \to (\calC^\op \times \calC)^{\times n}
\]
sending every object $((X_1,\ldots,X_n),(Y_1,\ldots,Y_n)) \in (\calC^{\times n})^\op \times \calC^{\times n}$ to the object $(X_1,Y_1,\ldots,X_n,Y_n) \in (\calC^\op \times \calC)^{\times n}$. The $n$-dinatural transformation $\eta_{\tilde{T}}$ associates with every object $(X_1, \ldots, X_n) \in \calC^{\times n}$ the morphism
\[
 F_\calC(\tilde{T}_{(X_1,\ldots,X_n)}) \in \calC(X_1^* \otimes X_1 \otimes \ldots \otimes X_n^* \otimes X_n \otimes F_\calC(\underline{\epsilon},\underline{V}),F_\calC(\underline{\epsilon'},\underline{V'})),
\]
where $\tilde{T}_{(X_1,\ldots,X_n)}$ is the ribbon graph obtained from the $n$-bottom graph $\tilde{T}$ by labeling its $k$th cycle with $X_k$, by labeling every bichrome coupon intersecting it with either $i_{X_k}$ or $j_{X_k}$, the structure morphisms of $\coend$ and $\eend$ defined in Section \ref{S:coend}, for every integer $1 \leq k \leq n$, and by forgetting the distinction between red and blue. The universal property defining $\coend$ implies the object $\coend^{\otimes n} \otimes F_\calC(\underline{\epsilon},\underline{V})$ equipped with the dinatural transformation $i^{\otimes n} \otimes \id_{F_\calC(\underline{\epsilon},\underline{V})}$ is the coend for the functor ${H_{F_\calC(\underline{\epsilon},\underline{V})} \circ \sigma}$. This determines a unique morphism ${f_\calC(\eta_{\tilde{T}}) \in \calC(\coend^{\otimes n} \otimes F_\calC(\underline{\epsilon},\underline{V}),F_\calC(\underline{\epsilon'},\underline{V}'))}$ satisfying
\begin{equation}\label{E:f_calC}
 f_\calC(\eta_{\tilde{T}}) \circ (i_{X_1} \otimes \ldots \otimes i_{X_n} \otimes \id_{F_\calC(\underline{\epsilon},\underline{V})}) = F_\calC(\tilde{T}_{(X_1,\ldots,X_n)}).
\end{equation}
Then we define $F_\intL (T) : F_\calC(\underline{\epsilon},\underline{V}) \to F_\calC(\underline{\epsilon'},\underline{V'})$ as
\begin{equation}\label{E:LRT_functor}
 F_\intL (T) := 
 \pic{LRT_morphism}
\end{equation}

\begin{proposition}\label{P:LRT_functor}
 $F_\intL : \calR_\intL \to \calC$ is a well-defined monoidal functor.
\end{proposition}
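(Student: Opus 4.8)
The plan is to treat the three defining properties of a monoidal functor separately: well-definedness of $F_\intL$ on morphisms, compatibility with composition and identities, and compatibility with the tensor product. Since $F_\intL$ agrees with the Reshetikhin–Turaev functor $F_\calC$ on objects and on blue morphisms, in each case the evaluation of the blue ribbon data is already handled by $F_\calC$, and the only genuinely new feature is the insertion of the integral $\intL$ along the cut red cycles via \eqref{E:f_calC} and \eqref{E:LRT_functor}.

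The central and hardest point is \emph{well-definedness}, i.e.\ that the morphism $F_\intL(T)$ produced by \eqref{E:LRT_functor} is independent of the chosen complete $n$-bottom graph presentation $\tilde T$ of $T$. First I would observe that the integer $n$ is not a free choice: red edges of a morphism of $\calR_\intL$ close up into loops, so the number of cycles of $T$ equals the number of red components of $T$, and this is exactly the $n$ of any complete $n$-bottom graph presentation. The residual freedom in $\tilde T$ is then purely topological, and the key technical lemma I would prove is that any two complete $n$-bottom graph presentations with the same plat closure are connected by a finite sequence of elementary moves: ambient isotopy of the bottom graph, relocation of the point at which a given red cycle is cut open (equivalently, transporting a bichrome coupon once around its cycle through the plat-closure caps), and a permutation of the cycles. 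Invariance under isotopy follows from isotopy invariance of $F_\calC$ together with the dinaturality built into the assignment $f_\calC$; invariance under permuting cycles only reorders tensor factors of $\coend^{\otimes n}$ and of $\intL^{\otimes n}$. The essential algebraic content lies in the cut-point move: sliding the cut past a bichrome coupon modifies the $\coend$-colored graph by a (co)multiplication, and equality of the two evaluations reduces, after composing with $\intL^{\otimes n}$ and applying the universal property \eqref{E:f_calC} of $\coend$, to the defining integral property \eqref{eq:right-integral-def}, its two-sidedness, and the identities of Lemma~\ref{L:identities} (in particular $\antipL \circ \intL = \intL$ for the orientation of the cut cycle).

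Granting well-definedness, functoriality is routine. The identity morphism of $\calR_\intL$ consists of vertical blue strands only, so $n=0$ and $F_\intL$ restricts to $F_\calC(\id)=\id$. For a composite $T' \circ T$, I would note that stacking does not merge the closed red components of the two factors, so a complete presentation of $T' \circ T$ is obtained by stacking presentations of $T$ and of $T'$, with $n_{T' \circ T} = n_T + n_{T'}$; since $F_\calC$ is functorial and the integrals inserted along the $T$- and $T'$-cycles act on disjoint tensor factors of $\coend^{\otimes n_{T'\circ T}}$, the universal property \eqref{E:f_calC} yields $F_\intL(T' \circ T) = F_\intL(T') \circ F_\intL(T)$.

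Finally, for monoidality I would use that horizontal juxtaposition of presentations of $T$ and $S$ is a presentation of $T \otimes S$, whose cycles are the disjoint union of those of $T$ and $S$; hence $\intL^{\otimes(n_T+n_S)} = \intL^{\otimes n_T} \otimes \intL^{\otimes n_S}$, and the corresponding factorization of the $\coend$-coloring through $f_\calC$ gives $F_\intL(T \otimes S) = F_\intL(T) \otimes F_\intL(S)$, while compatibility with the unit and with the (strict) associativity and unit constraints is inherited directly from $F_\calC$. I expect the only steps requiring real care to be the topological move-generation lemma and the coend bookkeeping in the cut-point move; once those are in place, functoriality and monoidality are formal consequences of the universal property of $\coend$ and the functoriality and monoidality of $F_\calC$.
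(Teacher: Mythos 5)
Your overall strategy is the same as the paper's: reduce well-definedness to independence of $F_\intL(T)$ under a list of elementary modifications of the complete $n$-bottom graph presentation, handle each modification via the universal property of $\coend$ and properties of $\intL$, and then obtain functoriality and monoidality by stacking and juxtaposing presentations, exactly as in the paper's appendix. The genuine gap is in your move-generation lemma. You claim that two complete $n$-bottom graph presentations of the same $T$ differ only by ambient isotopy of the bottom graph, relocation of the cut point, and permutation of the cycles. This is not true: a presentation involves, for each cycle, a choice of \emph{framed connecting path} from the bottom boundary of the diagram to the chosen basepoint, and two presentations whose connecting paths differ by an over/under crossing with another (red or blue) strand, or by a full twist of framing, are not ambient isotopic as bottom graphs rel boundary, even though their plat closures agree. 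These discrepancies are not absorbed by isotopy invariance of $F_\calC$ or by dinaturality; they are precisely the steps the paper calls \emph{independence of paths} and \emph{independence of framings}, each requiring its own (short but non-vacuous) argument: crossing exchange reduces, after applying the universal property of $\coend$, to $c_{\coend,V}\circ(\intL\otimes \id_V)=c^{-1}_{V,\coend}\circ(\intL\otimes \id_V)$, which holds by naturality of the braiding because $\intL$ is a morphism out of $\one$, and kink insertion reduces to $\theta_\coend\circ\intL=\intL$ by naturality of the twist. For the moves you do list, your algebraic diagnosis is essentially right but slightly imprecise: the basepoint relocation reduces not just to two-sidedness of $\intL$ and $\antipL\circ\intL=\intL$, but to the identity $(\antipL\otimes\antipL^{-1})\circ\copL\circ\intL=\copL\circ\antipL^{-1}\circ\intL$ of Equation~\eqref{eq:brHopf-identity}, which needs part (\textit{iii}) of Lemma~\ref{L:identities} as well as part (\textit{ii}); cycle permutation reduces to $c_{\coend,\coend}\circ(\intL\otimes\intL)=\intL\otimes\intL$. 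Once the two missing moves are added and treated as above, the remainder of your argument (identities, composition with $n_{T'\circ T}=n_T+n_{T'}$, tensor product by juxtaposition) coincides with the paper's proof.
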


Since the proof of Proposition \ref{P:LRT_functor} is rather long and technical, we postpone it to Appendix \ref{S:proof_LRT_functor}. We call $F_\intL : \calR_\intL \to \calC$ the \textit{Lyubashenko-Reshetikhin-Turaev functor} associated with the integral $\intL$. Recall that $\intL$ is unique up to scalar, and $F_\intL$ depends on the choice of its normalization.

\subsection{Renormalized Lyubashenko invariant of closed 3-manifolds}\label{SS:3-manifold_invariants}

For the next step of the construction, we need to fix an ideal $\calI$ in $\calC$ together with a trace $\rmt$ on $\calI$, so let us suppose such a $\rmt$ exists, and let us fix a choice, in case it is not unique. This key ingredient allows for the definition of a renormalized invariant of closed bichrome graphs which satisfy a certain admissibility condition. Indeed, in order to use the trace $\rmt$, we need a blue edge whose label is an object of $\calI$. With this in place, we can define a renormalized Lyubashenko invariant of closed 3-manifolds equipped with embedded admissible closed bichrome graphs. Theorems \ref{T:admissible_closed_graph_invariant} and \ref{T:admissible_closed_3-manifold_invariant} prove the existence of such invariants.

We say a bichrome graph is \textit{closed} if it features no boundary vertex, and we say it is \textit{admissible} if it features a blue edge whose label is an object of $\calI$.

\begin{remark}\label{R:admissible_ssi}
 When $\calC$ is semisimple, a bichrome graph is admissible if and only if it is non-empty, because every ideal in $\calC$ coincides with $\calC$ itself. Therefore, in that case, we assume the admissibility condition to be void, since for our purposes an empty graph can always be replaced by an unknot with label $\one$.
\end{remark}

If $T$ is a closed admissible bichrome graph and $V$ is an object of $\calI$, we say an endomorphism $T_V$ of $(+,V)$ in $\calR_\intL$ is a \textit{cutting presentation of $T$} if 
\[
 \tr_{\calR_\intL}(T_V) = T,
\]
where $\calR_\intL$ inherits its ribbon structure directly from $\calR_\calC$.

\begin{theorem}\label{T:admissible_closed_graph_invariant}
 If $T$ is an admissible closed bichrome graph and $T_V$ is a cutting presentation of $T$ then
 \[
  F'_{\intL,\rmt}(T) := \rmt_V(F_\intL(T_V))
 \]
 is an invariant of the isotopy class of $T$.
\end{theorem}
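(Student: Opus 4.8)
The plan is to establish two things: that the scalar $\rmt_V(F_\intL(T_V))$ does not depend on the chosen cutting presentation $T_V$, which is what makes $F'_{\intL,\rmt}(T)$ well-defined, and that it depends only on the isotopy class of $T$. The second point is largely built into the framework: morphisms of $\calR_\intL$ are isotopy classes of bichrome graphs, $F_\intL$ is defined on these classes by Proposition \ref{P:LRT_functor}, and the defining condition $\tr_{\calR_\intL}(T_V) = T$ is a condition on isotopy classes of closed graphs. Hence an ambient isotopy carrying $T$ to $T'$ carries every cutting presentation of $T$ to one of $T'$ with the same image under $F_\intL$, so isotopy invariance reduces to independence of the cutting presentation.

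First I would record that $F_\intL$ is not merely monoidal but ribbon: all duality, braiding, and twist morphisms of $\calR_\intL$ are blue, and on blue morphisms $F_\intL$ restricts to the Reshetikhin--Turaev functor $F_\calC$, which preserves this structure. Since the left and right partial traces are assembled from duality morphisms, $F_\intL$ commutes with them, i.e. $F_\intL(\rptr(h)) = \rptr(F_\intL(h))$ and $F_\intL(\lptr(h)) = \lptr(F_\intL(h))$.

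The heart of the argument is the comparison of two cutting presentations. Given $T_V$, the equation $\tr_{\calR_\intL}(T_V) = T$ distinguishes in $T$ the blue arc $a$ (colored by $V \in \calI$) along which the closure is performed, and cutting $T$ along this specified arc recovers $T_V$ up to isotopy, since cutting is inverse to closing. So let $T_V$ and $T_W$ be two cutting presentations, determined by arcs $a$ and $a'$ colored by $V, W \in \calI$. If $a = a'$ then necessarily $V = W$ and $T_V$, $T_W$ are isotopic, so there is nothing to prove. Otherwise I would exhibit a single presentation of $T$ displaying both arcs and cut along both at once, producing an endomorphism $g$ of $(+,V) \otimes (+,W)$ in $\calR_\intL$ such that tracing out the second (resp.\ first) strand recovers $T_V$ (resp.\ $T_W$). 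Using that $F_\intL$ is ribbon, that $\calI$ is a tensor ideal so that $V \otimes W \in \calI$, and the partial-trace properties (2R) and (2L) of $\rmt$, I obtain
\[
 \rmt_V(F_\intL(T_V)) = \rmt_V(\rptr(F_\intL(g))) = \rmt_{V \otimes W}(F_\intL(g)) = \rmt_W(\lptr(F_\intL(g))) = \rmt_W(F_\intL(T_W)),
\]
the desired equality. Since any two cutting presentations can be compared in this way, $F'_{\intL,\rmt}(T)$ is independent of all choices, hence an invariant of the isotopy class of $T$.

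I expect the main obstacle to be the topological step of realizing two cutting presentations as partial traces of a single double-cut morphism $g$: one must present $T$ so that both closing arcs are simultaneously visible and check that closing each strand reproduces the corresponding $T_V$ or $T_W$ up to isotopy, keeping careful track of orientations and framings at the cuts. Here the equivalence of the axioms (2R) and (2L), noted just after their statement since $\calC$ is ribbon, provides the freedom to match the left and right closures; the remaining algebraic content is exactly the short computation displayed above.
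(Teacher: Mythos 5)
Your proposal is correct and follows essentially the same route as the paper: the paper also compares two cutting presentations $T_V$ and $T_W$ by exhibiting a single endomorphism $T_{V,W}$ of $((+,V),(+,W))$ whose right and left partial traces recover $T_V$ and $T_W$, and then concludes by the cyclicity/partial-trace axioms of the modified trace, exactly as in your displayed chain of equalities. The extra points you make explicit --- that isotopy invariance is built into the definition of $\calR_\intL$ and that $F_\intL$ commutes with partial traces because it restricts to $F_\calC$ on blue morphisms --- are left implicit in the paper (which simply refers to the analogous argument in \cite{GPT07}), but they are the correct justifications.
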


\begin{proof}
 The proof is similar to the one provided by \cite{GPT07}. Indeed, if $T_V$ and $T_W$ are two different cutting presentations of $T$, we can find an endomorphism $T_{V,W}$ of $((+,V),(+,W))$ such that 
 \[
  \rptr(T_{V,W}) = T_V, \qquad \lptr(T_{V,W}) = T_W.
 \]
 Roughly speaking, if $T_V$ and $T_W$ are obtained from $T$ by cutting at two different edges, then $T_{V,W}$ is obtained by cutting at both edges. Then the properties of the modified trace imply 
 \[
  \rmt_V (F_\intL(T_V)) = \rmt_{V \otimes W} (F_\intL(T_{V,W})) = \rmt_W (F_\intL(T_W)). \qedhere
 \]
\end{proof}

We call $F'_{\intL,\rmt}$ the \textit{renormalized invariant of admissible closed bichrome graphs}. Recall that $\intL$ is unique up to scalar, and $F'_{\intL,\rmt}$ depends both on the choice of the normalization of $\intL$ and on the choice of $\rmt$. Let us illustrate with an example the difference between $F_\intL$ and $F'_{\intL,\rmt}$ in the case $\calI = \Proj(\calC)$: if $T$ denotes the admissible closed blue graph
\begin{equation}\label{E:renormalized_invariant_example}
 \raisebox{-0.5\height}{\includegraphics{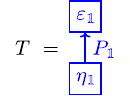}}
\end{equation}
then we have
\[
 F_\intL(T) = \epsilon_{\one} \circ \eta_{\one} = 0,
\]
while
\[
 F'_{\intL,\rmt}(T) = \rmt_{P_{\one}}(\eta_{\one} \circ \epsilon_{\one}) \neq 0,
\]
because $\calC(P_{\one},\one)$ is generated by $\epsilon_{\one}$, $\calC(\one,P_{\one})$ is generated by $\eta_{\one}$, and $\rmt$ is non-degenerate, as stated in Proposition \ref{P:uniqueness_of_trace}.

\begin{proposition}\label{P:FF'}
 If $T$ and $T'$ are closed bichrome graphs and $T'$ is admissible, then 
 \[
  F'_{\intL,\rmt}(T \otimes T') = F_\intL(T) F'_{\intL,\rmt}(T').
 \]
\end{proposition}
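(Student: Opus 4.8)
The plan is to compute $F'_{\intL,\rmt}(T \otimes T')$ from a cutting presentation of $T'$ in which $T$ appears as a disjoint closed component, and then to peel off the scalar $F_\intL(T)$ using monoidality of $F_\intL$ together with linearity of the modified trace.

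First I would fix a cutting presentation $T'_V$ of $T'$, that is, an endomorphism of $(+,V)$ in $\calR_\intL$ with $V$ an object of the chosen ideal and $\tr_{\calR_\intL}(T'_V) = T'$; such a presentation exists precisely because $T'$ is admissible. Since $T$ is closed, it is an endomorphism of the monoidal unit of $\calR_\intL$, so the disjoint union $T \otimes T'_V$ is again an endomorphism of $(+,V)$. The trace $\tr_{\calR_\intL}$ closes only the open $V$-strand of $T'_V$ and leaves the closed component $T$ untouched, whence $\tr_{\calR_\intL}(T \otimes T'_V) = T \otimes \tr_{\calR_\intL}(T'_V) = T \otimes T'$. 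Because $T'$ contains a blue edge colored by an object of the ideal, so does $T \otimes T'$, which is therefore admissible; thus $T \otimes T'_V$ is a cutting presentation of $T \otimes T'$, and Theorem \ref{T:admissible_closed_graph_invariant} permits me to evaluate $F'_{\intL,\rmt}(T \otimes T')$ on it.

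With this in hand the computation is purely formal. By definition $F'_{\intL,\rmt}(T \otimes T') = \rmt_V(F_\intL(T \otimes T'_V))$, and monoidality of $F_\intL$ (Proposition \ref{P:LRT_functor}) gives $F_\intL(T \otimes T'_V) = F_\intL(T) \otimes F_\intL(T'_V)$. As $T$ is closed, $F_\intL(T)$ lies in $\End_\calC(\one) = \Bbbk\,\id_{\one}$, hence is a scalar, and under the unit identification $\one \otimes V \cong V$ the morphism $F_\intL(T) \otimes F_\intL(T'_V)$ is the scalar multiple $F_\intL(T) \cdot F_\intL(T'_V) \in \End_\calC(V)$. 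Linearity of $\rmt_V$ then yields $\rmt_V(F_\intL(T) \cdot F_\intL(T'_V)) = F_\intL(T)\,\rmt_V(F_\intL(T'_V)) = F_\intL(T)\,F'_{\intL,\rmt}(T')$, which is the claimed identity.

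The only point requiring any care is the compatibility asserted in the second paragraph, namely that forming the disjoint union with the closed graph $T$ commutes with the trace closure of $\calR_\intL$. This is immediate once one recalls that $\tr_{\calR_\intL}$ is the ribbon trace inherited from $\calR_\calC$ and acts only by closing up the single open strand, leaving disjoint closed components of the graph intact; no nontrivial calculation is involved. Everything else reduces to the monoidality of $F_\intL$ and the $\Bbbk$-linearity of $\rmt$.
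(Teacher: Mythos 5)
Your argument is correct and follows exactly the route of the paper's own proof: pass to the cutting presentation $T \otimes T'_V$ of $T \otimes T'$, use monoidality of $F_\intL$ to factor out the scalar $F_\intL(T) \in \End_\calC(\one) \cong \Bbbk$, and conclude by linearity of $\rmt_V$. The additional checks you spell out (that $\tr_{\calR_\intL}$ only closes the open strand, and that $T \otimes T'$ inherits admissibility from $T'$) are exactly the points the paper leaves implicit.
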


\begin{proof}
 If $T'_V$ is a cutting presentation of $T'$, then $T \otimes T'_V$ is a cutting presentation of $T \otimes T'$, and the proposition follows from the fact that 
 \[
  F_\intL(T \otimes T'_V) = F_\intL(T)F_\intL(T'_V). \qedhere
 \]
\end{proof}

Next, let us state a couple of key properties of the functor $F_\intL$ which will be crucial for the construction.

\begin{proposition}\label{P:orient} 
 If $T$ is a morphism of $\calR_\intL$, if $K$ is a red cycle of $T$ which does not intersect bichrome coupons, and if $\tilde{T}$ denotes the morphism of $\calR_\intL$ obtained by reversing the orientation of $K$, then
 \[
  F_\intL(T) = F_\intL(\tilde{T}).
 \]
 Similarly, if $T$ is also closed and admissible, then $F'_{\intL,\rmt}(T) = F'_{\intL,\rmt}(\tilde{T})$.
\end{proposition}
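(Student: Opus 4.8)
The plan is to reduce the statement to the effect of orientation reversal on a single tensor factor of the coend $\coend$, and then to invoke part (\textit{ii}) of Lemma \ref{L:identities}. First I would fix a complete $n$-bottom graph presentation $P$ of $T$, chosen so that the red cycle $K$ is the $j$th cycle for some $1 \le j \le n$ and so that the strand of the underlying bottom tangle opening this cycle runs along a red edge of $K$. This is possible precisely because $K$ consists of red edges and, by hypothesis, meets no bichrome coupon. Reversing the orientation of $K$ then corresponds to reversing the orientation of the $j$th strand of $P$; the resulting $n$-bottom graph $\tilde{P}$ satisfies $\pc(\tilde{P}) = \tilde{T}$, so it is a complete $n$-bottom graph presentation of $\tilde{T}$, and $F_\intL(\tilde{T})$ may be computed from it via \eqref{E:LRT_functor}. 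Since $\tilde{P}$ and $P$ agree away from the $j$th cycle, the task becomes comparing the two induced morphisms $f_\calC(\eta_{\tilde{P}})$ and $f_\calC(\eta_{P})$.

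The key step is to show that orientation reversal of the coupon-free cycle $K$ realizes exactly the insertion of the antipode $\antipL$ on the $j$th tensor factor of $\coend^{\otimes n}$. To see this I would compare the two sides after precomposing with $i_{X_1} \otimes \cdots \otimes i_{X_n} \otimes \id$, as in the defining equation \eqref{E:f_calC}. Reversing the orientation of the strand labelled $X_j$ turns it into a strand labelled $X_j^*$, and the graphical definition of the antipode in \eqref{E:antipode} is precisely the identity expressing $\antipL \circ i_{X_j}$ in terms of $i_{X_j^*}$ together with duality and braiding morphisms; the fact that $K$ carries no bichrome coupon is exactly what guarantees that the reversal affects only this one coend factor and introduces no $i_{X_j} \leftrightarrow j_{X_j}$ swaps elsewhere. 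By the universal property of $\coend^{\otimes n}$, this yields
\[
 f_\calC(\eta_{\tilde{P}}) = f_\calC(\eta_{P}) \circ \bigl( (\id_\coend^{\otimes(j-1)} \otimes \antipL^{\pm 1} \otimes \id_\coend^{\otimes(n-j)}) \otimes \id \bigr),
\]
the sign depending on the chosen convention for orientation reversal. Composing with $\intL^{\otimes n} \otimes \id$ and using $\antipL \circ \intL = \intL$ from part (\textit{ii}) of Lemma \ref{L:identities} (which also gives $\antipL^{-1} \circ \intL = \intL$) collapses the $j$th factor back to $\intL$, so that $F_\intL(\tilde{T}) = F_\intL(T)$.

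For the renormalized invariant I would pick a cutting presentation $T_V$ of $T$, where $V \in \calI$ labels a \emph{blue} edge; since $K$ is entirely red, the cut is disjoint from $K$, so reversing $K$ commutes with cutting and produces a cutting presentation $\tilde{T}_V$ of $\tilde{T}$. The equality just established applied to the open morphisms gives $F_\intL(\tilde{T}_V) = F_\intL(T_V)$, whence $F'_{\intL,\rmt}(\tilde{T}) = \rmt_V(F_\intL(\tilde{T}_V)) = \rmt_V(F_\intL(T_V)) = F'_{\intL,\rmt}(T)$, with well-definedness guaranteed by Theorem \ref{T:admissible_closed_graph_invariant}. I expect the main obstacle to be the second step: carefully identifying orientation reversal of the cut strand with precomposition by $\antipL$ on the correct coend factor, since this requires tracking the duality and braiding morphisms in \eqref{E:antipode} against the pivotal coherence isomorphisms $X \cong X^{**}$ and checking that no framing or twist contributions are produced by the reversal of the framed loop $K$.
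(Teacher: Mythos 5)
Your proposal is correct and follows essentially the same route as the paper: both reduce orientation reversal of the coupon-free red cycle, via a complete $n$-bottom graph presentation and the graphical definition of the antipode in Equation \eqref{E:antipode}, to precomposition with $\antipL$ on the corresponding coend factor, and then conclude from $\antipL \circ \intL = \intL$ in part (\textit{ii}) of Lemma \ref{L:identities}. The paper packages the key step as a general statement about dinatural transformations $H_{V,W} \din Z$ (Figure \ref{F:independence_of_orientation}), but this is exactly the comparison of $f_\calC(\eta_{\tilde{P}})$ with $f_\calC(\eta_P)$ that you describe.
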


\begin{remark}\label{R:orient}
 In Section \ref{S:pairing}, we need a slightly more general version of Proposition \ref{P:orient}. Indeed, both $F_\intL$ and $F'_{\intL,\rmt}$ are actually invariant under orientation reversal of arbitrary red cycles, regardless of their intersection with bichrome coupons. Of course, we need to say what it means to reverse the orientation of a red cycle which is not disjoint from bichrome coupons. This is explained in Figure \ref{F:orientation_reversal_coupons} using the morphisms $\tilde{k} : \coend^* \to \eend$ and $\tilde{\ell} : \coend \to \eend^*$ defined by Equations \eqref{E:tilde_k} and \eqref{E:tilde_ell}. The proof of the general statement is identical to the proof of Proposition \ref{P:orient}.
\end{remark}

\begin{figure}[h]
 \centering
 \includegraphics{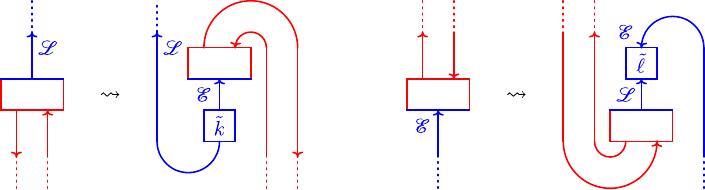}
 \caption{Orientation reversal of bichrome coupons.}
 \label{F:orientation_reversal_coupons}
\end{figure}

\begin{proposition}\label{P:slide}
 If $T$ is a morphism of $\calR_\intL$, if $K$ is a red cycle of $T$ which does not intersect bichrome coupons, and if $\tilde{T}$ denotes the morphism of $\calR_\intL$ obtained by sliding\footnote{See (the plat closure of) Figure~\ref{F:edge_slide} for the operation of edge slide.} a blue or a red edge of $T$ over $K$, then
 \[
  F_\intL(T) = F_\intL(\tilde{T}).
 \]
 Similarly, if $T$ is also closed and admissible, then $F'_{\intL,\rmt}(T) = F'_{\intL,\rmt}(\tilde{T})$.
\end{proposition}

Since the proof of Propositions \ref{P:orient} and \ref{P:slide} is very similar in spirit to the proof of Proposition \ref{P:LRT_functor}, we will postpone it to Appendix \ref{S:proof_orient_slide}. Now, recall the stabilization coefficients $\Delta_+$ and $\Delta_-$ of Definition \ref{D:twist_non-degenerate}. Assuming $\calC$ is twist non-degenerate, meaning $\Delta_+ \Delta_- \neq 0$, we can fix coefficients $\calD, \delta \in \Bbbk^*$ satisfying
\[
 \calD^2 = \Delta_+ \Delta_-, \qquad \delta = \frac{\calD}{\Delta_-} = \frac{\Delta_+}{\calD}.
\]

\begin{theorem}\label{T:admissible_closed_3-manifold_invariant}    
 If $\calC$ is twist non-degenerate, if $M$ is a closed connected 3-manifold, and if $T$ is an admissible closed bichrome graph inside $M$, then
 \[
  \rmL'_{\calC,\Tideal}(M,T) := \calD^{-1 - \ell} \delta^{- \sigma(L)} F'_{\intL,\rmt}(L \cup T)
 \]
 is a topological invariant of the pair $(M,T)$, with $L$ being a surgery presentation of $M$ given by a red $\ell$-component link of signature $\sigma(L)$ inside $S^3$.
\end{theorem}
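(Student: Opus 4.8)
The plan is to establish invariance by means of the surgery calculus. I would present the pair $(M,T)$ by a red framed link $L \subset S^3$ whose complement contains the graph $T$, so that $L \cup T$ is a closed bichrome graph in $S^3$ with $L$ red and $T$ admissible, and then show that the right-hand side of the claimed formula is unchanged under every move relating two such presentations of $(M,T)$. By the relative version of the theorem of Kirby and Fenn--Rourke (the graph $T$ being kept fixed as a subset of $M$), any two surgery presentations of $(M,T)$ are connected by a finite sequence of: (i) isotopies of $L \cup T$ in $S^3$; (ii) handle slides of a red component of $L$ over another red component; (iii) blow-ups and blow-downs adding or deleting a $\pm 1$-framed red unknot split from the rest of $L \cup T$; and (iv) isotopies of $T$ inside $M$ that slide a blue edge of $T$ across a red component of $L$. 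It therefore suffices to check invariance of $\rmL'_{\calC,\Tideal}$ under (i)--(iv). Move (i) changes neither $\ell$, nor $\sigma(L)$, nor the isotopy class of $L \cup T$, so invariance is immediate from Theorem \ref{T:admissible_closed_graph_invariant}.

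For the handle slides in (ii) and the slides of $T$ in (iv), note that $\ell$ is preserved and that a handle slide acts on the linking matrix by a congruence, hence preserves its signature $\sigma(L)$ by Sylvester's law of inertia; the normalization factor $\calD^{-1-\ell}\delta^{-\sigma(L)}$ is thus unchanged, and one only needs $F'_{\intL,\rmt}(L \cup T)$ to be invariant. This is exactly the content of Proposition \ref{P:slide}, which covers both a red edge and a blue edge sliding over a red cycle, while Proposition \ref{P:orient} takes care of the orientation of the parallel pushoff prescribed by the framing.

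The essential computation is the blow-up move (iii). Adding a split $\pm 1$-framed red unknot $U_\pm$ sends $\ell \mapsto \ell+1$ and $\sigma(L) \mapsto \sigma(L) \pm 1$. Since $U_\pm$ is a separate closed component and $T$ is admissible, Proposition \ref{P:FF'} gives
\[
 F'_{\intL,\rmt}\bp{(L \cup U_\pm) \cup T} = F_\intL(U_\pm)\, F'_{\intL,\rmt}(L \cup T).
\]
Reading off $F_\intL(U_\pm)$ directly from the definition of the Lyubashenko--Reshetikhin--Turaev functor, the integral $\intL$ colours the single cycle by $\coend$, the $\pm 1$ framing contributes the $T$-transformation $\modT^{\pm 1}$, and closing the cycle yields the counit, so that
\[
 F_\intL(U_\pm) = \counitL \circ \modT^{\pm 1} \circ \intL = \Delta_\pm
\]
by Definition \ref{D:twist_non-degenerate}. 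Combining these with the change of prefactor, the new value equals $\calD^{-1}\delta^{\mp 1}\Delta_\pm$ times the old one, and the relations $\delta = \calD/\Delta_- = \Delta_+/\calD$ give $\calD^{-1}\delta^{\mp 1}\Delta_\pm = \Delta_+\Delta_-/\calD^2 = 1$ in both cases. Hence $\rmL'_{\calC,\Tideal}$ is invariant under (iii) as well.

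I expect the main obstacle to be topological rather than algebraic: namely, justifying the relative surgery calculus so that the fixed graph $T$ is genuinely respected, and translating Kirby move II faithfully into the slide of Proposition \ref{P:slide} --- in particular matching the framed pushoff with the correct orientation (which is why Proposition \ref{P:orient} is needed) and recognising that isotoping $T$ through the surgery region is precisely a blue-over-red slide already covered by Proposition \ref{P:slide}. Once these identifications are in place, the proof reduces to the two scalar checks above, the nontrivial one being the blow-up cancellation guaranteed by twist non-degeneracy.
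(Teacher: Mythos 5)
Your proposal is correct and follows essentially the same route as the paper: the paper's proof likewise reduces to the Reshetikhin--Turaev argument, invoking Proposition \ref{P:orient} for orientation independence, Proposition \ref{P:slide} for Kirby II moves, and the choice of the prefactor $\calD^{-1-\ell}\delta^{-\sigma(L)}$ (enabled by twist non-degeneracy) for Kirby I stabilizations. Your explicit blow-up computation via Proposition \ref{P:FF'} and the identity $F_\intL(U_\pm)=\counitL\circ\modT^{\pm1}\circ\intL=\Delta_\pm$ matches what the paper leaves implicit (and records separately in the proof of Corollary \ref{C:non-degeneracy}).
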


\begin{proof}
 The proof follows the argument of Reshetikhin and Turaev, showing that the quantity $\calD^{-1 - \ell} \delta^{- \sigma(L)} F'_{\intL,\rmt}(L \cup T)$ remains unchanged under orientation reversal of components of $L$ and under Kirby moves \cite{RT91}. First of all, Proposition \ref{P:orient} implies $F'_{\intL,\rmt}(L \cup T)$ is independent of the choice of the orientation of the surgery link $L$. Then, thanks to Proposition \ref{P:slide}, $F'_{\intL,\rmt}(L \cup T)$ is also invariant under handle slides, known as Kirby II moves\footnote{A Kirby II move corresponds to a special case of the edge slide operation, when the edge being slid is a red surgery component.}. Finally, the invariance of $\rmL'_{\calC,\Tideal}(M,T)$ under stabilizations, known as Kirby I moves\footnote{A Kirby I move corresponds to the creation or removal of a disjoint unknotted red surgery component of framing $\pm1$.}, follows from the choice of the normalization factor $\calD^{-1 - \ell} \delta^{- \sigma(L)}$, which is made possible by the twist non-degeneracy of $\calC$.
\end{proof}

We call $\rmL'_{\calC,\Tideal}$ the \textit{renormalized Lyubashenko invariant of admissible closed 3-manifolds}. Recall that $\intL$ is unique up to scalar, and $\rmL'_{\calC,\Tideal}$ depends both on the choice of the normalization of $\intL$ and on the choice of $\rmt$.

\begin{remark}
 The notation $F'_{\intL,\rmt}(L \cup T)$ is slightly abusive, because $T$ is actually contained in $M$. What we actually mean is that we have a diffeomorphism between $S^3(L)$ and $M$, and that $T$ can be isotoped to be inside the image of the exterior of $L$ in $S^3$ under this diffeomorphism. We can therefore pull back $T$ to an admissible closed bichrome graph inside $S^3$ which does not intersect $L$, and which we still denote with $T$.
\end{remark}

\begin{remark}\label{R:3-man_inv_ssi}
 When the category $\calC$ is semisimple then, thanks to Remark \ref{R:admissible_ssi}, $\rmL'_{\calC,\Tideal}$ recovers the standard Reshetikhin-Turaev invariant associated with $\calC$, see Section \ref{S:ssi}.
\end{remark}

The next result establishes a relation between our invariant $\rmL'_{\calC,\Tideal}$ and the standard Lyubashenko invariant $\rmL_\calC$, whose defining formula is obtained from the one of $\rmL'_{\calC,\Tideal}$ by replacing $F'_{\intL,\rmt}$ with $F_\intL$.

\begin{proposition}\label{P:connected_sum}
 If $\calC$ is twist non-degenerate, if $M$ and $M'$ are closed connected 3-manifolds, and if $T'$ is an admissible bichrome graph inside $M'$, then 
 \[
  \rmL'_{\calC,\Tideal}(M \# M',T') = \rmL_\calC(M) \rmL'_{\calC,\Tideal}(M',T').
 \]
\end{proposition}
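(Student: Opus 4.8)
The plan is to reduce the statement to the multiplicativity of $F'_{\intL,\rmt}$ recorded in Proposition~\ref{P:FF'}, using the standard fact that a connected sum of surgered manifolds is surgery on a split union. Concretely, I would first fix a surgery presentation $L$ of $M$ by a red link in $S^3$ with $\ell$ components and signature $\sigma(L)$, and a surgery presentation $L'$ of $M'$ with $\ell'$ components and signature $\sigma(L')$. Placing $L$ and $L'$ in two disjoint balls of $S^3$ exhibits the split (distant) union $L \sqcup L'$ as a surgery presentation of $M \# M'$; it has $\ell + \ell'$ components, and since its linking matrix is block diagonal one has $\sigma(L \sqcup L') = \sigma(L) + \sigma(L')$. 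By Theorem~\ref{T:admissible_closed_3-manifold_invariant} the invariant $\rmL'_{\calC,\Tideal}$ does not depend on the chosen presentation, so I am free to compute with this particular one.

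Next I would position the graph. Since $T'$ lies in $M'$, it can be isotoped, within the exterior of the surgery link, into the ball containing $L'$, so that it is disjoint from $L$. The closed bichrome graph in $S^3$ representing the pair $(M \# M',T')$ is then the disjoint union $L \sqcup (L' \cup T')$. Interpreting closed bichrome graphs as endomorphisms of the monoidal unit of $\calR_\intL$, disjoint union is exactly the tensor product, so this graph equals $L \otimes (L' \cup T')$. Crucially, the factor $L' \cup T'$ is admissible because $T'$ is, while the factor $L$ need not be admissible; this is precisely the asymmetric situation covered by Proposition~\ref{P:FF'}.

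I would then apply Proposition~\ref{P:FF'} to obtain
\[
 F'_{\intL,\rmt}\bigl(L \otimes (L' \cup T')\bigr) = F_\intL(L)\, F'_{\intL,\rmt}(L' \cup T').
\]
Substituting this into the definition of $\rmL'_{\calC,\Tideal}(M \# M',T')$ and using that the surgery link $L \sqcup L'$ has $\ell+\ell'$ components and signature $\sigma(L)+\sigma(L')$, the normalization prefactor $\calD^{-1-(\ell+\ell')}\delta^{-\sigma(L)-\sigma(L')}$ regroups into the product of the prefactor defining $\rmL_\calC(M)$, paired with $F_\intL(L)$, and the prefactor defining $\rmL'_{\calC,\Tideal}(M',T')$, paired with $F'_{\intL,\rmt}(L' \cup T')$. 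Combined with the factorization above, this yields the claimed identity $\rmL'_{\calC,\Tideal}(M \# M',T') = \rmL_\calC(M)\, \rmL'_{\calC,\Tideal}(M',T')$.

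The conceptual content is entirely contained in Proposition~\ref{P:FF'}; what remains are topological and bookkeeping checks. The main points are that the split union genuinely presents $M \# M'$ and that its signature is additive (immediate from the block-diagonal linking matrix), that $T'$ can be isotoped off $L$ into the $M'$-summand (where independence of presentation from Theorem~\ref{T:admissible_closed_3-manifold_invariant} is used), and that the powers of $\calD$ and $\delta$ collect exactly into the two defining prefactors. The last of these is the only delicate step, since it requires the constant offsets in the two prefactors to account precisely for the single $S^3$-normalization inherent in a connected sum; this is exactly what the additivity $\sigma(L \sqcup L') = \sigma(L) + \sigma(L')$ and the component count $\ell+\ell'$ guarantee.
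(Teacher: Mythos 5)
Your proof is exactly the paper's: the entire published argument is the one-liner ``apply Proposition~\ref{P:FF'} to a surgery presentation of $M \# M'$ given by the disjoint union of surgery presentations of $M$ and $M'$,'' and your write-up merely makes explicit the isotopy of $T'$ off $L$, the additivity of signature, and the component count. One caution on the step you yourself flag as delicate: if $\rmL_\calC$ literally carries the prefactor $\calD^{-1-\ell}$ obtained by replacing $F'_{\intL,\rmt}$ with $F_\intL$ in the definition of $\rmL'_{\calC,\Tideal}$, then the regrouping gives $\calD^{-1-\ell-\ell'}$ on the left but $\calD^{-2-\ell-\ell'}$ on the right, i.e.\ $\rmL'_{\calC,\Tideal}(M\#M',T') = \calD\,\rmL_\calC(M)\,\rmL'_{\calC,\Tideal}(M',T')$, so the stated identity requires $\rmL_\calC$ to be normalized with $\calD^{-\ell}$ (equivalently $\rmL_\calC(S^3)=1$) --- a convention issue inherited from the paper rather than a defect of your argument.
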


\begin{proof}
 It is enough to apply Proposition \ref{P:FF'} to a surgery presentation for $M \# M'$ given by the disjoint union of two surgery presentations for $M$ and $M'$.
\end{proof}

\FloatBarrier

\section{2+1-TQFTs}\label{S:TQFTs}

In this section we extend the topological invariants constructed in Section \ref{S:3-manifold_invariants} to TQFTs, and we provide an explicit characterization of the resulting state spaces. In order to do this, we will focus on the case of modular categories and ideals of projective objects. Let us start by gathering a short list of the ingredients we introduced up to now that will be used in the following:
\begin{enumerate}
 \item $\calC$ is a modular category, or in other words a finite factorizable ribbon category. In particular, $\calC$ is unimodular and twist non-degenerate, and the Drinfeld map $\DD : \coend \to \eend$ between coend $\coend$ and end $\eend$ is an isomorphism, see Propositions \ref{P:if_mod_then_non-deg_unimod} and \ref{prop:C-DD}.
 \item $\intL$ is a fixed integral of $\coend$, which uniquely determines both a cointegral $\cointL$ of $\coend$ satisfying $\cointL \circ \intL = \id_{\one}$ and a modularity parameter $\zeta \in \Bbbk^*$, see Lemma \ref{lem:coint-from-int}.
 \item $F_\intL$ is the Lyubashenko-Reshetikhin-Turaev functor associated with the ribbon category $\calC$ and with the integral $\intL$ of the coend $\coend$, see Proposition~\ref{P:LRT_functor}.
 \item $\calI = \Proj(\calC)$ is the ideal of projective objects of $\calC$. It admits a trace $\rmt$ which is non-degenerate and unique up to scalar, see Proposition \ref{P:uniqueness_of_trace}. We fix the normalization $\rmt_{P_{\one}}(\eta_{\one} \circ \epsilon_{\one}) = 1$, see Lemma \ref{lem:lambda-iota}.
 \item $F'_\intL = F'_{\intL,\rmt}$ is the renormalized invariant of admissible closed bichrome graphs associated with the ribbon category $\calC$ and with the trace $\rmt$ on $\Proj(\calC)$, see Theorem \ref{T:admissible_closed_graph_invariant}.
 \item $\rmL'_\calC = \rmL'_{\calC,\Tideal}$ is the renormalized Lyubashenko invariant of admissible closed 3-man\-i\-folds associated with $F'_\intL$, see Theorem \ref{T:admissible_closed_3-manifold_invariant}.
\end{enumerate}

\subsection{Algebraic state spaces}\label{S:pairing}

We start with the definition of a family of vector spaces which will be later identified with state spaces coming from the functorial extension of the invariant $\rmL'_\calC$. For every integer $g \geq 0$ and for every object $V \in \calC$ we consider vector spaces
\[
 \calX_{g,V} := \calC(P_{\one},\eend^{\otimes g} \otimes V), \quad
 \calX'_{g,V} := \calC(\coend^{\otimes g} \otimes V,\one).
\]
We define now a bilinear pairing $\brk{\cdot,\cdot}_{g,V} : \calX'_{g,V} \times \calX_{g,V} \to \Bbbk$ as follows: for every $f' \in \calX'_{g,V}$ and every $f \in \calX_{g,V}$ we set
\[
 \brk{f',f}_{g,V} :=
 \rmt_{P_{\one}} \left( \pic{pairing} \right),
\]
where $\eta_{\one} : \one \to P_{\one}$ is the morphism fixed in Lemma~\ref{lem:lambda-iota}. Let now $\rmX_{g,V}$ be the quotient of the vector space $\calX_{g,V}$ with respect to the right radical of the bilinear form $\langle \cdot , \cdot \rangle_{g,V}$, and similarly let $\rmX'_{g,V}$ be the quotient of the vector space $\calX'_{g,V}$ with respect to the left radical of the bilinear form $\langle \cdot , \cdot \rangle_{g,V}$. Then the pairing $\langle \cdot , \cdot \rangle_{g,V}$ induces a non-degenerate pairing 
\[
 \langle \cdot , \cdot \rangle_{g,V} : \rmX'_{g,V} \otimes \rmX_{g,V} \rightarrow \Bbbk. 
\]

\begin{lemma}\label{L:left-non-degeneracy}
 For every integer $g \geq 0$ and every object $V \in \calC$ we have $\rmX'_{g,V} = \calX'_{g,V}$. 
\end{lemma}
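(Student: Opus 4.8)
The statement to prove is that the left radical of the form $\brk{\cdot,\cdot}_{g,V}$ is trivial: any $f' \in \calX'_{g,V}$ pairing to zero against all of $\calX_{g,V}$ must vanish. My plan is to peel the modified trace and the Drinfeld map off the pairing, reducing the claim to a pure lifting property of the projective object $P_{\one}$.

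First I would reparametrize the left slot. Since $\calC$ is factorizable, the Drinfeld map $\DD$ is an isomorphism by Proposition \ref{prop:C-DD}, so precomposition with $(\DD^{-1})^{\otimes g} \otimes \id_V$ gives a linear isomorphism $\calX_{g,V} = \calC(P_{\one}, \eend^{\otimes g} \otimes V) \xrightarrow{\sim} \calC(P_{\one}, \coend^{\otimes g} \otimes V)$. Writing $h := ((\DD^{-1})^{\otimes g} \otimes \id_V) \circ f$, the pairing becomes simply $\brk{f',f}_{g,V} = \rmt_{P_{\one}}(\eta_{\one} \circ f' \circ h)$, and as $f$ ranges over $\calX_{g,V}$ the morphism $h$ ranges over all of $\calC(P_{\one}, \coend^{\otimes g} \otimes V)$. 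Next I would strip off the trace: for any such $h$ the composite $f' \circ h$ lies in $\calC(P_{\one}, \one)$, which is one-dimensional and spanned by $\epsilon_{\one}$. Writing $f' \circ h = c\,\epsilon_{\one}$ and using the normalization $\rmt_{P_{\one}}(\eta_{\one} \circ \epsilon_{\one}) = 1$, one gets $\rmt_{P_{\one}}(\eta_{\one} \circ f' \circ h) = c$. Hence $f'$ lying in the left radical is equivalent to $f' \circ h = 0$ for every $h \in \calC(P_{\one}, \coend^{\otimes g} \otimes V)$.

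It then remains to show that $f' \circ h = 0$ for all $h \colon P_{\one} \to \coend^{\otimes g} \otimes V$ forces $f' = 0$, which I would argue by contradiction. If $f' \neq 0$, then since $\one$ is simple the image of $f'$ is all of $\one$, so $f'$ is an epimorphism. As $P_{\one}$ is projective, the map $\epsilon_{\one} \colon P_{\one} \to \one$ lifts along this epimorphism: there exists $h \colon P_{\one} \to \coend^{\otimes g} \otimes V$ with $f' \circ h = \epsilon_{\one}$. Since $\epsilon_{\one} \neq 0$, this contradicts $f' \circ h = 0$. Therefore $f' = 0$, the left radical is trivial, and $\rmX'_{g,V} = \calX'_{g,V}$.

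The conceptual heart of the argument is this final lifting step: once the Drinfeld isomorphism and the modified trace have been removed, the whole statement rests on the projectivity of $P_{\one}$ combined with the simplicity of $\one$, which is exactly what makes every nonzero $f'$ an epimorphism. The only points that need care are the appeal to the normalization of $\rmt$ on the one-dimensional space $\calC(P_{\one},\one)$ used to justify stripping the trace, and the verification that $h$ indeed sweeps out all of $\calC(P_{\one},\coend^{\otimes g}\otimes V)$ under the $\DD^{-1}$ reparametrization. I expect no serious obstacle, as the lemma's content is entirely homological once these reductions are in place.
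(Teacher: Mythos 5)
Your proof is correct, but it substitutes a different key input for the one the paper uses. The paper's argument also reparametrizes through the Drinfeld isomorphism, but then it finishes in one stroke: since $f'\neq 0$ and $\one$ is simple, $\eta_{\one}\circ f'$ is a non-zero element of $\calC(\coend^{\otimes g}\otimes V,P_{\one})$, and the \emph{non-degeneracy} of the modified trace on $\Proj(\calC)$ (Proposition~\ref{P:uniqueness_of_trace}) immediately supplies some $\tilde f$ with $\rmt_{P_{\one}}(\eta_{\one}\circ f'\circ\tilde f)\neq 0$. You instead avoid invoking non-degeneracy of $\rmt$ as a pairing altogether: you use that $\calC(P_{\one},\one)$ is one-dimensional and spanned by $\epsilon_{\one}$ together with the fixed normalization $\rmt_{P_{\one}}(\eta_{\one}\circ\epsilon_{\one})=1$ to reduce membership in the left radical to the purely categorical condition $f'\circ h=0$ for all $h$, and then kill that by the projectivity of $P_{\one}$ combined with the simplicity of $\one$ (which makes any non-zero $f'$ epic, so $\epsilon_{\one}$ lifts along it). Both routes rest on the same structural facts in the end --- the paper's non-degeneracy statement is itself proved from unimodularity and projectivity --- but yours is more self-contained and makes explicit that only the single non-vanishing $\rmt_{P_{\one}}(\eta_{\one}\circ\epsilon_{\one})\neq 0$ is needed, while the paper's is shorter because it delegates the homological work to the cited result of \cite{GKP18}. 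The only cosmetic slips are that your reparametrization is a post-composition acting on the \emph{right} slot of the pairing, not a precomposition on the left slot; neither affects the argument.
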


\begin{proof}
 Let us consider a non-trivial vector $f' \in \calX'_{g,V}$, and let us show there exists a vector $f \in \calX_{g,V}$ satisfying $\brk{f',f}_{g,V} \neq 0$. First of all, the composition $\eta_{\one} \circ f'$ is a non-zero morphism of $\calC(\coend^{\otimes g} \otimes V,P_{\one})$. Then, since the trace $\rmt$ is non-degenerate, there exists a morphism $\tilde{f} \in \calC(P_{\one},\coend^{\otimes g} \otimes V)$ satisfying ${\rmt_{P_{\one}}(\eta_{\one} \circ f' \circ \tilde{f}) \neq 0}$. Therefore, we can simply set $f := (\DD^{\otimes g} \otimes \id_V) \circ \tilde{f}$.
\end{proof}

\begin{remark}
 The equality $\rmX_{g,V} = \calX_{g,V}$ holds for every integer $g \geq 0$ and every object $V \in \calC$ if and only if $\calC$ is semisimple.
\end{remark}

\begin{lemma}\label{L:comparison_of_pairings}
 For every $f' \in \calX'_{g,V}$ and every $f \in \calX_{g,V}$ the admissible bichrome graph $T_{f',f}$ represented in Figure \ref{F:comparison_of_pairings} satisfies
 \[
  F'_\intL \left( T_{f',f} \right) = \zeta^g \langle f' , f \rangle_{g,V}.
 \]
\end{lemma}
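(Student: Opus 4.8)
The plan is to compute the renormalized invariant $F'_\intL(T_{f',f})$ of the closed admissible bichrome graph $T_{f',f}$ directly from its construction, and to match the result against the formula defining the pairing $\brk{\cdot,\cdot}_{g,V}$. Since $T_{f',f}$ is built from the morphisms $f' \in \calX'_{g,V} = \calC(\coend^{\otimes g}\otimes V,\one)$ and $f \in \calX_{g,V} = \calC(P_{\one},\eend^{\otimes g}\otimes V)$, together with $g$ red cycles (one for each factor of $\coend$/$\eend$) and a blue edge colored by the projective object $P_{\one}$ that makes the graph admissible, I expect the picture in Figure \ref{F:comparison_of_pairings} to close up $f$ on top of $f'$ through the $g$ red cycles, with the blue $P_{\one}$-edge supplying the cut for the modified trace $\rmt_{P_{\one}}$. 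First I would produce a cutting presentation $(T_{f',f})_{P_{\one}}$ of $T_{f',f}$ obtained by cutting this blue edge, so that $F'_\intL(T_{f',f}) = \rmt_{P_{\one}}(F_\intL((T_{f',f})_{P_{\one}}))$ by Theorem \ref{T:admissible_closed_graph_invariant}.

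Next I would evaluate the endomorphism $F_\intL((T_{f',f})_{P_{\one}}) \in \End_\calC(P_{\one})$ by unpacking the definition \eqref{E:LRT_functor} of the LRT functor. Each of the $g$ red cycles carries a factor of the integral $\intL$, and the defining equation \eqref{E:f_calC} for $f_\calC(\eta_{\tilde T})$ together with the universal property of $\coend$ converts the $g$ red cycles into $g$ insertions of $\intL$ acting through the dinatural transformation $i^{\otimes g}$. The key technical point is to track how the end $\eend$, appearing through the coupons labeled $j_X$, relates to the coend $\coend$: here the Drinfeld map $\DD : \coend \to \eend$ enters. Concretely, I expect that evaluating $f$ (whose target is $\eend^{\otimes g}\otimes V$) against the closure through the red cycles produces, up to the Drinfeld map, the composite $\eta_{\one}\circ f' \circ ((\DD^{-1})^{\otimes g}\otimes\id_V)\circ f$ that appears inside $\rmt_{P_{\one}}$ in the definition of $\brk{f',f}_{g,V}$ — but decorated by $g$ copies of $\intL$ rather than the clean $\DD^{-1}$.

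The crucial computation is therefore to show that each red cycle contributes exactly a factor of the modularity parameter $\zeta$, accounting for the prefactor $\zeta^g$. This is where Lemma \ref{lem:coint-from-int}, giving $\pairL\circ(\intL\otimes\id_\coend) = \zeta\cointL$, and Lemma \ref{L:inverse_Drinfeld}, giving $\tilde D\circ\DD = \zeta\id_\coend$, should combine: evaluating a single red cycle built from $\intL$ around an $\eend$-strand produces the morphism $\tilde D$ of Lemma \ref{L:inverse_Drinfeld} (or its transpose through the pairing \eqref{E:inverse_Drinfeld_1}), which equals $\zeta\,\DD^{-1}$ since $\DD$ is invertible in the modular case. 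Applying this once per cycle yields $\zeta^g$ times the composite with $(\DD^{-1})^{\otimes g}$, so that
\[
 F_\intL\bigl((T_{f',f})_{P_{\one}}\bigr)
 = \zeta^g\,\eta_{\one}\circ f'\circ\bigl((\DD^{-1})^{\otimes g}\otimes\id_V\bigr)\circ f .
\]
Taking $\rmt_{P_{\one}}$ of both sides and comparing with the definition of $\brk{f',f}_{g,V}$ finishes the argument.

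The main obstacle will be the careful bookkeeping in the middle step: correctly identifying, from the graphical data of Figure \ref{F:comparison_of_pairings}, which strands are colored by $\eend$ versus $\coend$, and verifying that the single-cycle evaluation really produces precisely $\tilde D = \zeta\,\DD^{-1}$ rather than some variant involving the antipode $\antipL$ or a mirrored pairing $\tilde\pairL$. In other words, the delicate point is matching conventions between the coupon labels $i_X, j_X$ used in the definition of $F_\intL$, the morphisms $\tilde k,\tilde\ell$ of Remark \ref{R:orient}, and the explicit inverse Drinfeld map of Lemma \ref{L:inverse_Drinfeld}, so that no stray braiding, antipode, or twist factor is left unaccounted for. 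Once the single-cycle identity is pinned down, the factorization over the $g$ independent cycles and the final application of the modified trace are routine.
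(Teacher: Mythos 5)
Your proposal follows essentially the same route as the paper: cut along the blue $P_{\one}$-edge, observe that each of the $g$ red-cycle gadgets linking an $\eend$-output of $f$ to a $\coend$-input of $f'$ evaluates under $F_\intL$ to $\tilde{D} = \zeta\,\DD^{-1}$ via Lemma \ref{L:inverse_Drinfeld}, and conclude by comparing with the definition of $\brk{\cdot,\cdot}_{g,V}$. The convention-matching you flag as the delicate point is precisely what the paper's proof handles, by reversing the orientation of one red cycle via Remark \ref{R:orient} and invoking the definitions of $\copL$ and the mirrored pairing $\tilde{\pairL}$ to recognize the dashed-box subgraph of Figure \ref{F:comparison_of_pairings} as the morphism $\tilde{D}$.
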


\begin{figure}[t]
 \centering
 \includegraphics{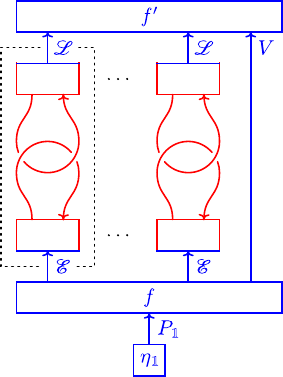}
 \caption{The admissible bichrome graph $T_{f',f}$.}
 \label{F:comparison_of_pairings}
\end{figure}

\begin{proof}
 Let $\tilde{T}$ denote the bichrome graph contained inside the dashed box in Figure~\ref{F:comparison_of_pairings}. Up to reversing the orientation of its bottom red cycle, which can be done thanks to Remark \ref{R:orient}, a complete $2$-bottom graph presentation of $\tilde{T}$ is represented in Figure \ref{F:bottom_graph_presentation}. Then we have
 \[
  F_\intL(\tilde{T}) = \zeta \DD^{-1}.
 \]
 Indeed, this follows from Lemma \ref{L:inverse_Drinfeld} using the definition of the coproduct $\copL$ given by Equation \eqref{E:coalgebra_structure} and of the mirrored pairing $\tilde{\pairL}$ given by Equation \eqref{E:mirrored_pairing}.
\end{proof}

\begin{figure}[ht]
 \centering
 \includegraphics{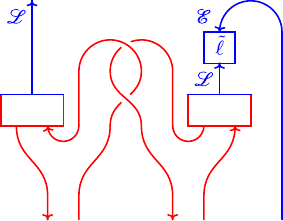}
 \caption{Complete $2$-bottom graph presentation of $\tilde{T}$.}
 \label{F:bottom_graph_presentation}
\end{figure}

\FloatBarrier

\subsection{Skein equivalence}\label{S:skein_equivalence}

We introduce now the concept of skein equivalence for morphisms of $\calR_\intL$. If $(\underline{\epsilon},\underline{V})$ and $(\underline{\epsilon'},\underline{V'})$ are objects of $\calR_\intL$ then we say two formal linear combinations $\sum_{i=1}^m \alpha_i \cdot T_i$ and $\sum_{i'=1}^{m'} \alpha'_{i'} \cdot T'_{i'}$ of morphisms in $\calR_\intL((\underline{\epsilon},\underline{V}),(\underline{\epsilon'},\underline{V'}))$ are \textit{skein equivalent} if
\[
 \sum_{i=1}^m \alpha_i \cdot F_\intL(T_i) = \sum_{i'=1}^{m'} \alpha'_{i'} \cdot F_\intL(T'_{i'}).
\]
Such a skein equivalence will be denoted
\[
 \sum_{i=1}^m \alpha_i \cdot T_i \doteq \sum_{i'=1}^{m'} \alpha'_{i'} \cdot T'_{i'}.
\]

\begin{lemma}[Cutting Lemma]\label{L:cutting}
 For every $V \in \Irr$ we have the skein equivalence
 \begin{equation}\label{E:cutting}
  \raisebox{-0.5\height}{\includegraphics{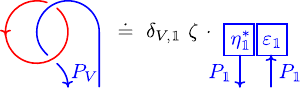}}
 \end{equation}
 where $\zeta \in \Bbbk^*$ is the modularity parameter of Lemma~\ref{lem:coint-from-int}, $\eta_{\one} : \one \to P_{\one}$ is the injection morphism of Lemma~\ref{lem:lambda-iota}, and $\epsilon_{\one} : P_{\one} \to \one$ is the canonical surjection of Section~\ref{S:unimodularity}.
\end{lemma}

\begin{proof}
 If $h_V$ denotes the image under $F_\intL$ of the morphism represented in the left-hand side of Equation \eqref{E:cutting}, then Equation \eqref{E:Hopf_pairing} gives
 \[
  h_V = \pairL \circ (\intL \otimes i_{P_V}).
 \]
 Now Lemma \ref{lem:coint-from-int} implies
 \[
  h_V = \zeta \cointL \circ i_{P_V},
 \]
 and, thanks to Lemma \ref{lem:lambda-iota}, we have
 \[
  h_V = \delta_{V,\one} \zeta \eta_{\one}^* \otimes \epsilon_{\one}. \qedhere
 \]
\end{proof}

\begin{figure}[b]
 \centering
 \includegraphics{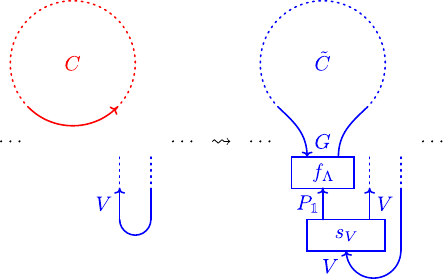}
 \caption{Red-to-blue operation. If $C$ intersects a bichrome coupon, then the resulting blue coupon of $\tilde{C}$ is labeled by either $i_G$ or $j_G$ according to its configuration, see Figure \ref{F:smoothing}.}
 \label{F:red_turns_blue}
\end{figure}

\begin{lemma}\label{L:red_turns_blue}
 If $G$ is a projective generator of $\calC$ and $V \in \Proj(\calC)$, then there exist morphisms $f_\intL \in \calC(P_{\one},G^* \otimes G)$ and $s_V \in \calC(V,P_{\one} \otimes V)$ satisfying 
 \[
  i_G \circ f_\intL = \intL \circ \epsilon_{\one}, \quad 
  (\epsilon_{\one} \otimes \id_V) \circ s_V = \id_V.
 \]
 Furthermore, if $T$ is an admissible morphism of $\calR_\intL$, if $C$ is a red cycle of $T$, and if $\tilde{T}$ denotes the admissible morphism of $\calR_\intL$ obtained by replacing $C$ with the blue graph $\tilde{C}$ represented in Figure \ref{F:red_turns_blue} for any choice of $f_\intL$ and $s_V$ as above, then
 \[
  F_\intL(T) = F_\intL(\tilde{T}).
 \] 
 Similarly, if $T$ is also closed, then $F'_\intL(T) = F'_\intL(\tilde{T})$.
\end{lemma}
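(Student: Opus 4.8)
The plan is to handle the lemma in three stages: first produce the two morphisms $f_\intL$ and $s_V$ by homological arguments, then establish the local graphical identity $F_\intL(T) = F_\intL(\tilde{T})$, and finally deduce the renormalized statement for closed $T$ from a cutting presentation.

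For the existence of $f_\intL$ I would use that for a projective generator $G$ the dinatural component $i_G \colon G^* \otimes G \to \coend$ is an epimorphism (the coend is a quotient of $G^* \otimes G$ when $G$ generates). Since $P_{\one}$ is projective, the morphism $\intL \circ \epsilon_{\one} \colon P_{\one} \to \coend$ lifts along $i_G$, producing $f_\intL \in \calC(P_{\one}, G^* \otimes G)$ with $i_G \circ f_\intL = \intL \circ \epsilon_{\one}$. For $s_V$, note that $\epsilon_{\one}$ is an epimorphism and $\otimes$ is exact, so $\epsilon_{\one} \otimes \id_V \colon P_{\one} \otimes V \to V$ is an epimorphism; since $V$ is projective, $\id_V$ lifts along it, giving a section $s_V$ with $(\epsilon_{\one} \otimes \id_V) \circ s_V = \id_V$.

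For the identity $F_\intL(T) = F_\intL(\tilde{T})$ I would argue locally, as $T$ and $\tilde{T}$ differ only in a neighborhood of the cycle $C$. Evaluating $F_\intL$ on the red cycle amounts, by formula \eqref{E:LRT_functor}, to plugging the integral $\intL$ into the coend slots attached to $C$ and inserting the dinatural components $i_X$ at the bichrome coupons met by $C$. On the $\tilde{T}$ side, the section relation $(\epsilon_{\one} \otimes \id_V) \circ s_V = \id_V$ lets the blue $V$-strand collapse to the relevant structure, while the relation $i_G \circ f_\intL = \intL \circ \epsilon_{\one}$ rewrites the blue $G$-colored configuration precisely as the integral insertion carried by the red loop. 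The two resulting morphisms then agree after composition with the family $i_{X}$, so they coincide by the universal property of $\coend$. I expect this local step to be the main obstacle: the delicate point is the bookkeeping of how $C$ threads through bichrome coupons and how $s_V$, $f_\intL$, and $\epsilon_{\one}$ are positioned so that dinaturality together with the two defining relations assemble into the exact equality, whereas the construction of the morphisms is routine.

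For the renormalized version with closed $T$, I would pick a cutting presentation $T_W$ along an admissible blue edge disjoint from $C$ and perform the same replacement to obtain a cutting presentation $\tilde{T}_W$ of $\tilde{T}$; note that $\tilde{T}$ is automatically admissible, since the new blue edge is colored by the projective object $V$. Applying the equality $F_\intL(T_W) = F_\intL(\tilde{T}_W)$ from the previous step and then the modified trace $\rmt_W$ gives $F'_\intL(T) = \rmt_W(F_\intL(T_W)) = \rmt_W(F_\intL(\tilde{T}_W)) = F'_\intL(\tilde{T})$.
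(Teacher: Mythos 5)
Your proposal matches the paper's proof in all essentials: the two morphisms are obtained exactly as you describe (epimorphy of $i_G$ from $G$ being a projective generator, epimorphy of $\epsilon_{\one} \otimes \id_V$ from exactness of $\otimes$, then the lifting property of the projectives $P_{\one}$ and $V$), and the graphical identity is proved by the same chain of substitutions — insert $(\epsilon_{\one} \otimes \id_V) \circ s_V = \id_V$, rewrite $\intL \circ \epsilon_{\one}$ as $i_G \circ f_\intL$, and conclude by dinaturality and the universal property of $\coend$ — which the paper packages as an identity for the induced morphism $f_\calC(\eta)$ of an auxiliary functor $H_{U,W}$. Your cutting-presentation argument for the closed case is the standard one the paper leaves implicit.
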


We postpone the proof of Lemma \ref{L:red_turns_blue} to Appendix \ref{S:proof_red_turns_blue}. We can now give an alternative proof of the twist non-degeneracy of $\calC$ which also relates the stabilization coefficients $\Delta_\pm$ to the modularity parameter $\zeta$.

\begin{corollary}\label{C:non-degeneracy}
 If $\calC$ is a modular category then $\Delta_- \Delta_+ = \zeta$.
\end{corollary}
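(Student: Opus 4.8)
The plan is to evaluate the renormalized Lyubashenko invariant $\rmL'_\calC$ of $S^3$, decorated with one fixed admissible closed blue graph, against two different surgery presentations of $S^3$, and to read off the desired identity from surgery invariance. Concretely, I would take the admissible closed blue graph $T_0$ of Equation \eqref{E:renormalized_invariant_example}, for which the chosen normalization of $\rmt$ gives $F'_\intL(T_0) = \rmt_{P_\one}(\eta_\one \circ \epsilon_\one) = 1$.

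The first, trivial presentation of $S^3$ is the empty surgery link, so that $\ell = 0$ and $\sigma = 0$; by the defining formula in Theorem \ref{T:admissible_closed_3-manifold_invariant} this yields $\rmL'_\calC(S^3,T_0) = \calD^{-1} F'_\intL(T_0) = \calD^{-1}$. The second presentation is the all-red $0$-framed Hopf link $H$. One first checks that $H$ presents $S^3$: its linking matrix $\left(\begin{smallmatrix} 0 & 1 \\ 1 & 0 \end{smallmatrix}\right)$ has determinant $-1$ and signature $0$, and the two $0$-framing relations identify each longitude with the other meridian, killing both meridians, so the result is a simply connected integral homology sphere, hence $S^3$, with $\ell = 2$ and $\sigma(H) = 0$. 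Placing $T_0$ in the complement of $H$ and applying Proposition \ref{P:FF'} to the admissible factor $T_0$, I get $\rmL'_\calC(S^3,T_0) = \calD^{-3} F_\intL(H)\, F'_\intL(T_0) = \calD^{-3} F_\intL(H)$.

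The key computation is $F_\intL(H) = \zeta$. Since each of the two red cycles of $H$ is evaluated by the integral $\intL$ and the single clasp realizes the Hopf pairing $\pairL$ of Equation \eqref{E:Hopf_pairing}, one has $F_\intL(H) = \pairL \circ (\intL \otimes \intL)$; composing the modularity identity $\pairL \circ (\intL \otimes \id_\coend) = \zeta \cointL$ of Lemma \ref{lem:coint-from-int} with $\intL$ in the remaining slot and using $\cointL \circ \intL = \id_\one$ gives $\pairL \circ (\intL \otimes \intL) = \zeta$. The choices of orientation and of crossing sign are harmless here: Proposition \ref{P:orient}, together with $\antipL \circ \intL = \intL$ from part $(\textit{ii})$ of Lemma \ref{L:identities} and the antipode identity \eqref{E:Hopf_2}, shows the value is unchanged. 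Comparing the two evaluations through the surgery invariance of Theorem \ref{T:admissible_closed_3-manifold_invariant} gives $\calD^{-1} = \calD^{-3} \zeta$, hence $\zeta = \calD^2$; since $\calD^2 = \Delta_+ \Delta_-$ by the choice of $\calD$ fixed just before Theorem \ref{T:admissible_closed_3-manifold_invariant}, I conclude $\Delta_- \Delta_+ = \zeta$.

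The main obstacle is the clean evaluation $F_\intL(H) = \zeta$: one must be certain that the all-red $0$-framed Hopf link contributes exactly $\pairL \circ (\intL \otimes \intL)$, with no stray twists or antipodes, and that the $0$-framed Hopf link is genuinely $S^3$ with vanishing signature, so that no compensating power of $\delta$ survives in the comparison. A secondary point worth stating explicitly is the absence of circularity: both the normalization $\calD^2 = \Delta_+\Delta_-$ and the surgery invariance of $\rmL'_\calC$ require only twist non-degeneracy, which holds for modular categories by Proposition \ref{P:if_mod_then_non-deg_unimod}, and nowhere presuppose the relation between $\Delta_\pm$ and $\zeta$ that we are proving.
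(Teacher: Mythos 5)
Your argument is correct, and it reaches the identity by a genuinely different route than the paper. The paper's proof (Figure~\ref{F:non-degeneracy}) is a diagrammatic skein computation carried out entirely at the level of $F_\intL$: a blue strand coloured by a projective object $V$ is threaded through a two-component red link, one red component is converted into a blue $G$-coloured circle by the red-to-blue operation of Lemma~\ref{L:red_turns_blue}, and the Cutting Lemma~\ref{L:cutting} together with the decomposition $\id_G = \sum_{V \in \Irr} \iota_{P_V} \circ \pi_{P_V}$ reduces the right-hand side to $\zeta \cdot \lcoev_V$, while the left-hand side is recognised, after a single handle slide, as a disjoint pair of $\pm1$-framed unknots evaluating to $\Delta_-\Delta_+ \cdot \lcoev_V$. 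You instead argue globally: you compare two surgery presentations of $S^3$ (the empty link and the $0$-framed Hopf link), decorated with the fixed admissible graph of Equation~\eqref{E:renormalized_invariant_example}, and evaluate the all-red Hopf link directly as $\pairL \circ (\intL \otimes \intL) = \zeta$ via Lemma~\ref{lem:coint-from-int}. Your route is shorter and bypasses both the Cutting Lemma and the red-to-blue operation, at the cost of invoking the full invariance statement of Theorem~\ref{T:admissible_closed_3-manifold_invariant} (i.e.\ Kirby calculus) rather than only elementary properties of $F_\intL$; as you correctly note, this creates no circularity, since that theorem rests only on twist non-degeneracy, which Proposition~\ref{P:if_mod_then_non-deg_unimod} supplies independently. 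The two points you flag as delicate --- that the $0$-framed Hopf link presents $S^3$ with vanishing signature, and that crossing-sign and orientation ambiguities in identifying the clasp with $\pairL$ are absorbed by $\antipL \circ \intL = \intL$ and Equation~\eqref{E:Hopf_3} --- are exactly the right ones to check, and your treatment of both is sound.
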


\begin{proof}
 
 \begin{figure}[t]
  \centering
   \includegraphics{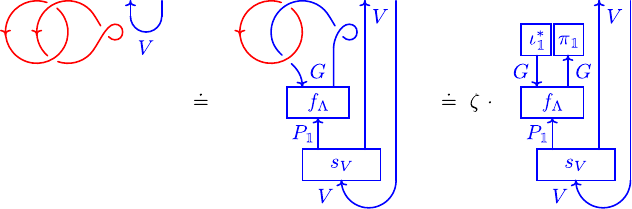}
   \caption{Skein equivalence witnessing $\Delta_- \Delta_+ = \zeta$.}
   \label{F:non-degeneracy}
 \end{figure}

 Let us choose for simplicity the projective generator $G$ determined by Equation \eqref{E:proj_gen} with $n_V = 1$ for all $V \in \Irr$. Then we can decompose $\id_G$ as
 \begin{equation}\label{E:id_G}
  \id_G = \sum_{V \in \Irr} \iota_{P_V} \circ \pi_{P_V}
 \end{equation}
 for some epimorphism $\pi_{P_V} \in \calC(G,P_V)$ and some monomorphism $\iota_{P_V} \in \calC(P_V,G)$ for every $V \in \Irr$. If $V \in \Proj(\calC)$ then, thanks to Equation \eqref{E:id_G} and Lemmas \ref{L:cutting} and \ref{L:red_turns_blue}, we have the sequence of skein equivalences of Figure \ref{F:non-degeneracy}, where
 \begin{equation}\label{E:pi_one_iota_one}
  \pi_{\one} := \epsilon_{\one} \circ \pi_{P_{\one}} \in \calC(G,\one), \qquad
  \iota_{\one} := \iota_{P_{\one}} \circ \eta_{\one} \in \calC(\one,G).
 \end{equation}

 On one hand, $F_\intL$ maps the left-hand side of Figure \ref{F:non-degeneracy} to $\Delta_- \Delta_+ \cdot \lcoev_V$. Indeed, the red link is obtained by sliding a $+1$-framed unknot over a $-1$-framed unknot, and $F_\intL$ is invariant under this operation thanks to Proposition \ref{P:slide}. Then Definition \ref{D:twist_non-degenerate}, together with the definition of the counit $\counitL$ in Equation~\eqref{E:coalgebra_structure}, implies the evaluation of $F_\intL$ against a $\pm1$-framed unknot is equal to $\Delta_\pm$.

 On the other hand, $F_\intL$ maps the right-hand side of Figure \ref{F:non-degeneracy} to $\zeta \cdot \lcoev_V$, because
 \begin{align*}
  (\iota_{\one}^* \otimes \pi_{\one}) \circ f_\intL 
  &= (\eta_{\one}^* \otimes \epsilon_{\one}) \circ (\iota_{P_{\one}}^* \otimes \pi_{P_{\one}}) \circ f_\intL \\*
  &= \cointL \circ i_{P_{\one}} \circ (\iota_{P_{\one}}^* \otimes \pi_{P_{\one}}) \circ f_\intL \\*
  &= \sum_{V \in \Irr} \cointL \circ i_{P_V} \circ (\iota_{P_V}^* \otimes \pi_{P_V}) \circ f_\intL \\*
  &= \sum_{V \in \Irr} \cointL \circ i_G \circ \left(\id_G^* \otimes (\iota_{P_V} \circ \pi_{P_V}) \right) \circ f_\intL \\*
  &= \cointL \circ i_G \circ f_\intL \\*
  &= \cointL \circ \intL \circ \epsilon_{\one} \\*
  &= \epsilon_{\one},
 \end{align*}
 where the first equality follows Equation \eqref{E:pi_one_iota_one}, the second and third ones from Lemma \ref{lem:lambda-iota}, the fourth one from dinaturality of $i$, the fifth one from Equation \eqref{E:id_G}, the sixth one from the definition of $f_\intL$ in Lemma \ref{L:red_turns_blue}, and the last one from Lemma \ref{L:identities} by recalling our assumption $\cointL \circ \intL = \id_{\one}$.
\end{proof}

\FloatBarrier

\subsection{Admissible cobordism category and universal construction}\label{S:admissible_cobordisms}

Following \cite[Sec.~3.3]{DGP17} closely, we first introduce a category $\adCob_{\calC}$ of admissible cobordisms, and then we apply the universal construction of \cite{BHMV95} to obtain a functorial extension of the renormalized Lyubashenko invariant $\rmL'_\calC$ given by Theorem \ref{T:admissible_closed_3-manifold_invariant}. Before starting, we need to extend a few definitions to a more general setting. A \textit{blue set $P$ inside a surface $\Sigma$} is a discrete set of blue points of $\Sigma$ endowed with orientations, framings, and labels given by objects of $\calC$. A \textit{bichrome graph $T$ inside a 3-dimensional cobordism $M$} is a bichrome graph embedded inside $M$ whose boundary vertices are given by blue sets inside the boundary of the cobordism. With this terminology in place, we can define the symmetric monoidal category $\Cob_{\calC}$.

An \textit{object $\bbSigma$ of $\Cob_{\calC}$} is a triple $(\Sigma,P,\lambda)$ where: 
\begin{enumerate}
 \item $\Sigma$ is a closed surface;
 \item $P \subset \Sigma$ is a blue set;
 \item $\lambda \subset H_1(\Sigma;\R)$ is a Lagrangian subspace.
\end{enumerate}

A \textit{morphism $\bbM : \bbSigma \rightarrow \bbSigma'$ of $\Cob_{\calC}$} is an equivalence class of triples $(M,T,n)$ where:
\begin{enumerate}
 \item $M$ is a 3-dimensional cobordism from $\Sigma$ to $\Sigma'$;
 \item $T \subset M$ is a bichrome graph from $P$ to $P'$;
 \item $n \in \Z$ is an integer called the \textit{signature defect}.
\end{enumerate}
Two triples $(M,T,n)$ and $(M',T',n')$ are equivalent if $n = n'$ and if there exists an isomorphism of cobordisms $f : M \rightarrow M'$ satisfying $f(T) = T'$.

The \textit{identity morphism $\id_{\bbSigma} : \bbSigma \rightarrow \bbSigma$ associated with an object $\bbSigma = (\Sigma,P,\lambda)$ of $\Cob_{\calC}$} is the equivalence class of the triple
\[
 (\Sigma \times I,P \times I,0).
\]

The \textit{composition $\bbM' \circ \bbM : \bbSigma \rightarrow \bbSigma''$ of morphisms $\bbM' : \bbSigma' \rightarrow \bbSigma''$, $\bbM : \bbSigma \rightarrow \bbSigma'$ of $\Cob_{\calC}$} is the equivalence class of the triple
\[
 \left( M \cup_{\Sigma'} M', T \cup_{P'} T', n + n' - \mu(M_*(\lambda),\lambda',M'^* (\lambda'')) \right)
\]
for the Lagrangian subspaces 
\begin{gather*}
 M_*(\lambda) := \{ x' \in H_1(\Sigma';\R) \mid (i_{M_+})_*(x') \in (i_{M_-})_* (\lambda) \} \subset H_1(\Sigma';\R) \\*
 M'^*(\lambda'') := \{ x' \in H_1(\Sigma';\R) \mid (i_{M'_-})_*(x') \in (i_{M'_+})_*(\lambda'') \} \subset H_1(\Sigma';\R).
\end{gather*}
where
\[
 i_{M_-} : \Sigma \hookrightarrow M, \quad i_{M_+} : \Sigma' \hookrightarrow M, \quad
 i_{M'_-} : \Sigma' \hookrightarrow M', \quad i_{M'_+} : \Sigma'' \hookrightarrow M'
\]
are the embeddings induced by the structure maps of $M$ and $M'$. Here $\mu$ denotes the Maslov index, see \cite{T94} for a detailed account of its properties.

The \textit{unit of $\Cob_{\calC}$} is the unique object whose surface is empty, and it will be denoted $\varnothing$. The \textit{tensor product $\bbSigma \disjun \bbSigma'$ of objects $\bbSigma$, $\bbSigma'$ of $\Cob_{\calC}$} is the triple 
\[
 (\Sigma \sqcup \Sigma',P \sqcup P',\lambda \oplus \lambda').
\]
The \textit{tensor product $\bbM \disjun \bbM' : \bbSigma \disjun \bbSigma' \rightarrow \bbSigma'' \disjun \bbSigma'''$ of morphisms $\bbM : \bbSigma \rightarrow \bbSigma''$, $\bbM' : \bbSigma' \rightarrow \bbSigma'''$ of $\Cob_{\calC}$} is the equivalence class of the triple
\[
 (M \sqcup M',T \sqcup T',n+n').
\]
It is straightforward to define dualities and trivial braidings which make $\Cob_\calC$ into a rigid symmetric monoidal category.

We will now construct a functor extending the renormalized Lyubashenko invariant $\rmL'_\calC$. Its domain however will not be the whole symmetric monoidal category $\Cob_{\calC}$, as there is no way of defining $\rmL'_\calC$ for every closed morphism of $\Cob_{\calC}$. Indeed, we will have to consider a strictly smaller subcategory.

\begin{definition}\label{D:adm_cob_cat}
 The \textit{admissible cobordism category} $\adCob_{\calC}$ is the symmetric mon\-oid\-al subcategory of $\Cob_{\calC}$ having the same objects but featuring only morphisms $\bbM = (M,T,n)$ which satisfy the following \textit{admissibility condition}:
\begin{quote}	
Every connected component of $M$ disjoint from the incoming boundary contains an admissible bichrome subgraph of $T$.
\end{quote}
\end{definition}

We can now extend the renormalized Lyubashenko invariant to closed morphisms of $\adCob_{\calC}$ by setting
\[
 \rmL'_\calC(\bbM) := \delta^n \rmL'_\calC(M,T)
\]
for every closed connected morphism $\bbM = (M,T,n)$, where $\delta \in \Bbbk^*$ is the coefficient fixed in Theorem~\ref{T:admissible_closed_3-manifold_invariant}, and then by setting
\[
 \rmL'_\calC(\bbM_1 \disjun \ldots \disjun \bbM_k) := \prod_{i=1}^k \rmL'_\calC(\bbM_i)
\]
for every tensor product of closed connected morphisms $\bbM_1, \ldots, \bbM_k$.

We apply now the universal construction of \cite{BHMV95}, which allows us to obtain a functorial extension of $\rmL'_\calC$. If $\bbSigma$ is an object of $\adCob_{\calC}$ then let $\calV(\bbSigma)$ be the free vector space generated by the set of morphisms $\bbM_{\bbSigma} : \varnothing \rightarrow \bbSigma$ of $\adCob_{\calC}$, and let $\calV'(\bbSigma)$ be the free vector space generated by the set of morphisms $\bbM'_{\bbSigma} : \bbSigma \rightarrow \varnothing$ of $\adCob_{\calC}$. Next, consider the bilinear form
\[
 \begin{array}{rccc}
  \langle \cdot , \cdot \rangle_{\bbSigma} : & \calV'(\bbSigma) \times \calV(\bbSigma) & \rightarrow & \Bbbk \\
  & (\bbM'_{\bbSigma},\bbM_{\bbSigma}) & \mapsto & \rmL'_\calC(\bbM'_{\bbSigma} \circ \bbM_{\bbSigma}).
 \end{array}
\]
Let $\rmV_{\calC}(\bbSigma)$ be the quotient of the vector space $\calV(\bbSigma)$ with respect to the right radical of the bilinear form $\langle \cdot , \cdot \rangle_{\bbSigma}$, and similarly let $\rmV'_{\calC}(\bbSigma)$ be the quotient of the vector space $\calV'(\bbSigma)$ with respect to the left radical of the bilinear form $\langle \cdot , \cdot \rangle_{\bbSigma}$. We will use the notation $[ {} \cdot {} ] : \calV(\bbSigma) \to \rmV_\calC(\bbSigma)$ and $[ {} \cdot {} ] : \calV'(\bbSigma) \to \rmV'_\calC(\bbSigma)$ for both projections. Note that the pairing $\langle \cdot , \cdot \rangle_{\bbSigma}$ induces a non-degenerate pairing 
\[
 \langle \cdot , \cdot \rangle_{\bbSigma} :  \rmV'_{\calC}(\bbSigma) \otimes \rmV_{\calC}(\bbSigma) \rightarrow \Bbbk. 
\]

Now if $\bbM : \bbSigma \rightarrow \bbSigma'$ is a morphism of $\adCob_{\calC}$, then let $\rmV_{\calC}(\bbM)$ be the linear map defined by
\[
 \begin{array}{rccc}
  \rmV_{\calC}(\bbM) : & \rmV_{\calC}(\bbSigma) & \rightarrow & \rmV_{\calC}(\bbSigma') \\
  & [\bbM_{\bbSigma}] & \mapsto & [\bbM \circ \bbM_{\bbSigma}],
 \end{array}
\]
and similarly let $\rmV'_{\calC}(\bbM)$ be the linear map defined by
\[
 \begin{array}{rccc}
  \rmV'_{\calC}(\bbM) : & \rmV'_{\calC}(\bbSigma') & \rightarrow & \rmV'_{\calC}(\bbSigma) \\
   & [\bbM'_{\bbSigma'}] & \mapsto & [\bbM'_{\bbSigma'} \circ \bbM].
 \end{array}
\]

The construction we just provided clearly defines functors
\[
 \rmV_{\calC} : \adCob_{\calC} \rightarrow \Vect_{\Bbbk}, \quad \rmV'_{\calC} : (\adCob_{\calC})^{\op} \rightarrow \Vect_{\Bbbk}.
\]

\begin{proposition}\label{lax_monoidality}
 The natural transformation $\mu : \otimes \circ (\rmV_\calC \times \rmV_\calC) \Rightarrow \rmV_\calC \circ \disjun$  associating with every pair of objects $\bbSigma$, $\bbSigma'$ of $\adCob_{\calC}$ the linear map 
 \[
  \begin{array}{rccc}
   \mu_{\bbSigma,\bbSigma'} : & \rmV_\calC(\bbSigma) \otimes \rmV_\calC(\bbSigma') & \rightarrow & 
   \rmV_\calC(\bbSigma \disjun \bbSigma') \\
   & [\bbM_{\bbSigma}] \otimes [\bbM_{\bbSigma'}] & \mapsto & [\bbM_{\bbSigma} \disjun \bbM_{\bbSigma'}]
  \end{array}
 \]
 is a monomorphism.
\end{proposition}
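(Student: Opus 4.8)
The plan is to deduce injectivity of $\mu_{\bbSigma,\bbSigma'}$ from the non-degeneracy of the induced pairings $\langle\cdot,\cdot\rangle_{\bbSigma}$, $\langle\cdot,\cdot\rangle_{\bbSigma'}$ and $\langle\cdot,\cdot\rangle_{\bbSigma\disjun\bbSigma'}$ recorded just above, together with the multiplicativity of $\rmL'_\calC$ under disjoint union. The crucial preliminary step is the following factorization of the pairing on split morphisms: for all $\bbM_\bbSigma,\bbM'_\bbSigma$ and $\bbM_{\bbSigma'},\bbM'_{\bbSigma'}$,
\[
 \langle \bbM'_\bbSigma \disjun \bbM'_{\bbSigma'},\, \bbM_\bbSigma \disjun \bbM_{\bbSigma'} \rangle_{\bbSigma\disjun\bbSigma'} = \langle \bbM'_\bbSigma,\bbM_\bbSigma\rangle_\bbSigma \cdot \langle \bbM'_{\bbSigma'},\bbM_{\bbSigma'}\rangle_{\bbSigma'}.
\]
To establish it I would use the interchange law of the symmetric monoidal category $\adCob_\calC$ to rewrite $(\bbM'_\bbSigma \disjun \bbM'_{\bbSigma'})\circ(\bbM_\bbSigma \disjun \bbM_{\bbSigma'})$ as $(\bbM'_\bbSigma\circ\bbM_\bbSigma)\disjun(\bbM'_{\bbSigma'}\circ\bbM_{\bbSigma'})$ --- which is a closed admissible morphism, since $\adCob_\calC$ is closed under composition and disjoint union --- and then invoke the defining multiplicativity $\rmL'_\calC(\bbM_1 \disjun \bbM_2) = \rmL'_\calC(\bbM_1)\rmL'_\calC(\bbM_2)$. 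As each factor on the right depends only on the quotient classes, the identity descends to $\rmV_\calC$ and $\rmV'_\calC$.

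With the factorization in place, let $z \in \rmV_\calC(\bbSigma)\otimes\rmV_\calC(\bbSigma')$ satisfy $\mu_{\bbSigma,\bbSigma'}(z)=0$. I would write $z = \sum_a [\bbN^a_\bbSigma]\otimes[\bbM^a_{\bbSigma'}]$ with the classes $[\bbN^a_\bbSigma]$ linearly independent in $\rmV_\calC(\bbSigma)$, as one always can. The hypothesis says that a representative $\sum_a \bbN^a_\bbSigma \disjun \bbM^a_{\bbSigma'}$ lies in the right radical of $\langle\cdot,\cdot\rangle_{\bbSigma\disjun\bbSigma'}$; pairing it against an arbitrary split morphism $\bbM'_\bbSigma\disjun\bbM'_{\bbSigma'}$ and using the factorization yields
\[
 \sum_a \langle \bbM'_\bbSigma,[\bbN^a_\bbSigma]\rangle_\bbSigma \cdot \langle \bbM'_{\bbSigma'},[\bbM^a_{\bbSigma'}]\rangle_{\bbSigma'} = 0
\]
for all $\bbM'_\bbSigma$ and all $\bbM'_{\bbSigma'}$.

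To finish, since the $[\bbN^a_\bbSigma]$ are linearly independent and $\langle\cdot,\cdot\rangle_\bbSigma$ is non-degenerate, for each fixed index $a_0$ I can choose a class in $\rmV'_\calC(\bbSigma)$, represented by some $\bbM'_\bbSigma$, with $\langle \bbM'_\bbSigma,[\bbN^a_\bbSigma]\rangle_\bbSigma = \delta_{a,a_0}$; the existence of such a dual system is the elementary fact that finitely many linearly independent vectors are separated by functionals once the pairing is non-degenerate on both sides. Substituting this choice collapses the sum to $\langle \bbM'_{\bbSigma'},[\bbM^{a_0}_{\bbSigma'}]\rangle_{\bbSigma'} = 0$ for every $\bbM'_{\bbSigma'}$, so $[\bbM^{a_0}_{\bbSigma'}]=0$ by non-degeneracy of $\langle\cdot,\cdot\rangle_{\bbSigma'}$. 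As $a_0$ is arbitrary, $z=0$, proving $\mu_{\bbSigma,\bbSigma'}$ is a monomorphism.

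I expect the one genuinely delicate point to be the factorization identity, and specifically the compatibility of the signature-defect data with disjoint union: the composition rule of $\Cob_\calC$ subtracts a Maslov index $\mu\bigl(M_*(\calL),\calL',M'^{*}(\calL'')\bigr)$, so the $\delta^n$ weighting in $\rmL'_\calC$ must be checked to remain multiplicative. This reduces to the additivity of the Maslov index under the orthogonal splitting $H_1(\Sigma\sqcup\Sigma';\R)=H_1(\Sigma;\R)\oplus H_1(\Sigma';\R)$ with Lagrangian $\calL\oplus\calL'$, a standard property; it is exactly what makes the interchange law of $\adCob_\calC$ compatible with the normalization of $\rmL'_\calC$. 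Everything after the factorization is routine linear algebra relying only on the non-degeneracy statements established before the proposition.
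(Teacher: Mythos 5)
Your injectivity argument is essentially the paper's: both proofs test a vector in the kernel of $\mu_{\bbSigma,\bbSigma'}$ against \emph{split} morphisms $\bbM'_{\bbSigma} \disjun \bbM'_{\bbSigma'}$, use multiplicativity of $\rmL'_\calC$ under disjoint union (equivalently, your factorization of the pairing), and conclude from non-degeneracy of the induced pairings on $\rmV_\calC(\bbSigma)$ and $\rmV_\calC(\bbSigma')$. You spell out the final linear-algebra step (writing $z$ with linearly independent first tensor factors and producing a dual system from the non-degenerate pairing) more explicitly than the paper does, which is welcome; your remark that the interchange law in $\Cob_\calC$ requires additivity of the Maslov index under the orthogonal splitting of $H_1(\Sigma \sqcup \Sigma';\R)$ is also correct, and is already built into the statement that $\Cob_\calC$ is symmetric monoidal.

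There is, however, a gap: you never check that $\mu_{\bbSigma,\bbSigma'}$ is well defined on the quotients (nor that $\mu$ is natural), and well-definedness does \emph{not} follow from your factorization identity. If $\sum_i \alpha_i \cdot \bbM_{\bbSigma,i}$ lies in the right radical of $\langle \cdot,\cdot \rangle_{\bbSigma}$, you must show that $\sum_i \alpha_i \cdot (\bbM_{\bbSigma,i} \disjun \bbM_{\bbSigma'})$ lies in the right radical of $\langle \cdot,\cdot \rangle_{\bbSigma \disjun \bbSigma'}$, and the latter radical is computed against \emph{arbitrary} morphisms $\bbM'_{\bbSigma \disjun \bbSigma'} : \bbSigma \disjun \bbSigma' \to \varnothing$, which in general do not split as disjoint unions. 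The paper handles this by absorbing the fixed factor into the test morphism, i.e.\ by writing $\bbM'_{\bbSigma \disjun \bbSigma'} \circ (\bbM_{\bbSigma,i} \disjun \bbM_{\bbSigma'}) = \bbM'_{\bbSigma} \circ \bbM_{\bbSigma,i}$ with $\bbM'_{\bbSigma} := \bbM'_{\bbSigma \disjun \bbSigma'} \circ (\id_{\bbSigma} \disjun \bbM_{\bbSigma'})$, and checking that this $\bbM'_{\bbSigma}$ is still a morphism of $\adCob_\calC$ (the admissibility of the glued components is inherited from that of $\bbM_{\bbSigma'}$). This absorption step is the one genuinely non-formal ingredient missing from your write-up; note that for injectivity split test morphisms suffice precisely because one only needs \emph{some} functionals that separate, whereas for well-definedness one must control \emph{all} of them.
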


\begin{proof}
 First of all, $\mu_{\bbSigma,\bbSigma'}$ is well-defined. Indeed, if $\sum_{i=1}^m \alpha_i \cdot [\bbM_{\bbSigma,i}]$ is a trivial vector in $\rmV_\calC(\bbSigma)$, then $\sum_{i=1}^m \alpha_i \cdot [\bbM_{\bbSigma,i} \disjun \bbM_{\bbSigma'}]$ is a trivial vector in $\rmV_\calC(\bbSigma \disjun \bbSigma')$ for every $\bbM_{\bbSigma'} \in \calV(\bbSigma')$, because for every $\bbM'_{\bbSigma \disjun \bbSigma'} \in \calV'(\bbSigma \disjun \bbSigma')$ we have
 \[
  \sum_{i=1}^m \alpha_i \rmL'_\calC(\bbM'_{\bbSigma \disjun \bbSigma'} \circ (\bbM_{\bbSigma,i} \disjun \bbM_{\bbSigma'})) = \sum_{i=1}^m \alpha_i \rmL'_\calC(\bbM'_{\bbSigma} \circ \bbM_{\bbSigma,i}) = 0,
 \]
 where $\bbM'_{\bbSigma} = \bbM'_{\bbSigma \disjun \bbSigma'} \circ (\id_{\bbSigma} \disjun \bbM_{\bbSigma'})$. The same holds when switching the roles of $\bbSigma$ and $\bbSigma'$. Next, $\mu_{\bbSigma,\bbSigma'}$ is natural. Indeed, this follows immediately from
 \[
  (\bbM \circ \bbM_{\bbSigma}) {}\disjun{} (\bbM' \circ \bbM_{\bbSigma'}) = (\bbM \disjun \bbM') \circ (\bbM_{\bbSigma} \disjun \bbM_{\bbSigma'}),
 \]
 which holds for all $\bbM_{\bbSigma} \in \calV(\bbSigma)$ and $\bbM_{\bbSigma'} \in \calV(\bbSigma')$, and for all $\bbM : \bbSigma \to \bbSigma''$ and $\bbM' : \bbSigma' \to \bbSigma'''$. Finally, $\mu_{\bbSigma,\bbSigma'}$ is injective. Indeed, if $\sum_{i=1}^m \alpha_i \cdot [\bbM_{\bbSigma,i} \disjun \bbM_{\bbSigma',i}]$ is a trivial vector in $\rmV_\calC(\bbSigma \disjun \bbSigma')$, then it satisfies
 \[
  \sum_{i=1}^m \alpha_i \rmL'_\calC(\bbM'_{\bbSigma \disjun \bbSigma'} \circ (\bbM_{\bbSigma,i} \disjun \bbM_{\bbSigma',i})) = 0
 \]
 for every $\bbM'_{\bbSigma \disjun \bbSigma'} \in \calV'(\bbSigma \disjun \bbSigma')$. In particular its pairing with every vector of the form $\bbM'_{\bbSigma} \disjun \bbM'_{\bbSigma'} \in \calV'(\bbSigma \disjun \bbSigma')$ for some $\bbM'_{\bbSigma} \in \calV'(\bbSigma)$ and $\bbM'_{\bbSigma'} \in \calV'(\bbSigma')$ must be zero too. This means $\sum_{i=1}^m \alpha_i \cdot [\bbM_{\bbSigma,i}] \otimes [\bbM_{\bbSigma',i}]$ is a trivial vector in ${\rmV_\calC(\bbSigma) \otimes \rmV_\calC(\bbSigma')}$.
\end{proof}

\begin{remark}\label{R:TQFT_ssi}
 Thanks to Remark \ref{R:3-man_inv_ssi}, when the category $\calC$ is semisimple $\rmV_\calC$ is a TQFT, and it is precisely the standard Reshetikhin-Turaev one.
\end{remark}

\subsection{Surgery axioms}\label{S:surgery_axioms}

We move on to study the behavior of $\rmL'_\calC$ under decorated index $k$ surgery for $k \in \{ 0,1,2 \}$. This is a crucial step in the proof of the symmetric monoidality of a functor produced by the universal construction, and the strategy dates back to \cite{BHMV95}. The specific version of these topological operations we need here was introduced in \cite[Sec.~3.4]{DGP17} in the context of factorizable Hopf algebras, but everything can be directly adapted to our setting. Indeed, for every $k \in \{ 0,1,2 \}$, we can consider an object $\bbSigma_k$ of $\Cob_\calC$ called the \textit{index $k$ surgery surface}, and morphisms $\bbA_k,\bbB_k : \varnothing \rightarrow \bbSigma_k$ of $\Cob_\calC$ called the \textit{index $k$ attaching tube} and the \textit{index $k$ belt tube} respectively. The closed surface $\varSigma_k$ of $\bbSigma_k$ is given by $S^{k-1} \times S^{3-k}$ with the convention $S^{-1} := \varnothing$. The cobordisms $A_k$ and $B_k$ of $\bbA_k$ and $\bbB_k$ are given by $S^{k-1} \times (-1)^{k-1} D^{4-k}$ and $D^k \times S^{3-k}$ respectively, with the convention $D^{0} := \{ 0 \}$. The blue set $P_{\Sigma_1}$ of $\bbSigma_1$ is given by $S^0 \times \{ (0,0,1) \}$ with orientation induced by $S^0$ and label $P_{\one}$, while all the other blue sets are empty. The blue graph $T_{B_0}$ of $\bbB_0$ coincides with the one represented in Equation \eqref{E:renormalized_invariant_example}; The blue graph $T_{A_1}$ of $\bbA_1$ is obtained from $T_{B_0}$ by cutting it along its only edge, and by embedding the top half into $\{ -1 \} \times \overline{D^3}$ and the bottom half into $\{ 1 \} \times D^3$, compare with \cite[Fig.~26]{DGP17}; The blue tangle $T_{B_1}$ of $\bbB_1$ is given by the edge $D^1 \times \{ (0,0,1) \}$, with orientation and label determined by $P_{\Sigma_1}$; The red knot $K_{A_2}$ of $\bbA_2$ is given by the core $S^1 \times \{ (0, 0) \}$; All the other bichrome graphs are empty. Lagrangians and signature defects coincide with those of \cite[Sec.~3.4]{DGP17}. Then, for $k \in \{0,1,2\}$ and for a morphism $\bbM_k : \bbSigma_k \rightarrow \varnothing$ of $\adCob_{\calC}$, the morphism $\bbM_k \circ \bbB_k$ is said to be obtained from $\bbM_k \circ \bbA_k$ by an \emph{index $k$ surgery}. The next statement describes the behavior of $\rmL'_\calC$ under this operation (compare with \cite[Prop.~3.12]{DGP17}), and its proof will occupy the remainder of this section.

\begin{proposition}\label{P:surgery_axioms}
 For $k \in \{0,1,2\}$ let $\bbM_k : \bbSigma_k \rightarrow \varnothing$ be a morphism of $\adCob_{\calC}$. If $\bbM_k \circ \bbA_k$ is in $\adCob_{\calC}$ then 
 \[
  \rmL'_\calC(\bbM_k \circ \bbB_k) = \alpha_k \rmL'_\calC(\bbM_k \circ \bbA_k)
 \]
 with $\alpha_0 = \alpha_1^{-1} = \alpha_2 = \calD^{-1}$.
\end{proposition}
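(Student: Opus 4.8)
The plan is to treat the three values of $k$ separately, in each case producing explicit surgery presentations of the two closed morphisms $\bbM_k \circ \bbA_k$ and $\bbM_k \circ \bbB_k$ that differ by a single local modification, and then reading off how the defining formula $\rmL'_\calC(M,T) = \calD^{-1-\ell}\delta^{-\sigma(L)}F'_{\intL,\rmt}(L\cup T)$, together with the correction $\rmL'_\calC(\bbM) = \delta^n \rmL'_\calC(M,T)$, changes. Since the target scalars $\lambda_k$ are pure powers of $\calD$, the idea is to isolate the change in the factor $\calD^{-1-\ell}$ coming from the number of red surgery components and the change in $F'_{\intL,\rmt}$ coming from the bichrome graph, while arguing that all $\delta$-contributions (those from $\sigma(L)$ and those from the signature defects $n$ and the Maslov corrections defined as in Section 3.4 of \cite{DGP17}) cancel in the ratio.

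For $k = 0$ the morphism $\bbA_0$ is empty and $\bbB_0 = (S^3, T_{B_0}, 0)$, so $\bbM_0 \circ \bbA_0 = \bbM_0$ and $\bbM_0 \circ \bbB_0 = \bbM_0 \disjun \bbB_0$. Using the empty surgery link for $S^3$ and the normalization $\rmt_{P_{\one}}(\eta_{\one}\circ\epsilon_{\one}) = 1$, one computes $\rmL'_\calC(\bbB_0) = \calD^{-1} F'_{\intL,\rmt}(T_{B_0}) = \calD^{-1}$, and multiplicativity of $\rmL'_\calC$ under disjoint union (which rests on Proposition \ref{P:FF'}) then gives $\rmL'_\calC(\bbM_0 \circ \bbB_0) = \calD^{-1}\rmL'_\calC(\bbM_0 \circ \bbA_0)$. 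For $k = 2$ the key observation is that the red core $K_{A_2}$ of the attaching tube $A_2$ is, from the point of view of $F'_{\intL,\rmt}$, indistinguishable from a red surgery component, since both are evaluated through the integral $\intL$. Thus if $L$ is a surgery presentation of the ambient manifold of $\bbM_2\circ\bbA_2$ with $\ell$ components, then $L \cup K_{A_2}$ is simultaneously a surgery presentation of the ambient manifold of $\bbM_2\circ\bbB_2$ (obtained from the former by surgery along $K_{A_2}$) with $\ell+1$ components, while the diagram fed to $F'_{\intL,\rmt}$ is literally unchanged. Only the prefactor $\calD^{-1-\ell}$ changes, by $\calD^{-1}$, so $\lambda_2 = \calD^{-1}$.

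The case $k=1$ is where the modified trace and the Cutting Lemma enter, and it accounts for the inverse power. Attaching the belt tube $B_1$ is an index-$1$ surgery realised, in a surgery presentation of $\bbM_1\circ\bbA_1$, by adjoining a $0$-framed red meridian $U$ encircling the blue $P_{\one}$-strand which now runs through the resulting handle, whereas in $\bbM_1\circ\bbA_1$ that strand is capped by $\eta_{\one}$ and $\epsilon_{\one}$ inside the two balls of $A_1$. Applying the Cutting Lemma (Lemma \ref{L:cutting}) with $V=\one$ to the red circle $U$ around the $P_{\one}$-strand simultaneously deletes the surgery component $U$ and replaces the strand by the cut configuration $\eta_{\one}^* \otimes \epsilon_{\one}$, which is precisely the graph of $\bbM_1\circ\bbA_1$, at the cost of the factor $\zeta$. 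Hence $F'_{\intL,\rmt}$ on the $B$-side equals $\zeta$ times that of the $A$-side while carrying one extra red component, so the net scalar is $\calD^{-1}\zeta = \calD^{-1}\calD^2 = \calD$, where $\zeta = \Delta_-\Delta_+ = \calD^2$ by Corollary \ref{C:non-degeneracy} and the defining relation $\calD^2 = \Delta_+\Delta_-$. This yields $\lambda_1 = \calD = \lambda_0^{-1}$.

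The main obstacle I anticipate is the $\delta$-bookkeeping: one must check that, for the surgery presentations chosen above, the extra red components can be taken $0$-framed and (after handle slides, which leave $F_\intL$ invariant by Proposition \ref{P:slide}) sufficiently unlinked that $\sigma(L)$ together with the signature defects $n$ and the Maslov terms produce cancelling powers of $\delta$, leaving only the advertised powers of $\calD$; this is the part that requires carefully importing the Lagrangian and signature-defect conventions of \cite{DGP17}. A secondary point, also to be settled by reducing to the purely local picture near the tubes, is that for $k=1$ the two boundary spheres of $\bbSigma_1$ may lie on the same or on different connected components of $\bbM_1$, so that the global topology of the index-$1$ surgery differs; since the Cutting Lemma computation above is entirely local, it applies uniformly in both situations.
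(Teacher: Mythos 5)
Your treatment of $k=0$, $k=2$, and the \emph{connected} sub-case of $k=1$ coincides with the paper's: the $k=0$ case is the normalization $\rmt_{P_{\one}}(\eta_{\one}\circ\epsilon_{\one})=1$ plus multiplicativity, the $k=2$ case is the observation that $K_{A_2}$ merely changes status from decoration to surgery component so only $\calD^{-1-\ell}$ moves, and the connected $k=1$ case is exactly the Cutting Lemma applied to a $0$-framed red meridian of the $P_{\one}$-strand, giving $\zeta\cdot\calD^{-1}=\calD$. The $\delta$-bookkeeping you flag as an obstacle is genuinely there but routine (e.g.\ $\sigma(L')=\sigma(L)$ for the added meridian, and for $k=2$ the belt tube's signature defect is adjusted by $\sigma(L\cup K_{A_2})-\sigma(L)$ precisely so that the powers of $\delta$ cancel); deferring it is acceptable.

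The genuine gap is your final claim that, for $k=1$, ``since the Cutting Lemma computation is entirely local, it applies uniformly'' whether the two spheres of $\bbSigma_1$ lie on the same or on different connected components. When they lie on different components, $\bbM_1\circ\bbA_1$ is a \emph{disjoint union} of two closed connected manifolds, so it has no single connected surgery presentation to which one could adjoin a meridian; its invariant is defined as the product $\rmL'_\calC(\bbM_1\circ\overline{\bbD^3_{\epsilon_{\one}}})\,\rmL'_\calC(\bbM'_1\circ\bbD^3_{\eta_{\one}})$, each factor carrying its own $\calD^{-1-\ell_i}$. The belt-side is the connected sum, presented by $L\sqcup L'$ with the uncut $P_{\one}$-strand and \emph{no} extra red component, so the mechanism producing the factor $\calD$ is entirely different: it is the mismatch between $\calD^{-1-\ell-\ell'}$ and $\calD^{(-1-\ell)+(-1-\ell')}$, combined with the identity $F'_\intL\bigl((L'\cup T')\circ(L\cup T)\bigr)=F'_\intL(L\cup\hat{T})\,F'_\intL(L'\cup\hat{T}')$. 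That identity is not a consequence of the Cutting Lemma or of any local skein move; it uses that $\calC(\one,P_{\one})$ and $\calC(P_{\one},\one)$ are one-dimensional, generated by $\eta_{\one}$ and $\epsilon_{\one}$, together with cyclicity of the modified trace and the normalization $\rmt_{P_{\one}}(\eta_{\one}\circ\epsilon_{\one})=1$. This disconnected sub-case is the one actually invoked in the proof of monoidality (Theorem \ref{T:monoidality}), so it cannot be waved through; you need to supply the separate argument above.
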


\begin{proof}
 If $k = 0$ then the property reduces to the computation
 \[
  \rmL'_\calC(\bbB_0) = \calD^{-1 - 0} \delta^{0 - 0} F'_\intL(T_{B_0}) = \calD^{-1} \rmt_{P_{\one}}(\eta_{\one} \circ \epsilon_{\one}) = \calD^{-1},
 \]
 where we use the normalization of $\rmt$ fixed at the beginning of Section \ref{S:TQFTs}.

 If $k = 1$ then we have two cases, according to whether or not the surgery involves two different connected components of the closed morphism $\bbM_1 \circ \bbA_1$. Let us start from the first case, and let us begin by decomposing $\bbSigma_1$ as a tensor product $\smash{\overline{\bbS^2_{(-,P_{\one})}}} \disjun \bbS^2_{(+,P_{\one})}$, where 
 \[
  \overline{\bbS^2_{(-,P_{\one})}} = \left( \overline{S^2},P_{(-,P_{\one})},\{ 0 \} \right), \qquad
  \bbS^2_{(+,P_{\one})} = \left( S^2,P_{(+,P_{\one})},\{ 0 \} \right),
 \]
 with $P_{(-,P_{\one})}$ and $P_{(+,P_{\one})}$ both featuring a single blue point with orientation and label specified by subscripts. Next, let us decompose $\bbA_1$ as a tensor product $\smash{\overline{\bbD^3_{\epsilon_{\one}}}} \disjun \bbD^3_{\eta_{\one}}$ with respect to morphisms $\smash{\overline{\bbD^3_{\epsilon_{\one}}} : \varnothing \rightarrow \overline{\bbS^2_{(-,P_{\one})}}}$ and $\bbD^3_{\eta_{\one}} :  \varnothing \rightarrow \bbS^2_{(+,P_{\one})}$, where 
 \[
  \overline{\bbD^3_{\epsilon_{\one}}} = \left( \overline{D^3},T_{\epsilon_{\one}},0 \right), \qquad
  \bbD^3_{\eta_{\one}} = \left( D^3,T_{\eta_{\one}},0 \right),
 \]  
 with $T_{\epsilon_{\one}}$ and $T_{\eta_{\one}}$ both featuring a single blue coupon with label specified by subscripts. Then, let us consider connected morphisms $\bbM_1 : \smash{\overline{\bbS^2_{(-,P_{\one})}}} \rightarrow \varnothing$ and $\bbM'_1 : \bbS^2_{(+,P_{\one})} \rightarrow \varnothing$ of $\adCob_{\calC}$. If $\bbM_1 = (M_1,T,n)$ and $\bbM'_1 = (M'_1,T',n')$ then
 \begin{gather*}
  \bbM_1 \circ \overline{\bbD^3_{\epsilon_{\one}}} = \left( \overline{D^3} \cup_{\overline{S^2}} M_1,T_{\epsilon_{\one}} \cup_{\overline{P_1}} T,n \right), \\*
  \bbM'_1 \circ \bbD^3_{\eta_{\one}} = \left( D^3 \cup_{S^2} M'_1,T_{\eta_{\one}} \cup_{P_1} T',n \right).
 \end{gather*}
 Let us set for convenience $\hat{T} := T_{\epsilon_{\one}} \cup_{\overline{P_1}} T$ and $\hat{T}' := T_{\eta_{\one}} \cup_{P_1} T'$. Then, if $L$ is an $\ell$-component surgery link for $\smash{M_1 \cup_{\overline{S^2}} \overline{D^3}}$, and if $L'$ is an $\ell'$-component surgery link for $M'_1 \cup_{S^2} D^3$, we have
 \begin{gather*}
  \rmL'_\calC(\bbM_1 \circ \overline{\bbD^3_{\epsilon_{\one}}}) = \calD^{-1-\ell} \delta^{n - \sigma(L)} F'_\intL \left( L \cup \hat{T} \right), \\*
  \rmL'_\calC(\bbM'_1 \circ \bbD^3_{\eta_{\one}}) = \calD^{-1-\ell'} \delta^{n' - \sigma(L')} F'_\intL \left( L' \cup \hat{T}' \right).
 \end{gather*}
 Now let us use the same notation $T_{\epsilon_{\one}}$ and $T_{\eta_{\one}}$ also for the unique morphisms ${T_{\epsilon_{\one}} : (+,P_{\one}) \rightarrow \varnothing}$ and $T_{\eta_{\one}} : \varnothing \rightarrow (+,P_{\one})$ of $\calR_\intL$ determined by the decorations of $\smash{\overline{\bbD^3_{\epsilon_{\one}}}}$ and $\bbD^3_{\eta_{\one}}$. This determines uniquely morphisms $L \cup T : \varnothing \rightarrow (+,P_{\one})$ and $L' \cup T' : \varnothing \rightarrow (+,P_{\one})$ of $\calR_\intL$ satisfying
 \[
  L \cup \hat{T} = T_{\epsilon_{\one}} \circ (L \cup T), \qquad
  L' \cup \hat{T}' = (L' \cup T') \circ T_{\eta_{\one}},
 \]
 or graphically
 \[
  \raisebox{-0.5\height}{\includegraphics{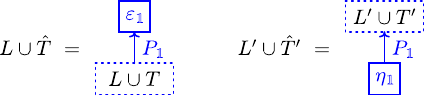}}
 \]
 Then $(L \cup T) \circ T_{\epsilon_{\one}}$ is a cutting presentation of $L \cup \hat{T}$, and $T_{\eta_{\one}} \circ (L' \cup T')$ is a cutting presentation of $L' \cup \hat{T}'$. This means
 \begin{gather*}
  F'_\intL \left( L \cup \hat{T} \right) = \rmt_{P_{\one}} \left( F_\intL \left( (L \cup T) \circ T_{\epsilon_{\one}} \right) \right), \\*
  F'_\intL \left( L' \cup \hat{T}' \right) = \rmt_{P_{\one}} \left( F_\intL \left( T_{\eta_{\one}} \circ (L' \cup T') \right) \right).
 \end{gather*}
 Furthermore,
 \begin{gather*}
  F_\intL \left( L \cup T \right) = \rmt_{P_{\one}} \left( F_\intL \left( (L \cup T) \circ T_{\epsilon_{\one}} \right) \right) \cdot \eta_{\one}, \\*
  F_\intL \left( L' \cup T' \right) = \rmt_{P_{\one}} \left( F_\intL \left( T_{\eta_{\one}} \circ (L' \cup T') \right) \right) \cdot \epsilon_{\one},
 \end{gather*}
 because $\calC(\one,P_{\one})$ and $\calC(P_{\one},\one)$ are 1-dimensional, generated by $\eta_{\one}$ and $\epsilon_{\one}$ respectively, and because $\rmt_{P_{\one}}(\eta_{\one} \circ \epsilon_{\one}) = 1$. This means
 \begin{align*}
  F'_\intL &\left( (L' \cup T') \circ (L \cup T) \right) \\*
  &= \rmt_{P_{\one}} \left( F_\intL \left( (L \cup T) \circ (L' \cup T') \right) \right) \\*
  &= \rmt_{P_{\one}} \left( F_\intL \left( (L \cup T) \circ T_{\epsilon_{\one}} \right) \right)
  \rmt_{P_{\one}} \left( F_\intL \left( T_{\eta_{\one}} \circ (L' \cup T') \right) \right)
  \rmt_{P_{\one}} \left( \eta_{\one} \circ \epsilon_{\one} \right) \\*
  &= F'_\intL ( L \cup \hat{T} ) F'_\intL ( L' \cup \hat{T}' ).
 \end{align*}
 But now
 \[
  \rmL'_\calC((\bbM_1 \disjun \bbM'_1) \circ \bbB_1) = \calD^{-1-\ell-\ell'} \delta^{n + n' - \sigma(L) - \sigma(L')} 
  F'_\intL \left( (L' \cup T') \circ (L \cup T) \right).
 \]
 This means
 \begin{align*}
  \rmL'_\calC((\bbM_1 \disjun \bbM'_1) \circ \bbB_1) &= \calD^{-1-\ell-\ell'} \delta^{n + n' - \sigma(L) - \sigma(L')} 
  F'_\intL( L \cup \hat{T} ) \ F'_\intL( L' \cup \hat{T}' ) \\*
  &= \calD \rmL'_\calC(\bbM_1 \circ \overline{\bbD^3_{\epsilon_{\one}}}) \rmL'_\calC(\bbM'_1 \circ \bbD^3_{\eta_{\one}}) \\*
  &= \calD \rmL'_\calC((\bbM_1 \disjun \bbM'_1) \circ \bbA_1).
 \end{align*}

 Now let us move on to the second case, and let us consider a connected morphism $\bbM_1 : \bbSigma_1 \rightarrow \varnothing$ of $\adCob_{\calC}$. If $\bbM_1 = (M_1,T,n)$ then
 \[
  \bbM_1 \circ \bbA_1 = \left( (S^{0} \times D^{3}) \cup_{(S^{0} \times S^{2})} M_1,T_{A_1} \cup_{P_{\Sigma_1}} T,n \right).
 \]
 Let us set for convenience $\hat{T} := T_{A_1} \cup_{P_{\Sigma_1}} T$. Then, if $L$ is an $\ell$-component surgery link for $(S^{0} \times D^{3}) \cup_{(S^{0} \times S^{2})} M_1$, we have
 \[
  \rmL'_\calC(\bbM_1 \circ \bbA_1) = \calD^{-1-\ell} \delta^{n - \sigma(L)} F'_\intL( L \cup \hat{T} ).
 \]
 Let $L \cup T : (+,P_{\one}) \rightarrow (+,P_{\one})$ be the unique morphism of $\calR_\intL$ satisfying 
 \[
  L \cup \hat{T} = T_{\epsilon_{\one}} \circ (L \cup T) \circ T_{\eta_{\one}}.
 \]
 Then $T_{\eta_{\one}} \circ T_{\epsilon_{\one}} \circ (L \cup T)$ is a cutting presentation of $L \cup \hat{T}$. This means
 \[
  F'_\intL( L \cup \hat{T} ) = \rmt_{P_{\one}} \left( F_\intL \left( T_{\eta_{\one}} \circ T_{\epsilon_{\one}} \circ (L \cup T) \right) \right).
 \]
 Now let $L' \cup T'$ denote the admissible bichrome graph
 \[
  \raisebox{-0.5\height}{\includegraphics{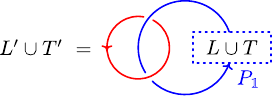}}
 \]
 Thanks to Lemma \ref{L:cutting}, we have
 \[
  F'_\intL \left( L' \cup T' \right) = \zeta F'_\intL( L \cup \hat{T} ).
 \]
 But now
 \[
  \rmL'_\calC(\bbM_1 \circ \bbB_1) = \calD^{-1-(\ell+1)} \delta^{n - \sigma(L')} F'_\intL (L' \cup T').
 \]
 Then, since $\sigma(L') = \sigma(L)$ and $\zeta = \calD^2$, we have
 \begin{align*}
  \rmL'_\calC(\bbM_1 \circ \bbB_1) &= \zeta \calD^{-2-\ell} \delta^{n - \sigma(L)} F'_\intL( L \cup \hat{T} ) \\*
  &= \calD^{-\ell} \delta^{n - \sigma(L)} F'_\intL( L \cup \hat{T} ) \\*
  &= \calD \rmL'_\calC( \bbM_1 \circ \bbA_1 ).
 \end{align*}

 If $k = 2$ let us consider a connected morphism $\bbM_2 : \bbSigma_2 \rightarrow \varnothing$ of $\adCob_{\calC}$. If $\bbM_2 = (M_2,T,n)$ then
 \begin{gather*}
  \bbM_2 \circ \bbA_2 = \left( (S^{1} \times \overline{D^{2}}) \cup_{(S^1 \times S^1)} M_2,K_{A_2} \cup T,n \right), \\*
  \bbM_2 \circ \bbB_2 = \left( (D^{2} \times S^{1}) \cup_{(S^1 \times S^1)} M_2,T,n + \sigma(L \cup K_{A_2}) - \sigma(L) \right).
 \end{gather*}
 If $L$ is an $\ell$-component surgery link for $(S^{1} \times \overline{D^{2}}) \cup_{(S^1 \times S^1)} M_2$, then $L \cup K_{A_2}$ is an $\ell + 1$-component surgery link for $(D^{2} \times S^{1}) \cup_{(S^1 \times S^1)} M_2$. Therefore
 \begin{gather*}
  \rmL'_\calC(\bbM_2 \circ \bbA_2) = \calD^{-1 - \ell} \delta^{n - \sigma(L)} F'_\intL(L \cup K_{A_2}), \\*
  \rmL'_\calC(\bbM_2 \circ \bbB_2) = \calD^{-1 - (\ell + 1)} \delta^{(n + \sigma(L \cup K_2) - \sigma(L)) -   \sigma(L \cup K_{A_2})} F'_\intL(L \cup K_{A_2}).
 \end{gather*}
 Note that the red knot $K_{A_2}$ plays the role of a decoration for $\bbM_2 \circ \bbA_2$, while it plays the role of a surgery component for $\bbM_2 \circ \bbB_2$. This implies 
 \[
  \rmL'_\calC(\bbM_2 \circ \bbB_2) = \calD^{-1} \rmL'_\calC(\bbM_2 \circ \bbA_2). \qedhere
 \]
\end{proof}

\FloatBarrier

\subsection{Connectedness}\label{S:connectedness}

We establish now some useful properties of the functors $\rmV_\calC$ and $\rmV'_\calC$ which will be used for the proof of their monoidality and for the computation of their image. Loosely speaking, they can be summarized as follows:
 \begin{enumerate}
 \item If $\bbSigma$ is an object of $\adCob_\calC$, then $\rmV_\calC(\bbSigma)$ is generated by admissible graphs inside a fixed connected cobordism;
 \item If $\bbSigma$ is a non-empty connected object of $\adCob_\calC$, then $\rmV'_\calC(\bbSigma)$ is generated by graphs inside a fixed connected cobordism.
\end{enumerate}

We start by generalizing the notion of skein equivalence we gave in Section \ref{S:skein_equivalence} for linear combinations of morphisms of $\calR_\intL$ to linear combinations of bichrome graphs inside 3-dimensional cobordisms. This generalized notion of skein equivalence will subtly depend on the cobordism it takes place in, as a result of our definition of the category $\adCob_\calC$. Indeed, if $M$ is a connected cobordism with empty incoming boundary, then bichrome graphs inside it are required to be admissible, and skein equivalences need to preserve this property. Very roughly speaking, in this case we say two linear combinations of admissible bichrome graphs are skein equivalent if they are related by a skein equivalence of $\calR_\intL$ within a 3-ball whose complement contains a projective edge, which is an edge whose label is projective. On the other hand, if $M'$ is a connected cobordism with non-empty incoming boundary, then bichrome graphs inside it need not be admissible, and skein equivalence can be defined as a local relation. In this case, we simply say two linear combinations of bichrome graphs are skein equivalent if they are related by a skein equivalence of $\calR_\intL$ within any 3-ball, regardless of its complement.

In order to give a precise definition, we first need to fix some notation. In particular, we need to specify how to translate bichrome graphs contained inside a 3-ball into morphisms of $\calR_\intL$. In order to do this, let us consider, for every integer $k \geq 0$, an embedding $f_k : D^3 \hookrightarrow \R^2 \times I$ which maps uniformly the 1-dimensional submanifold 
\[
 \{ (x,y,z) \in S^2 \mid y=0, z \geq 0 \} \subset D^3
\] 
to the interval 
\[
 ([0,k+1] \times \{ 0 \}) \times \{ 1 \} \subset \R^2 \times I.
\]
In other words, $f_k$ should map the point $(\cos(\frac{t}{k+1} \pi),0,\sin(\frac{t}{k+1} \pi)) \in D^3$ to the point $((t,0),1) \in \R^2 \times I$ for every $t \in [0,k+1]$. Then, every time we have an object $(\underline{\epsilon},\underline{V}) = ((\epsilon_1,V_1),\ldots,(\epsilon_k,V_k))$ of $\calR_\intL$, we can use the embedding $f_k$ to define by pull back a standard blue set $P_{(\underline{\epsilon},\underline{V})}$ inside $S^2$. We also denote with $f'_k : \overline{D^3} \hookrightarrow \R^2 \times I$ the embedding obtained from $\overline{f_k} : \overline{D^3} \hookrightarrow \overline{\R^2 \times I}$ by composition with the map $\tau : \overline{\R^2 \times I} \to \R^2 \times I$ sending every $((x,y),t) \in \overline{\R^2 \times I}$ to $((x,y),1-t) \in \R^2 \times I$.

Now let us consider an object $\bbSigma = (\Sigma,P,\lambda)$ of $\adCob_\calC$. If $M$ is a connected 3-dimensional cobordism from $\varnothing$ to $\Sigma$, then let us fix an isomorphism of cobordisms $f_M : M \rightarrow D^3 \cup_{S^2} \hat{M}$ for some cobordism $\hat{M}$ from $S^2$ to $\Sigma$ (the choice of which isomorphism $f_M$ to fix is completely inconsequential). We say two linear combinations of admissible bichrome graphs in $M$ from $\varnothing$ to $P$ are \textit{skein equivalent} if, up to isotopies fixing $P$, their images under $f_M$ are of the form $\sum_{i=1}^m \alpha_i \cdot ( T_i \cup \hat{T} )$ and $\sum_{i'=1}^{m'} \alpha'_{i'} \cdot ( T'_{i'} \cup \hat{T})$ for some object $(\underline{\epsilon},\underline{V})$ of $\calR_\intL$, for some linear combinations $\sum_{i=1}^m \alpha_i \cdot T_i$ and $\sum_{i'=1}^{m'} \alpha'_{i'} \cdot T'_{i'}$ of bichrome graphs in $D^3$ from $\varnothing$ to $P_{(\underline{\epsilon},\underline{V})}$ satisfying
\[
 \sum_{i=1}^m \alpha_i \cdot f_k(T_i) \doteq \sum_{i'=1}^{m'} \alpha'_{i'} \cdot f_k(T'_{i'})
\]
in $\calR_\intL(\varnothing,(\underline{\epsilon},\underline{V}))$ as in Section \ref{S:skein_equivalence}, and for some admissible bichrome graph $\hat{T}$ in $\hat{M}$ from $P_{(\underline{\epsilon},\underline{V})}$ to $P$. Skein equivalences inside $M$ will still be denoted by ${}\doteq{}$.

Next, let us suppose the object $\bbSigma$ is non-empty. If $M'$ is a connected 3-dimensional cobordism from $\Sigma$ to $\varnothing$, then let us fix an isomorphism of cobordisms $f_{M'} : M' \rightarrow \hat{M}' \cup_{S^2} \overline{D^3}$ for some cobordism $\hat{M}'$ from $\Sigma$ to $S^2$. We say two linear combinations of bichrome graphs in $M'$ from $P$ to $\varnothing$ are \textit{skein equivalent} if, up to isotopy, their images under $f_{M'}$ are of the form $\sum_{i=1}^m \alpha_i \cdot ( \hat{T}' \cup T_i )$ and $\sum_{i'=1}^{m'} \alpha'_{i'} \cdot ( \hat{T}' \cup T'_{i'} )$ for some object $(\underline{\epsilon},\underline{V})$ of $\calR_\intL$, for some linear combinations $\sum_{i=1}^m \alpha_i \cdot T_i$ and $\sum_{i'=1}^{m'} \alpha'_{i'} \cdot T'_{i'}$ of bichrome graphs in $\overline{D^3}$ from $P_{(\underline{\epsilon},\underline{V})}$ to $\varnothing$ satisfying
\[
 \sum_{i=1}^m \alpha_i \cdot f'_k(T_i) \doteq \sum_{i'=1}^{m'} \alpha'_{i'} \cdot f'_k(T'_{i'})
\]
in $\calR_\intL(\varnothing,(\underline{\epsilon},\underline{V}))$ as in Section \ref{S:skein_equivalence}, and for some bichrome graph $\hat{T}'$ in $\hat{M}'$ from $P$ to $P_{(\underline{\epsilon},\underline{V})}$. As before, skein equivalences inside $M'$ will still be denoted by ${}\doteq{}$.

Let us also quickly observe that the red-to-blue operation defined in Figure \ref{F:red_turns_blue} for morphisms of $\calR_\intL$ can be straightforwardly generalized to admissible bichrome graphs inside connected 3-dimensional cobordisms. Remark however that this is operation is not local, meaning it does not take place inside a 3-ball.

Now, if $\bbSigma = (\Sigma,P,\lambda)$ is an object of $\adCob_\calC$ and $M$ is a connected 3-dimensional cobordism from $\varnothing$ to $\Sigma$, then we denote with $\calV(M;\bbSigma)$ the vector space generated by isotopy classes of admissible bichrome graphs inside $M$ from $\varnothing$ to $P$. Similarly, if $\bbSigma$ is non-empty and $M'$ is a connected 3-dimensional cobordism from $\Sigma$ to $\varnothing$, then we denote with $\calV'(M';\bbSigma)$ the vector space generated by isotopy classes of bichrome graphs inside $M'$ from $P$ to $\varnothing$. We also denote with $\pi_{\bbSigma}$ and with $\pi'_{\bbSigma}$ the natural linear maps
\[
 \begin{array}{rccc}
  \pi_{\bbSigma} : & \calV(M;\bbSigma) & \rightarrow & \rmV_\calC(\bbSigma) \\
  & T & \mapsto & [M,T,0]
 \end{array} \qquad
 \begin{array}{rccc}
  \pi'_{\bbSigma} : & \calV'(M';\bbSigma) & \rightarrow & \rmV'_\calC(\bbSigma) \\
  & T' & \mapsto & [M',T',0]
 \end{array}
\]

\begin{proposition}\label{P:connection+skein}
 Let $\bbSigma = (\Sigma,P,\lambda)$ be an object of $\adCob_\calC$, let $M$ be a connected 3-dimensional cobordism from $\varnothing$ to $\Sigma$, and let $M'$ be a connected 3-dimensional cobordism from $\Sigma$ to $\varnothing$.
 \begin{enumerate}
  \item The linear map $\pi_{\bbSigma}$ is surjective, and vectors of $\calV(M;\bbSigma)$ related by a finite sequence of skein equivalences and red-to-blue operations have the same image in $\rmV_\calC(\bbSigma)$;
  \item If $\bbSigma \neq \varnothing$ is connected, then the linear map $\pi'_{\bbSigma}$ is surjective, and vectors of $\calV'(M';\bbSigma)$ related by a finite sequence of skein equivalences and red-to-blue operations have the same image in $\rmV'_\calC(\bbSigma)$.
 \end{enumerate} 
\end{proposition}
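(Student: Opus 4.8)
The plan is to prove both parts in parallel, since the argument for $\pi_\bbSigma$ and for $\pi'_\bbSigma$ is the same with the roles of incoming and outgoing boundary exchanged. In each case there are two things to establish: surjectivity, and invariance of the induced class under skein equivalences and red-to-blue operations. Both reduce to the defining pairing: two generators determine the same vector in $\rmV_\calC(\bbSigma)$ (resp. $\rmV'_\calC(\bbSigma)$) exactly when they pair identically against every test vector, and every such pairing is a value of $\rmL'_\calC$ on a closed morphism of $\adCob_\calC$, hence ultimately a value of $F'_\intL$ on a surgery presentation. So it suffices to control how these values transform under the operations in question.

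For surjectivity I would start from an arbitrary generator $(N,T,n)$ of $\rmV_\calC(\bbSigma)$ and reduce it, up to a nonzero scalar, to a graph inside the fixed connected $M$. First remove the signature defect: for any test vector the composite closed morphism has its defect shifted by exactly $n$ while its Maslov correction is unaffected, so $\rmL'_\calC(\bbM'_\bbSigma \circ (N,T,n)) = \delta^n\, \rmL'_\calC(\bbM'_\bbSigma \circ (N,T,0))$, whence $[N,T,n] = \delta^n [N,T,0]$. Next connect the components of $N$: since the incoming boundary is empty every component carries an admissible subgraph, so an index-$1$ surgery joins two components while preserving admissibility and, by Proposition \ref{P:surgery_axioms}, multiplies the class by $\lambda_1^{\pm1} = \calD^{\pm1} \neq 0$. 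Iterating produces a connected cobordism, and any two connected cobordisms with boundary $\Sigma$ are related by a finite sequence of index-$k$ surgeries with $k \in \{0,1,2\}$ (a standard handle-decomposition argument), each move again rescaling the class by $\calD^{\pm1}$ via Proposition \ref{P:surgery_axioms}. Hence $(N,T,n)$ equals a nonzero scalar times $[M,T',0] = \pi_\bbSigma(T')$ for some admissible $T'$, so $\pi_\bbSigma$ is surjective. Part (ii) is handled identically, the connectedness and non-emptiness of $\bbSigma$ being used precisely to guarantee a single component meeting the incoming boundary $\Sigma$, into which all remaining admissible components can be absorbed without leaving any residual admissibility constraint.

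For invariance, the red-to-blue case is immediate: Lemma \ref{L:red_turns_blue} gives $F_\intL(T) = F_\intL(\tilde T)$ and $F'_\intL(T) = F'_\intL(\tilde T)$, so every pairing value is unchanged and the two graphs have the same image. For a skein equivalence inside $M$, recall that by definition the two linear combinations agree outside a $3$-ball $D^3$ whose complement contains a projective edge, while their restrictions to $D^3$ satisfy the corresponding linear relation for $F_\intL$. When paired against a test vector and presented by surgery, $\rmL'_\calC$ is computed as $\rmt_V(F_\intL(\,\cdot\,))$ for a cutting presentation whose cutting edge may be chosen inside the projective edge lying outside $D^3$; since $F_\intL$ respects the local relation in $D^3$ and $\rmt_V$ is linear, the resulting value of $F'_\intL$, and therefore of $\rmL'_\calC$, obeys the same relation. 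Thus the paired values coincide and the images in $\rmV_\calC(\bbSigma)$ agree.

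The hard part is the surjectivity step, and specifically the topological input that every connected cobordism with prescribed boundary is obtained from the fixed $M$ by index-$0$, $1$, and $2$ surgeries realized through the attaching and belt tubes of Proposition \ref{P:surgery_axioms}, while the graph decoration and admissibility are tracked through every move. The subtle asymmetry between the two parts — that $\bbSigma$ must be assumed connected and non-empty in (ii) but not in (i) — is governed entirely by the admissibility condition of Definition \ref{D:adm_cob_cat}, which constrains only components disjoint from the incoming boundary; ensuring that this condition is maintained (and imposes no spurious obstruction) throughout the connecting and surgery steps is where the genuine care lies.
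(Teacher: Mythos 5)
Your overall architecture matches the paper's: invariance under skein equivalences and red-to-blue operations follows directly from the definition of $\rmL'_\calC$ in terms of $F_\intL$ together with Lemma \ref{L:red_turns_blue}, and surjectivity is obtained by first making the cobordism connected via index $1$ surgery and then presenting the resulting connected cobordism by a surgery link inside the fixed $M$ (the paper only needs index $2$ surgeries for this second step, citing Section 1.8 of \cite{BHMV95}, rather than your more vaguely invoked mix of index $0$, $1$, $2$ moves, but that is a cosmetic difference).

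There is, however, a genuine gap at the step you yourself flag as ``where the genuine care lies'' and then do not carry out. Proposition \ref{P:surgery_axioms} with $k=1$ does not apply to two arbitrary admissible components: the index $1$ attaching tube $\bbA_1$ is a \emph{decorated} cobordism whose two balls carry coupons colored by $\epsilon_{\one}$ and $\eta_{\one}$ attached to $P_{\one}$-colored points on $\Sigma_1 = S^0 \times S^2$. Before you can join two components by this surgery you must first modify the graph, up to skein equivalence, so that one component exhibits an $\epsilon_{\one}$-coupon and the other an $\eta_{\one}$-coupon in the required configuration. The paper supplies this via the ``projective trick'' (Figure \ref{projective_trick}): admissibility guarantees a blue edge with projective color $V$, and since $\otimes$ is exact and projective objects are also injective, the epimorphism $\epsilon_{\one} \otimes \id_V$ admits a section $s_V$ and the monomorphism $\eta_{\one} \otimes \id_V$ admits a retraction $r_V$, so one may insert $\epsilon_{\one}$- or $\eta_{\one}$-coupons along that edge without changing the skein class. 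Your proposal asserts that ``an index-$1$ surgery joins two components while preserving admissibility'' without constructing this bridge from a projective edge to the required coupons; that construction is the substantive content of the surjectivity argument. Relatedly, in part (\textit{ii}) your phrase ``all remaining admissible components can be absorbed into'' the component meeting $\Sigma$ is not quite right: that component need not carry a projective edge, so the projective trick (and hence the index $1$ surgery) may be unavailable there. The paper instead observes that the remaining components are closed and admissible, hence contribute only scalar factors, after which one surgers the single component meeting $\Sigma$ onto $M'$.
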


\begin{proof}
 Let us start from part (\textit{i}). First, remark that if we have a skein equivalence
 \[
  \sum_{i=1}^m \alpha_i \cdot T_i \doteq \sum_{i'=1}^{m'} \alpha'_{i'} \cdot T'_{i'}
 \]
 between vectors of $\calV(M;\bbSigma)$, then
 \[
  \sum_{i=1}^m \alpha_i \rmL'_\calC( \bbM'_{\bbSigma} \circ (M,T_i,0) ) = 
  \sum_{i'=1}^{m'} \alpha'_{i'} \rmL'_\calC( \bbM'_{\bbSigma} \circ (M,T'_{i'},0) )
 \]
 for every morphism $\bbM'_{\bbSigma} : \bbSigma \rightarrow \varnothing$ of $\adCob_\calC$. This follows directly from the very definition of $\rmL'_\calC$ in terms of the Lyubashenko-Reshetikhin-Turaev functor $F_\intL$. Therefore skein equivalent vectors of $\calV(M;\bbSigma)$ have the same image in $\rmV_\calC(\bbSigma)$. The same applies to vectors related by a red-to-blue operation.

 Next, we claim that, up to skein equivalence, we can assume every connected component of every vector in $\calV(\bbSigma)$ features a coupon of label $\epsilon_{\one}$, or one of label $\eta_{\one}$, or both. In order to show this, the idea is to use the properties of projective objects of $\calC$. Indeed, since the tensor product $\otimes$ is exact, then for every $V \in \calC$ the morphism $\epsilon_{\one} \otimes \id_V : P_{\one} \otimes V \rightarrow V$ is epic, and the morphism $\eta_{\one} \otimes \id_V : V \rightarrow P_{\one} \otimes V$ is monic. Therefore, if $V \in \Proj(\calC)$, we can always find a section $s_V: V \rightarrow P_{\one} \otimes V$, i.e. a morphism satisfying $(\epsilon_{\one} \otimes \id_V) \circ s_V = \id_V$, like in Lemma \ref{L:red_turns_blue}. Note that, thanks to the rigidity of $\calC$, projective objects are also injective, and thus similarly we can always find a retraction $r_V: P_{\one} \otimes V \rightarrow V$, i.e. a morphism satisfying $r_V \circ (\eta_{\one} \otimes \id_V) = \id_V$. This means that every time a vector of $\calV(\bbSigma)$ features a blue edge labeled by some projective object $V$, we can replace a small portion of it with one of the bichrome graphs represented in Figure \ref{projective_trick} without altering the vector in the quotient $\rmV_\calC(\bbSigma)$. We call this operation \textit{projective trick}, and we will use it in the following argument.

 \begin{figure}[hbt]
  \centering
  \includegraphics{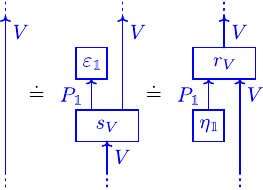}
  \caption{Projective trick along a blue edge of label $V \in \Proj(\calC)$.}
  \label{projective_trick}
 \end{figure}

 Now, to prove that $\pi_{\bbSigma}$ is surjective, we have to show that for every vector $(M_{\Sigma},T,n) : \varnothing \to \bbSigma$ there exist admissible bichrome graphs $T_1, \ldots, T_m \subset M$ and coefficients $\alpha_1, \ldots, \alpha_m \in \Bbbk$ such that
 \[
  \sum_{i = 1}^m \alpha_i \cdot [M,T_i,0] = [M_{\Sigma},T,n].
 \]
 We do this in two steps. First, we can assume that $M_{\Sigma}$ is connected: indeed every time we have distinct connected components of $M_\Sigma$ we can suppose, up to skein equivalence, one of them contains a coupon of label $\epsilon_{\one}$, while the other one contains a coupon of label $\eta_{\one}$. This uses the projective trick introduced earlier, as well as the admissibility condition for morphisms of $\adCob_\calC$. Then, thanks to Proposition \ref{P:surgery_axioms}, the 1-surgery connecting them will determine a vector of $\rmV_\calC(\bbSigma)$ which is a non-zero scalar multiple of $[M_{\Sigma},T,n]$. Second, assuming now $M_{\Sigma}$ is connected, we know there exists an $\ell$-component surgery link $L$ for $M_{\Sigma}$ inside $M$, as explained in \cite[Sec.~1.8]{BHMV95}. Then, thanks to Proposition \ref{P:surgery_axioms} with $k = 2$, there exists some signature defect $n' \in \Z$ such that 
 \[
  [M_{\Sigma},T,n] = \calD^{-\ell} \cdot [M,L \cup T,n'] = \calD^{-\ell} \delta^{n'} \cdot [M,L \cup T,0]
 \]
 where, once again, we adopt a slightly abusive notation for the pull back of the bichrome graph $T$ along the embedding of the exterior of $L$ into $M_{\Sigma}$.

 The proof of part (\textit{ii}) is almost identical, except it is easier. Indeed, the only difference is we cannot perform 1-surgeries on arbitrary disconnected cobordisms, because their connected components intersecting $\Sigma$ need not contain a projective edge. However, since $\Sigma$ is connected, there is only one connected component intersecting it, and all the other ones are closed. In particular, they only account for a scalar coefficient.
\end{proof}

\FloatBarrier

\subsection{Monoidality}\label{S:monoidality}

We use the results of Sections \ref{S:surgery_axioms} and \ref{S:connectedness} in order to prove that $\rmV_\calC$ is a TQFT.

\begin{theorem}\label{T:monoidality}
 The functor $\rmV_\calC : \adCob_\calC \to \Vect_\Bbbk$ is symmetric monoidal.
\end{theorem}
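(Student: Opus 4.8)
The plan is to upgrade the lax structure $\mu$ of Proposition~\ref{lax_monoidality} to a strong symmetric monoidal structure. Since $\mu$ is already a natural monomorphism, and since the associativity, unit, and symmetry constraints are inherited verbatim from the disjoint union of cobordisms --- which is strictly associative, unital, and symmetric --- and are transported through the defining formula $\mu_{\bbSigma,\bbSigma'}([\bbM_{\bbSigma}] \otimes [\bbM_{\bbSigma'}]) = [\bbM_{\bbSigma} \disjun \bbM_{\bbSigma'}]$, the only substantial points left are the unit axiom $\rmV_\calC(\varnothing) \cong \Bbbk$ and the surjectivity of each $\mu_{\bbSigma,\bbSigma'}$, which together with Proposition~\ref{lax_monoidality} makes it an isomorphism. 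For the unit, observe that $\calV(\varnothing)$ is generated by closed morphisms $\bbM : \varnothing \to \varnothing$, and that for two such $\bbM, \bbM'$ the composite $\bbM' \circ \bbM$ is simply their disjoint union. By the multiplicativity of $\rmL'_\calC$ under disjoint union of closed connected morphisms, the form $\langle \bbM',\bbM \rangle_{\varnothing} = \rmL'_\calC(\bbM' \circ \bbM) = \rmL'_\calC(\bbM') \, \rmL'_\calC(\bbM)$ has rank one; its radical quotient is thus at most one dimensional, and it is exactly one dimensional because the empty cobordism is admissible and has $\rmL'_\calC = 1$. Hence $\rmV_\calC(\varnothing) \cong \Bbbk$.

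The heart of the proof is surjectivity. Fix objects $\bbSigma = (\Sigma,P,\calL)$ and $\bbSigma' = (\Sigma',P',\calL')$. By Proposition~\ref{P:connection+skein}~(\textit{i}), the space $\rmV_\calC(\bbSigma \disjun \bbSigma')$ is generated by classes $[M,T,0]$ where $M$ is a single fixed connected cobordism from $\varnothing$ to $\Sigma \disjun \Sigma'$ and $T$ ranges over admissible bichrome graphs. We are free to choose $M$, and we take it to be the connected cobordism obtained by joining a connected cobordism $M_{\bbSigma} : \varnothing \to \Sigma$ and a connected cobordism $M_{\bbSigma'} : \varnothing \to \Sigma'$ by a single tube $\cong S^2 \times I$, whose belt sphere $S$ separates $M$ into $M_{\bbSigma}$ and $M_{\bbSigma'}$. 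Using the projective trick and the skein equivalences of Section~\ref{S:connectedness}, we bring $T$ into a normal form in which $S$ meets $T$ transversely in a single point lying on a blue edge of colour $P_{\one}$ running along the core of the tube; this is exactly the configuration of the index $1$ belt tube $\bbB_1$.

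In this normal form the tube together with its $P_{\one}$ edge is the belt tube $\bbB_1$, while cutting along $S$ and capping both sides --- that is, replacing $\bbB_1$ by the attaching tube $\bbA_1$, whose caps carry the coupons $\epsilon_{\one} : P_{\one} \to \one$ and $\eta_{\one} : \one \to P_{\one}$ --- disconnects $M$ into the split cobordism $M_{\bbSigma} \disjun M_{\bbSigma'}$, each factor now containing a projective ($P_{\one}$-coloured) edge and so admissible. For every closing morphism $\bbM' : \bbSigma \disjun \bbSigma' \to \varnothing$ the two closed manifolds obtained by precomposition differ precisely by this index $1$ surgery, so Proposition~\ref{P:surgery_axioms} (case $k=1$, with $\lambda_1 = \calD$) gives $\rmL'_\calC(\bbM' \circ [M,T,n]) = \calD \, \rmL'_\calC(\bbM' \circ [M_{\bbSigma} \disjun M_{\bbSigma'}, T_{\bbSigma} \disjun T_{\bbSigma'}, n])$. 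As this holds against every $\bbM'$, the defining pairing forces $[M,T,n] = \calD \cdot [M_{\bbSigma} \disjun M_{\bbSigma'}, T_{\bbSigma} \disjun T_{\bbSigma'}, n]$ in $\rmV_\calC(\bbSigma \disjun \bbSigma')$. The right hand side equals $\calD \cdot \mu_{\bbSigma,\bbSigma'}([M_{\bbSigma},T_{\bbSigma},n_1] \otimes [M_{\bbSigma'},T_{\bbSigma'},n_2])$, hence lies in the image of $\mu_{\bbSigma,\bbSigma'}$; therefore $\mu_{\bbSigma,\bbSigma'}$ is surjective, and thus an isomorphism.

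I expect the main obstacle to be the graphical normalization of the second paragraph: converting an arbitrary admissible $T$ inside the connected cobordism into the $\bbB_1$ normal form, with a single $P_{\one}$ edge crossing the belt sphere, while staying in the same class of $\rmV_\calC(\bbSigma \disjun \bbSigma')$ and keeping both resulting pieces admissible. This is exactly where the projective trick, the red-to-blue operation, and the Cutting Lemma of Section~\ref{S:skein_equivalence} must be combined, and it is the reason the argument is routed through the connectedness results of Section~\ref{S:connectedness} rather than attempted on an arbitrary connected $M$ directly.
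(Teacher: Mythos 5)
Your overall strategy matches the paper's: reduce to surjectivity of $\mu_{\bbSigma,\bbSigma'}$ via Proposition~\ref{lax_monoidality}, generate $\rmV_\calC(\bbSigma \disjun \bbSigma')$ by graphs in a fixed connected cobordism obtained by joining $M_{\bbSigma}$ and $M_{\bbSigma'}$ with a tube (Proposition~\ref{P:connection+skein}), and then split the tube by the index~$1$ surgery relation $[\bbB_1] = \calD \cdot [\bbA_1]$ from Proposition~\ref{P:surgery_axioms}. However, there is a genuine gap at the step you yourself flag as ``the main obstacle'' and then do not carry out: the claimed normal form in which the belt sphere is crossed by a \emph{single edge of colour $P_{\one}$} is not achievable by skein equivalence alone. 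What the skein moves and the projective trick give you is a single crossing edge of some projective colour $V$, and a general projective $V$ is \emph{not} a direct summand of a sum of copies of $P_{\one}$; it decomposes through the projective covers $P_{V_i}$ of all the simples $V_i$. Writing $\id_V = \sum_i f'_i \circ f_i$ with $f_i \in \calC(V,P_{V_i})$, $f'_i \in \calC(P_{V_i},V)$, the class $[M,T,0]$ becomes a linear combination of classes in which the tube carries a single $P_{V_i}$-coloured edge, and only the terms with $V_i = \one$ match the configuration of $\bbB_1$.

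The missing idea is that the terms with $V_i \neq \one$ vanish in the state space: pairing the tube $D^1 \times S^2$ carrying a $P_{V_i}$-edge against $D^1 \times \overline{S^2}$ produces (via the Cutting Lemma~\ref{L:cutting} and Lemma~\ref{lem:lambda-iota}) the factor $\cointL \circ i_{P_{V_i}} = \delta_{V_i,\one}\, \eta_{\one}^* \otimes \epsilon_{\one}$, so $[D^1 \times S^2, D^1 \times P_{(+,P_{V_i})},0] = 0$ whenever $V_i \neq \one$. This is precisely where factorizability of $\calC$ enters the monoidality proof, and without it the statement is false; your argument as written never invokes it. Once this vanishing is established, the surviving $P_{\one}$-terms are handled exactly as you describe, so the remainder of your proof (including the admissibility of the two capped pieces and the application of Proposition~\ref{P:surgery_axioms} with $k=1$) is sound.
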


\begin{proof}
 Since both associativity and left and right unitality of the coherence data are clear, then, thanks to Proposition \ref{lax_monoidality}, we just need to prove the linear map  $\mu_{\bbSigma,\bbSigma'} : \rmV_{\calC}(\bbSigma) \otimes \rmV_{\calC}(\bbSigma') \to \rmV_{\calC}(\bbSigma \disjun \bbSigma')$ is surjective for every pair of objects $\bbSigma$, $\bbSigma'$ of $\adCob_\calC$. Let $M_\Sigma$ be a connected cobordism from $\overline{S^2}$ to $\Sigma$ and let $M_{\Sigma'}$ be a connected cobordism from $S^2$ to $\Sigma'$. Proposition \ref{P:connection+skein} implies $\rmV_\calC(\bbSigma \disjun \bbSigma')$ is generated by vectors of the form
 \[
  [(D^1 \times S^2) \cup_{S^0 \times S^2} (M_\Sigma \sqcup M_{\Sigma'}),T,0]
 \]
 for some bichrome graph $T$ inside $(D^1 \times S^2) \cup_{S^0 \times S^2} (M_{\Sigma} \sqcup M_{\Sigma'})$ from $\varnothing$ to $P \sqcup P'$. Let us choose such a $T$ and let us show that the corresponding vector of $\rmV_\calC(\bbSigma \disjun \bbSigma')$ lies in the image of $\mu_{\bbSigma,\bbSigma'}$. By definition of $\adCob_\calC$, we know $T$ admits a projective edge. Then, using Proposition \ref{P:connection+skein}, we can suppose $D^1 \times S^2$ intersects only blue edges of $T$. Furthermore, up to isotopy, we can suppose $D^1 \times S^2$ intersects a projective edge of $T$. Then, up to skein equivalence, since the tensor product of a projective object of $\calC$ with any other object of $\calC$ is projective, we can suppose $D^1 \times S^2$ is crossed by a single edge whose label $V$ is a projective object of $\calC$. Now, since there exist simple objects $V_i \in \Irr$ and morphisms $f_i \in \calC(V,P_{V_i})$ and $f'_i \in \calC(P_{V_i},V)$ for every integer $0 \leq i \leq m$ satisfying
 \[
  \id_V = \sum_{i=1}^m f'_i \circ f_i,
 \]
 we can decompose $[(D^1 \times S^2) \cup_{(S^0 \times S^2)} (M_{\Sigma} \sqcup M_{\Sigma'}),T,0]$ as
 \[
  \sum_{i=1}^m \left[ \left( (M_{\Sigma},T_{f_i},0) \disjun {}(M_{\Sigma'},T_{f'_i},0) \right) \circ (D^1 \times S^2,D^1 \times P_{(+,P_{V_i})},0) \right]
 \]
 for the blue set $P_{(+,P_{V_i})}$ inside $S^2$ given by $\{(0,0,1)\}$ with positive orientation and label $P_{V_i}$, and for some bichrome graphs $T_{f_i}$ inside $M_{\Sigma}$ from $\{ -1 \} \times \overline{P_{(+,P_{V_i})}}$ to $P$ and $T_{f'_i}$ inside $M_{\Sigma'}$ from $\{ +1 \} \times P_{(+,P_{V_i})}$ to $P'$ induced by $f_i$ and by $f'_i$ respectively, where $\overline{P_{(+,P_{V_i})}}$ denotes the blue set obtained from $P_{(+,P_{V_i})}$ by reversing its orientation. However, if $V_i \neq \one$, then
 \[
  \left[ D^1 \times S^2,D^1 \times P_{(+,P_{V_i})},0 \right] = 0.
 \]
 Indeed, this follows directly from Proposition \ref{P:connection+skein} and Lemma \ref{L:cutting} by choosing $D^1 \times \overline{S^2}$ as a 3-dimensional cobordism from $S^0 \times S^2$ to $\varnothing$. Then, if we suppose $V_i = \one$ only for every integer $1 \leq i \leq n \leq m$, we have
 \[
  [(D^1 \times S^2) \cup_{(S^0 \times S^2)} (M_{\Sigma} \sqcup M_{\Sigma'}),T,0]
  = \sum_{i=1}^n \left[ \left( (M_{\Sigma},T_{f_i},0) \disjun {}(M_{\Sigma'},T_{f'_i},0) \right) \circ \bbB_1 \right]
 \]
 for the index 1 belt tube $\bbB_1 : \varnothing \rightarrow \bbSigma_1$ introduced in Subsection \ref{S:surgery_axioms}. But Proposition \ref{P:surgery_axioms} with $k = 1$ yields the equality $[\bbB_1] = \calD \cdot [\bbA_1]$ between vectors of $\rmV_\calC(\bbSigma_1)$, where $\bbA_1 : \varnothing \rightarrow \bbSigma_1$ is the index 1 attaching tube introduced in Subsection \ref{S:surgery_axioms}. Then we have the chain of equalities 
 \begin{align*}
  &\sum_{i=1}^n \left[ \left( (M_\Sigma,T_{f_i},0) \disjun {}(M_{\Sigma'},T_{f'_i},0) \right) \circ \bbB_1 \right] \\*
  &\hspace*{\parindent} = \sum_{i=1}^n \calD \cdot \left[ \left( (M_\Sigma,T_{f_i},0) \disjun {} (M_{\Sigma'},T_{f'_i},0) \right) \circ \bbA_1 \right] \\*
  &\hspace*{\parindent} = \sum_{i=1}^n \calD \cdot \left[ \left( (M_\Sigma,T_{f_i},0) \circ \overline{\bbD^3_{\epsilon_{\one}}} \right) \disjun 
  \left( (M_{\Sigma'},T_{f'_i},0) \circ \bbD^3_{\eta_{\one}} \right) \right] \\*
  &\hspace*{\parindent} = \sum_{i=1}^n \calD \cdot \mu_{\bbSigma,\bbSigma'} 
  \left( \left[ (M_{\Sigma},T_{f_i},0) \circ \overline{\bbD^3_{\epsilon_{\one}}} \right] \otimes 
  \left[ (M_{\Sigma'},T_{f'_i},0) \circ \bbD^3_{\eta_{\one}} \right] \right)
 \end{align*}
 for the morphisms $\overline{\bbD^3_{\epsilon_{\one}}} : \varnothing \rightarrow \overline{\bbS^2_{(-,P_{\one})}}$ and $\bbD^3_{\eta_{\one}} : \varnothing \rightarrow \bbS^2_{(+,P_{\one})}$ of $\adCob_\calC$ introduced in the proof of Proposition \ref{P:surgery_axioms}. This concludes the proof of surjectivity of the linear map $\mu_{\bbSigma,\bbSigma'} : \rmV_{\calC}(\bbSigma) \otimes \rmV_{\calC}(\bbSigma') \to \rmV_{\calC}(\bbSigma \disjun \bbSigma')$.
\end{proof}

\begin{remark}\label{R:Verlinde}
 As a consequence of monoidality we get a Verlinde type formula for dualizable objects of $\adCob_\calC$: if $\bbSigma = (\Sigma,P,\lambda)$ is an object of $\adCob_\calC$ then we denote with $\overline{\bbSigma} = (\overline{\Sigma},\overline{P},\lambda)$ the object obtained from $\bbSigma$ by reversing the orientation of both $\Sigma$ and $P$. If $P$ contains a point with projective label in every connected component of $\Sigma$ then $\bbSigma^* = \overline{\bbSigma}$. Duality morphisms 
 \begin{align*}
  \lev_{\bbSigma} &: \bbSigma^* \disjun \bbSigma \to \varnothing, &
 \lcoev_{\bbSigma} &: \varnothing \to \bbSigma \disjun \bbSigma^*, \\
 \rev_{\bbSigma} &: \bbSigma \disjun \bbSigma^* \to \varnothing, &
 \rcoev_{\bbSigma} &: \varnothing \to \bbSigma^* \disjun \bbSigma
 \end{align*}
 are given by cylinders, with right evaluation and coevaluation both realized by the same decorated 3-manifold realizing the identity $\id_{\bbSigma} : \bbSigma \to \bbSigma$, although seen as different cobordisms, and with $\lev_{\bbSigma} = \rev_{\overline{\bbSigma}}$ and $\lcoev_{\bbSigma} = \rcoev_{\overline{\bbSigma}}$. Therefore, if we set $\bbS^1 \times \bbSigma := \rev_{\bbSigma} \circ \lcoev_{\bbSigma}$, we get
 \begin{align*}
  \rmL'_\calC(\bbS^1 \times \bbSigma) &= \rmV_\calC \left( \rev_{\bbSigma} \right) \circ \rmV_\calC \left( \lcoev_{\bbSigma} \right) = \rev_{\rmV_\calC(\bbSigma)} \circ \lcoev_{\rmV_\calC(\bbSigma)} \\* &= \dim_\Bbbk \left( \rmV_\calC(\bbSigma) \right).
 \end{align*}
\end{remark}

\subsection{Identification of state spaces}\label{S:identification_state_spaces}

We finish by showing that state spaces of $\rmV_\calC$ can be identified with the algebraic models defined in Section \ref{S:pairing}. Indeed, recall that we introduced for every integer $g \geq 0$ and for every $V \in \calC$ vector spaces 
\begin{gather*}
 \calX_{g,V} = \calC(P_{\one},\eend^{\otimes g} \otimes V), \quad
 \calX'_{g,V} = \calC(\coend^{\otimes g} \otimes V,\one),
\end{gather*}
as well as quotient spaces $\rmX_{g,V}$ and $\rmX'_{g,V}$ obtained from $\calX_{g,V}$ and from $\calX'_{g,V}$ by factoring respectively the right and the left radical of the bilinear pairing 
\[ 
 \brk{\cdot,\cdot}_{g,V} : \calX'_{g,V} \times \calX_{g,V} \to \Bbbk.
\]
Then, in order to exploit Proposition~\ref{P:connection+skein}, let us consider a genus $g$ Heegaard splitting $M_g \cup_{\Sigma_g} M'_g$ of $S^3$. We denote with $P_{(+,V)}$ a blue set inside $\Sigma_g$ composed of a single point with positive orientation and label $V$, we denote with $\lambda_g$ the Lagrangian subspace of $H_1(\Sigma_g;\R)$ given by the kernel of the inclusion of $\Sigma_g$ into $M_g$, and we denote with $\bbSigma_{g,V}$ the object $(\Sigma_g,P_{(+,V)},\lambda_g)$ of $\adCob_\calC$. We also fix disjoint embeddings 
\[
 \iota_g : M_g \hookrightarrow \R^3, \qquad \iota'_g : M'_g \hookrightarrow \R^3
\] 
placing $M_g$ and $M'_g$ as shown in Figure \ref{F:surgery_presentation} with respect to the surgery presentation of $S^3$ given by the $g$ red Hopf links.

\begin{figure}[htb]
  \centering
  \includegraphics{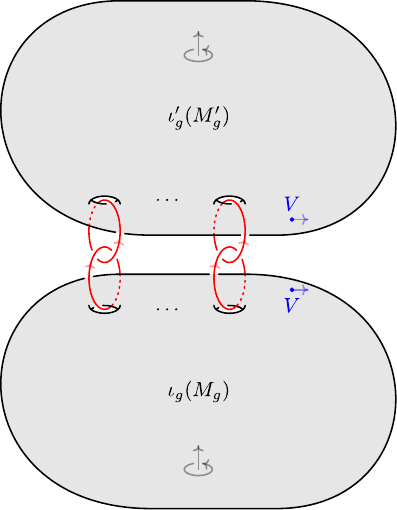}
  \caption{Surgery presentation of $M_g \cup_{\Sigma_g} M'_g$.}
  \label{F:surgery_presentation}
 \end{figure}

Let us consider now the linear map $\Psi : \calX_{g,V} \to \calV(M_g;\bbSigma_{g,V})$ sending every $f$ in $\calX_{g,V}$ to the admissible bichrome graph $\Psi(f) \subset M_g$ from $\varnothing$ to $P_{(+,V)}$ whose image under $\iota_g$ is given by
\begin{gather*}
 \raisebox{-0.5\height}{\includegraphics{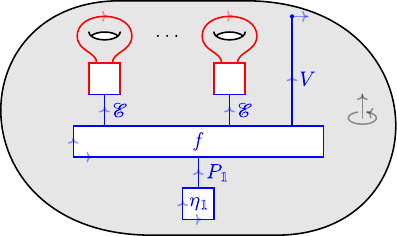}} \\
\end{gather*}

Similarly, let us consider the linear map $\Psi' : \calX'_{g,V} \to \calV'(M'_g;\bbSigma_{g,V})$ sending every $f'$ in $\calX'_{g,V}$ to the bichrome graph $\Psi'(f') \subset M'_g$ from $P_{(+,V)}$ to $\varnothing$ whose image under $\iota'_g$ is given by
\begin{gather*}
 \raisebox{-0.5\height}{\includegraphics{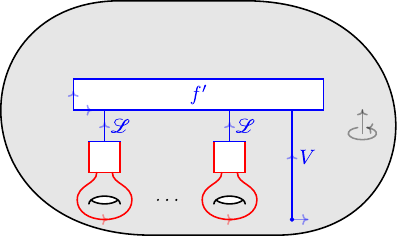}} \\
\end{gather*}

\begin{lemma}\label{L:morphism_to_skein_prime}
 The linear map $\pi'_{\bbSigma_{g,V}} \circ \Psi' : \calX'_{g,V} \to \rmV'_\calC(\bbSigma_{g,V})$ is surjective.
\end{lemma}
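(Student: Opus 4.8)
The plan is to reduce the statement to a normal-form argument for bichrome graphs in the genus $g$ handlebody $M'_g$, fed into the universal property of the coend in exactly the way it was used to define $F_\intL$.

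First I would invoke Proposition \ref{P:connection+skein}(\textit{ii}). Since $\bbSigma_{g,V}$ is non-empty and connected, this tells us that $\pi'_{\bbSigma_{g,V}}$ is surjective and that its value on a bichrome graph inside the fixed connected cobordism $M'_g$ depends only on the class of that graph modulo skein equivalence and red-to-blue operations. Consequently it suffices to show that every bichrome graph $T' \subset M'_g$ from $P_{(+,V)}$ to $\varnothing$ is skein equivalent, after finitely many red-to-blue operations, to $\Psi'(f')$ for some $f' \in \calX'_{g,V}$; then $[M'_g,T',0] = \pi'_{\bbSigma_{g,V}}(\Psi'(f'))$ lies in the image of $\pi'_{\bbSigma_{g,V}} \circ \Psi'$, and surjectivity follows.

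To produce such an $f'$, I would bring $T'$ into a standard position with respect to the handle structure of $M'_g$. Using the embedding $\iota'_g$ of Figure \ref{F:surgery_presentation}, fix a complete system of $g$ meridian disks $D_1,\ldots,D_g$ of the handles of $M'_g$, and isotope $T'$ so that it meets each $D_k$ transversally while all of its coupons and essential crossings are collected inside the ball $B$ obtained by cutting $M'_g$ along the $D_k$. Cutting $T'$ along the disks turns it into a bichrome graph $T'_0$ in $B$ whose only new endpoints are $2g$ matched families lying on the two copies of each $D_k$; labelling the bundle of strands crossing $D_k$ by a single object $X_k \in \calC$, the graph $T'_0$ represents, via the Reshetikhin--Turaev functor, a morphism in $\calC(X_1^* \otimes X_1 \otimes \cdots \otimes X_g^* \otimes X_g \otimes V,\one)$. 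As the $X_k$ vary this family is dinatural, so the universal property of the coend --- in the same form used in Equation \eqref{E:f_calC}, where $\coend^{\otimes g}\otimes V$ with $i^{\otimes g}\otimes \id_V$ is the relevant coend --- produces a unique morphism $f' \in \calC(\coend^{\otimes g} \otimes V,\one) = \calX'_{g,V}$ whose composition with $i_{X_1} \otimes \cdots \otimes i_{X_g} \otimes \id_V$ recovers this family. Regluing the disks amounts to threading the strands $X_k$ back around the $k$-th handle, which is precisely the handle-threading pattern of $\Psi'(f')$: the integral $\intL$ and the Drinfeld map $\DD$ appearing in $\Psi'$ reconstruct exactly the coend evaluation of these cycles, so $T'$ and $\Psi'(f')$ are skein equivalent after the red-to-blue conversions furnished by Lemma \ref{L:red_turns_blue}.

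The hard part will be the normal-form step: carefully showing that an arbitrary bichrome graph in the handlebody --- with both red and blue edges, possibly closed components, and arbitrary linking with the handles --- can be isotoped and skein-reduced so as to thread the chosen meridian disks in the standard way, and that the regluing operation is faithfully modelled by the dinatural structure of $\coend$ rather than merely agreeing after applying $F_\intL$. This combing argument, together with the identification of handle-threading with the dinatural transformations $i_{X_k}$, is essentially the skein-theoretic incarnation of Lyubashenko's construction; the red-to-blue operation of Lemma \ref{L:red_turns_blue} is what lets us treat red handle-threading strands uniformly, coloring them by a projective generator when the reduction requires it.
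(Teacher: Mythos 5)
Your reduction via Proposition \ref{P:connection+skein}(\textit{ii}) and your intention to land on the universal property of the coend both match the paper, but the step you yourself flag as ``the hard part'' is a genuine gap, and it is precisely where the paper's proof does its work. Your plan is to cut $M'_g$ along meridian disks $D_k$, label ``the bundle of strands crossing $D_k$ by a single object $X_k$,'' and claim that the resulting family of morphisms is dinatural in $(X_1,\ldots,X_g)$. It is not: a fixed bichrome graph has fixed colors on the strands crossing each disk, so cutting produces one morphism for one specific choice of objects, not a dinatural family. In this paper's formalism a morphism out of $\coend^{\otimes g}\otimes V$ arises only from an (unlabeled, red) cycle evaluated at all objects simultaneously, i.e.\ from a bottom-graph presentation as in Section \ref{SS:LRT_functor}; you have not explained how to convert the arbitrarily colored blue and red strands running through a handle into a single such red cycle. (Your remark that the Drinfeld map $\DD$ ``appears in $\Psi'$'' is also off: $\DD^{-1}$ enters the pairing $\langle\cdot,\cdot\rangle_{g,V}$ and Lemma \ref{L:comparison_of_pairings}, not the definition of $\Psi'$.)

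The paper fills this gap with a concrete device. For each handle it inserts a cancelling pair of red knots: a knot $K_i$ along the core of the $i$-th handle together with its meridian $K'_i$. Since surgery on the meridian undoes surgery on $K_i$, Proposition \ref{P:surgery_axioms} with $k=2$ gives $[M'_g,T',0]=\calD^{-2}[M'_g,T'\cup K_i\cup K'_i,0]$, so this costs only the explicit scalar $\calD^{-2g}$ overall. One then slides every blue or red edge of $T'$ that crosses the handle over $K_i$ (the slide trick of Figure \ref{F:slide_trick}, an $F_\intL$-invariance by Proposition \ref{P:slide}), re-routing it through the meridian $K'_i$; after this only the single red cycle $K_i$ traverses the handle. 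The resulting graph is the plat closure of a $g$-bottom graph, a complete bottom-graph presentation of which yields, via Equation \eqref{E:f_calC}, the morphism $f_\calC(\eta_{\tilde T'''})\in\calC(\coend^{\otimes n+g}\otimes V,\one)$, and $f'$ is obtained by capping the extra $n$ factors with $\intL$ and multiplying by $\calD^{-2g}$. Without this Hopf-pair insertion and slide argument (or a worked-out substitute), your proposal does not establish the normal form and so does not prove surjectivity.
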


\begin{proof}
 Thanks to Proposition \ref{P:connection+skein}, we only need to show that for every bichrome graph $T' \in \calV'(M'_g;\bbSigma_{g,V})$ there exists a morphism $f' \in \calX'_{g,V}$ satisfying 
 \[
  [M'_g,T',0] = [M'_g,\Psi'(f'),0].
 \]
 Note that if $K \subset M'_g \smallsetminus T'$ is a red knot and $K' \subset M'_g \smallsetminus (T' \cup K)$ is a red meridian of $K$ then, since surgery along $K'$ reverses the effect of performing surgery along $K$, Proposition \ref{P:surgery_axioms} with $k = 2$ yields 
 \[
  [M'_g,T',0] = \calD^{-2} [M'_g,T' \cup K \cup K',0].
 \]
 Therefore we can choose, for every integer $1 \leq i \leq g$, a red knot $K_i \subset M'_g \smallsetminus T'$ which runs along the core of the $i$-th index 1 handle of $M'_g$. Then, up to isotopy, we can suppose every blue or red edge of $T'$ crossing the handle appears as in the left-hand side of Figure \ref{F:slide_trick}, and we can slide it off the handle and into the meridian $K'_i$ as shown in the right-hand side. 

 \begin{figure}[h]
  \centering
  \includegraphics{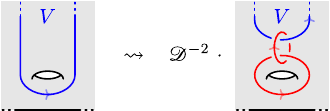}
  \caption{Slide trick.}
  \label{F:slide_trick}
 \end{figure} 
 
 The resulting bichrome graph $T''$ can be written as the plat closure of a $g$-bottom graph $\tilde{T}'' \in (g)\calR_\intL((V,+),\varnothing)$ embedded in $M'_g$ as shown in Figure \ref{F:bottom_graph}. Now let $\tilde{T}''' \in (n+g)\calR_\intL((V,+),\varnothing)$ be a complete $n+g$-bottom graph presentation of $T''$ whose partial plat closure along the $n$ left-most pairs of boundary vertices coincides with $\tilde{T}''$. Then, following the procedure for the definition of the functor $F_\intL$, as in Subsection \ref{SS:LRT_functor}, $\tilde{T}'''$ induces a morphism 
 \[
  f_\calC(\eta_{\tilde{T}'''}) \in \calC(\coend^{\otimes n+g} \otimes V,\one).
 \]
 The result is obtained by setting,
 \[
  f' := \calD^{-2g} \cdot f_\calC \left( \eta_{\tilde{T}'''} \right) \circ (\intL^{\otimes n} \otimes \id_{\coend^{\otimes g} \otimes V})
 \] 
 since by construction $[M'_g,T',0] = [M'_g,\Psi'(f'),0]$.

 \begin{figure}[b]
  \centering
  \includegraphics{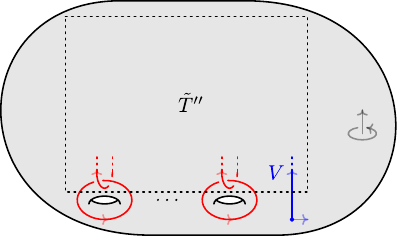}
  \caption{Presentation of the bichrome graph $T''$ as the plat closure of the $g$-bottom graph $\tilde{T}''$.}
  \label{F:bottom_graph}
 \end{figure}

\end{proof}

\begin{remark}\label{R:dimension}
 Lemma \ref{L:morphism_to_skein_prime} yields
 \[
  \dim_\Bbbk \rmX_{g,V} = \dim_\Bbbk \rmX'_{g,V} = \dim_\Bbbk \calX'_{g,V} \geq \dim_\Bbbk \rmV'_\calC(\bbSigma_{g,V}) = \dim_\Bbbk \rmV_\calC(\bbSigma_{g,V}).
 \]
\end{remark}

\FloatBarrier

\begin{lemma}\label{L:morphism_to_skein}
 There exists an injective linear map $\Phi : \rmX_{g,V} \to \rmV_\calC(\bbSigma_{g,V})$ yielding the commutative diagram
 \begin{center}
  \begin{tikzpicture}[descr/.style={fill=white}]
   \node (P0) at (45:{sqrt(2)}) {$\calV(M_g;\bbSigma_{g,V})$};
   \node (P1) at (45+90:{sqrt(2)}) {$\calX_{g,V}$};
   \node (P2) at (45+2*90:{sqrt(2)}) {$\rmX_{g,V}$};
   \node (P3) at (45+3*90:{sqrt(2)}) {$\rmV(\bbSigma_{g,V})$};
   \draw
   (P1) edge[->] node[above] {$\Psi$} (P0)
   (P1) edge[->] node[left] {$[ {} \cdot {} ]$} (P2)
   (P2) edge[->] node[below] {$\Phi$} (P3)
   (P0) edge[->] node[right] {$\pi_{\bbSigma_{g,V}}$} (P3);
  \end{tikzpicture}
 \end{center}
\end{lemma}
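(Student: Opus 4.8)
The plan is to define $\Phi$ as the map induced by $\pi_{\bbSigma_{g,V}} \circ \Psi$ on the quotient $\rmX_{g,V}$, so that commutativity of the diagram holds tautologically; the content is then to check that this map is well defined and injective. Both facts will follow from a single pairing identity: there is a fixed non-zero constant $c_g \in \Bbbk^*$, independent of $f$ and $f'$, such that
\[
 \brk{\pi'_{\bbSigma_{g,V}}(\Psi'(f')),\pi_{\bbSigma_{g,V}}(\Psi(f))}_{\bbSigma_{g,V}} = c_g \brk{f',f}_{g,V}
\]
for all $f \in \calX_{g,V}$ and $f' \in \calX'_{g,V}$.

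I would first establish this identity. By definition of the pairing on state spaces, its left-hand side equals $\rmL'_\calC\big((M'_g,\Psi'(f'),0) \circ (M_g,\Psi(f),0)\big)$. The composite is a closed connected morphism whose underlying manifold is the Heegaard-split $S^3 = M_g \cup_{\Sigma_g} M'_g$, and whose signature defect is a fixed Maslov index $-\mu$ depending only on the Lagrangians, hence independent of $f,f'$. Using the embeddings $\iota_g,\iota'_g$ of Figure \ref{F:surgery_presentation}, the glued bichrome graph $\Psi'(f') \cup_{P_{(+,V)}} \Psi(f)$ is isotopic in $S^3$ to the graph $T_{f',f}$ of Figure \ref{F:comparison_of_pairings}: matching the end-type structure of $\Psi(f)$ against the coend-type structure of $\Psi'(f')$ along the $g$ handles produces exactly the red Hopf-link cabling realizing the factors $(\DD^{-1})^{\otimes g}$ and $\zeta^g$, as in the dashed-box computation $F_\intL(\tilde{T}) = \zeta\DD^{-1}$ of Lemma \ref{L:comparison_of_pairings}. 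Evaluating $\rmL'_\calC$ and invoking Lemma \ref{L:comparison_of_pairings} then gives the identity with $c_g = \delta^{-\mu}\calD^{-1}\zeta^g \neq 0$ (that $c_g\neq 0$ uses only $\calD,\zeta \in \Bbbk^*$; recall also $\zeta = \calD^2$ by Corollary \ref{C:non-degeneracy}).

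Granting the identity, well-definedness of $\Phi$ is immediate. If $f$ lies in the right radical of $\brk{\cdot,\cdot}_{g,V}$, then $\brk{f',f}_{g,V} = 0$ for all $f'$, so $\pi_{\bbSigma_{g,V}}(\Psi(f))$ pairs to zero against every vector $\pi'_{\bbSigma_{g,V}}(\Psi'(f'))$. By Lemma \ref{L:morphism_to_skein_prime} these vectors span $\rmV'_\calC(\bbSigma_{g,V})$, and since the induced pairing $\brk{\cdot,\cdot}_{\bbSigma_{g,V}}$ on $\rmV'_\calC(\bbSigma_{g,V}) \otimes \rmV_\calC(\bbSigma_{g,V})$ is non-degenerate, it follows that $\pi_{\bbSigma_{g,V}}(\Psi(f)) = 0$. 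Hence $\pi_{\bbSigma_{g,V}} \circ \Psi$ factors through the projection $[\,\cdot\,] : \calX_{g,V} \to \rmX_{g,V}$, which defines $\Phi$ and makes the diagram commute.

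Finally, injectivity runs the same argument in reverse: if $\Phi([f]) = \pi_{\bbSigma_{g,V}}(\Psi(f)) = 0$, then it pairs to zero against all of $\rmV'_\calC(\bbSigma_{g,V})$, in particular against every $\pi'_{\bbSigma_{g,V}}(\Psi'(f'))$, so the identity forces $c_g\brk{f',f}_{g,V} = 0$ and hence $\brk{f',f}_{g,V} = 0$ for all $f'$; thus $f$ is in the right radical and $[f] = 0$. The main obstacle is the geometric step inside the pairing identity, namely verifying that gluing $\Psi(f)$ and $\Psi'(f')$ across the Heegaard surface yields, in the standard surgery picture, precisely the graph $T_{f',f}$. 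This amounts to careful bookkeeping of the definitions of $\Psi$, $\Psi'$ and of the embeddings $\iota_g,\iota'_g$, combined with the fact that pairing the end $\eend$ against the coend $\coend$ along each handle produces exactly the Drinfeld-map cabling whose evaluation is computed in Lemma \ref{L:comparison_of_pairings}.
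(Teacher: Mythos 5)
Your proposal is correct and follows essentially the same route as the paper: both reduce well-definedness and injectivity of $\Phi$ to the identity $\rmL'_\calC((M'_g,\Psi'(f'),0) \circ (M_g,\Psi(f),0)) = c_g\,\brk{f',f}_{g,V}$ with $c_g \neq 0$, proved by gluing along the Heegaard surface, disentangling and discarding the red Hopf links, and invoking Lemma~\ref{L:comparison_of_pairings}, then conclude via the surjectivity statement of Lemma~\ref{L:morphism_to_skein_prime} and the non-degeneracy of the induced pairing on state spaces. The only cosmetic difference is that you track a possible Maslov-index contribution $\delta^{-\mu}$ in the constant, which the paper takes to vanish; this does not affect the argument.
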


\begin{proof}
 \begin{figure}[b]
  \centering
  \includegraphics{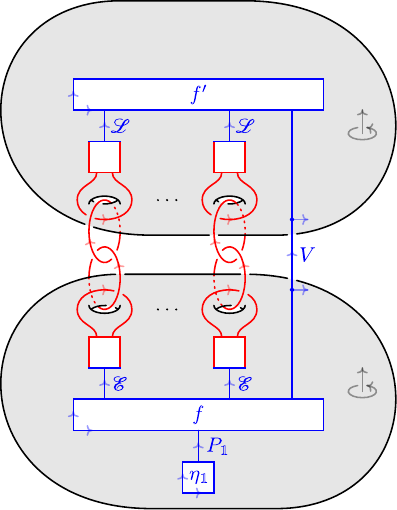}
  \caption{The admissible bichrome graph $\tilde{T}_{f',f}$.}
  \label{F:Heegaard_splitting}
 \end{figure}
 We need to show that $f \in \ker [ {} \cdot {} ] \subset \calX_{g,V}$ if and only if
 \[
  [M_g,\Psi(f),0] = 0 \in \rmV(\bbSigma_{g,V}).
 \]
 Thanks to Lemma \ref{L:morphism_to_skein_prime}, this happens if and only if
 \[
  \rmL'_\calC((M'_g,\Psi'(f'),0) \circ (M_g,\Psi(f),0)) = 0
 \]
 for every $f' \in \calX'_{g,V}$. This invariant can be computed using the admissible bichrome graph $\tilde{T}_{f',f}$ of Figure \ref{F:Heegaard_splitting}. Note that we can disentangle all red Hopf links from the rest of the bichrome graph by using them to slide all bottom red cycles upwards and all top red cycles downwards. Once this has been done, we can remove all red Hopf links by appealing to Proposition~\ref{P:surgery_axioms}, since they induce trivial surgeries. This yields the admissible bichrome graph $T_{f',f}$ of Figure \ref{F:comparison_of_pairings}. Then Lemma \ref{L:comparison_of_pairings} gives 
 \begin{align*}
  \rmL'_\calC((M'_g,\Psi'(f'),0) \circ (M_g,\Psi(f),0)) &= \calD^{-1-2g} F'_\intL(\tilde{T}_{f',f}) \\*
  &= \calD^{-1} F'_\intL(T_{f',f}) \\*
  &= \calD^{-1+2g} \langle f',f \rangle_{g,V}. \qedhere
 \end{align*}
\end{proof}

\begin{proposition}\label{P:state-space-iso}
 The linear map $\Phi : \rmX_{g,V} \to \rmV_\calC(\bbSigma_{g,V})$ is an isomorphism.
\end{proposition}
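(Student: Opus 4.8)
The plan is to combine the injectivity of $\Phi$ established in Lemma \ref{L:morphism_to_skein} with the dimension bound recorded in Remark \ref{R:dimension}, thereby reducing the proposition to a clean dimension count. Since $\calC$ is a finite tensor category, all the morphism spaces $\calX_{g,V}$ and $\calX'_{g,V}$, and hence their quotients $\rmX_{g,V}$ and $\rmX'_{g,V}$ as well as the state spaces $\rmV_\calC(\bbSigma_{g,V})$ and $\rmV'_\calC(\bbSigma_{g,V})$, are finite-dimensional. It therefore suffices to show that $\rmX_{g,V}$ and $\rmV_\calC(\bbSigma_{g,V})$ have the same dimension.

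First I would recall the chain of (in)equalities assembled in Remark \ref{R:dimension}:
\[
 \dim_\Bbbk \rmX_{g,V} = \dim_\Bbbk \rmX'_{g,V} = \dim_\Bbbk \calX'_{g,V} \geq \dim_\Bbbk \rmV'_\calC(\bbSigma_{g,V}) = \dim_\Bbbk \rmV_\calC(\bbSigma_{g,V}).
\]
Here the first equality uses the non-degeneracy of $\langle \cdot,\cdot \rangle_{g,V}$ on $\rmX'_{g,V} \otimes \rmX_{g,V}$, the second uses Lemma \ref{L:left-non-degeneracy}, the inequality uses the surjectivity of $\pi'_{\bbSigma_{g,V}} \circ \Psi'$ from Lemma \ref{L:morphism_to_skein_prime}, and the final equality uses the non-degeneracy of the induced pairing on $\rmV'_\calC(\bbSigma_{g,V}) \otimes \rmV_\calC(\bbSigma_{g,V})$. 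This yields the inequality $\dim_\Bbbk \rmX_{g,V} \geq \dim_\Bbbk \rmV_\calC(\bbSigma_{g,V})$.

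On the other hand, the injectivity of $\Phi$ from Lemma \ref{L:morphism_to_skein} immediately gives the reverse inequality $\dim_\Bbbk \rmX_{g,V} \leq \dim_\Bbbk \rmV_\calC(\bbSigma_{g,V})$. Combining the two forces the equality $\dim_\Bbbk \rmX_{g,V} = \dim_\Bbbk \rmV_\calC(\bbSigma_{g,V})$, and an injective linear map between finite-dimensional vector spaces of equal dimension is automatically an isomorphism. Hence $\Phi$ is the claimed isomorphism.

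Since the substantive topological and algebraic content has already been carried out in Lemmas \ref{L:left-non-degeneracy}, \ref{L:morphism_to_skein_prime}, and \ref{L:morphism_to_skein}, there is no genuine obstacle remaining at this stage. The only points that require care are tracking that every space in play is finite-dimensional (guaranteed by finiteness of $\calC$), so that the dimension comparison is legitimate, and observing that the surjectivity of $\pi'_{\bbSigma_{g,V}} \circ \Psi'$ and the injectivity of $\Phi$ squeeze the dimension of $\rmV_\calC(\bbSigma_{g,V})$ from opposite sides through the two non-degenerate pairings.
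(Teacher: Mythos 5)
Your proposal is correct and is exactly the argument the paper has in mind: the proof there simply cites Remark~\ref{R:dimension} and Lemma~\ref{L:morphism_to_skein}, and your write-up spells out the intended dimension squeeze (the chain of (in)equalities giving $\dim_\Bbbk \rmX_{g,V} \geq \dim_\Bbbk \rmV_\calC(\bbSigma_{g,V})$, the reverse inequality from injectivity of $\Phi$, and the standard fact that an injection between finite-dimensional spaces of equal dimension is an isomorphism). No further comment is needed.
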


\begin{proof}
 The claim is a direct consequence of Remark \ref{R:dimension} and of Lemma \ref{L:morphism_to_skein}.
\end{proof}

\FloatBarrier

\addtocontents{toc}{\protect\setcounter{tocdepth}{0}}

\subsection*{Acknowledgments}

The present work was initiated at the meeting \textit{Non-Sem\-i\-sim\-ple TFT and Logarithmic CFT} (Hamburg 2018), which was funded by the Research Training Group 1670 of the DFG. The authors would like to thank the organizers of the conferences \textit{TQFT and Categorification} (Cargèse 2018) and \textit{Geometric and Categorical Aspects of CFTs} (Oaxaca 2018), where the foundations for this paper were further developed. The authors are also grateful to the referee for many helpful comments. MD is supported by JSPS and partially by KAKENHI Grant-in-Aid for JSPS Fellows 19F19765. AMG is supported by CNRS, and thanks Humboldt Foundation and ANR grant JCJC ANR-18-CE40-0001 for a partial financial support. AMG is also grateful to Utah State University for its kind hospitality during June 2019. NG was partially supported by the NSF grant DMS-1452093. IR is partially supported by the Deutsche Forschungsgemeinschaft via the Research Training Group 1670 and the Cluster of Excellence EXC 2121.

\addtocontents{toc}{\protect\setcounter{tocdepth}{2}}

\appendix

\section{Proofs}\label{A:proofs}

In this appendix we collect some technical proofs of results which were announced in earlier sections. All the arguments follow a recurring pattern, which is why we gather them all here.

\addtocontents{toc}{\protect\setcounter{tocdepth}{0}}

\subsection{Proof of results from Section \ref{SS:LRT_functor}}\label{S:proof_LRT_functor}

\begin{proof}[Proof of Proposition \ref{P:LRT_functor}]
 
 \begin{figure}[b]
  \centering
  \includegraphics{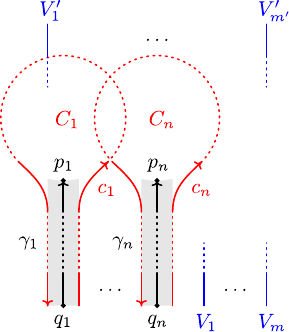}
  \caption{Diagram representing $\tilde{T}$.}
  \label{F:LRT_functor}
 \end{figure}
  
 Before showing that $F_\intL : \calR_\intL \to \calC$ is a well-defined monoidal functor, let us start by showing that if $T : (\underline{\epsilon},\underline{V}) \to (\underline{\epsilon'},\underline{V'})$ is a morphism of $\calR_\intL$ featuring exactly $n$ cycles $C_1, \ldots, C_n$, then there exists a complete $n$-bottom graph presentation $\tilde{T}$ of $T$. In order to define it, let us choose basepoints $p_k$ along some red edge $c_k \in C_k$ and $q_k$ in the interior of the arc $e_k \subset \R \times \{ (0,0) \} \subset \R^2 \times I$ joining the $2k-1$th and the $2k$th unlabeled red vertices of $(n)(\underline{\epsilon},\underline{V})$ for every integer $1 \leq k \leq n$. Then we consider pairwise disjoint embeddings $\iota_1, \ldots, \iota_n$ of $D^1 \times D^1$ into $\R^2 \times I$ with $\iota_k((D^1 \smallsetminus \partial D^1) \times D^1)$ contained in the complement of $T$, with $(\iota_k(\{ -1 \} \times D^1),\iota_k(-1,0)) \subset (c_k,p_k)$, and with $(\iota_k(\{ 1 \} \times \overline{D^1}),\iota_k(1,0)) = (e_k,q_k)$ for every integer $1 \leq k \leq n$. We denote with $\gamma_k$ the framed path from $q_k$ to $p_k$ determined by $\iota_k(D^1 \times \{ 0 \})$ with framing orthogonal to the image of $\iota_k$, and we let $\tilde{T}$ be the $n$-bottom graph obtained from $T$ by replacing $\iota_k(\{ -1 \} \times D^1)$ with $\iota_k(D^1 \times \partial D^1)$ for every integer $1 \leq k \leq n$. Up to isotopy, this morphism can be represented by a diagram like the one depicted in Figure \ref{F:LRT_functor}. Then, by construction,
 \[
  \pc (\tilde{T}) = T.
 \] 
 Now, in order to prove $F_\intL$ is a well-defined monoidal functor, we will first show its definition is independent of the choice of framed paths $\gamma_1, \ldots, \gamma_n$, of basepoints $p_1, \ldots, p_n$, and of the ordering of cycles $C_1, \ldots, C_n$, and then we will show functoriality and monoidality.
 
 \begin{figure}[t]
  \centering
  \includegraphics{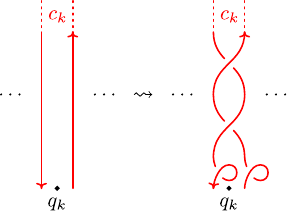}
  \caption{Kink insertion.}
  \label{F:kink_insertion}
 \end{figure}
 \begin{figure}[b]
  \centering
  \includegraphics{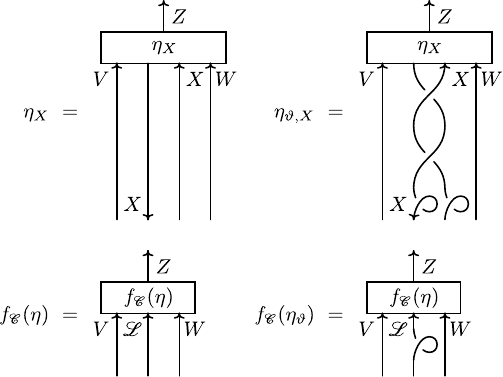}
  \caption{Component $\eta_{\theta,X}$ and induced morphism $f_\calC(\eta_\theta)$.}
  \label{F:independence_of_framings}
 \end{figure}
 
 \textit{Independence of framings.} First, we claim $F_\intL$ is independent of the choice of framings. In order to prove this, it is sufficient to show that we can insert a positive kink on any framed path $\gamma_k$ for every integer $1 \leq k \leq n$. Up to isotopy, a kink insertion along $\gamma_k$ is represented in Figure \ref{F:kink_insertion}. Note that, since isotopies do not affect $F_\calC$, they do not affect $f_\calC$ in Equation \eqref{E:f_calC} either. Then the claim is a consequence of the following general argument: for all objects $V,W \in \calC$ let us denote with $H_{V,W} : \calC^\op \times \calC \to \calC$ the functor sending every object $(X,Y)$ of $\calC^\op \times \calC$ to the object $V \otimes X^* \otimes Y \otimes W$ of $\calC$. The universal property defining $\coend$ implies the object $V \otimes \coend \otimes W$ equipped with the morphisms $\id_V \otimes i_X \otimes \id_W$ for every $X \in \calC$ is the coend for the functor $H_{V,W}$. Then every object $Z \in \calC$ and every dinatural transformation $\eta : H_{V,W} \din Z$ induce a unique morphism $f_\calC(\eta) \in \calC(V \otimes \coend \otimes W,Z)$ satisfying
 \[
  f_\calC(\eta) \circ (\id_V \otimes i_X \otimes \id_W) = \eta_X.
 \]
 Now let us consider the dinatural transformation $\eta_\theta : H_{V,W} \din Z$ whose component $\eta_{\theta,X}$ is represented in the top right part of Figure \ref{F:independence_of_framings} for every object $X \in \calC$. The morphism $f_\calC(\eta_\theta) \in \calC(V \otimes \coend \otimes W,Z)$ induced by $\eta_\theta$ is represented in the bottom right part of Figure \ref{F:independence_of_framings}. Indeed, this follows immediately from the naturality of the twist $\theta$ and from Equation \eqref{eq:theta-braiding-prop}, which allow us to check that the morphism of Figure \ref{F:independence_of_framings} safisfies the defining equation for $f_\calC(\eta_\theta)$, which is
 \[
  f_\calC(\eta_\theta) \circ ( \id_V \otimes i_X \otimes \id_W ) = \eta_{\theta,X}.
 \]
 Then, since $\intL$ is a morphism from $\one$ to $\coend$, this means
 \begin{align*}
  f_\calC(\eta_{\theta}) \circ (\id_V \otimes \intL \otimes \id_W) &= f_\calC(\eta) \circ ( \id_V \otimes (\theta \circ \intL) \otimes \id_W ) \\*
  &= f_\calC(\eta) \circ ( \id_V \otimes \intL \otimes \id_W ).
 \end{align*}
 Therefore our claim is a direct application of the equality we just established for a fixed choice of the variables $X_1, \ldots, X_{k-1},X_{k+1}, \ldots, X_n \in \calC$, with 
 \begin{align*}
  V &= X_1^* \otimes X_1 \otimes \ldots \otimes X_{k-1}^* \otimes X_{k-1}, \\*
  W &= X_{k+1}^* \otimes X_{k+1} \otimes \ldots \otimes X_n^* \otimes X_n \otimes F_\calC(\underline{\epsilon},\underline{V}), \\*
  Z &= F_\calC(\underline{\epsilon'},\underline{V}'), \\*
  \eta &= F_\calC(\tilde{T}_{(X_1, \ldots, X_{k-1},X_{k+1}, \ldots, X_n)}),
 \end{align*}
 where for every $X_k \in \calC$ the component $F_\calC(\tilde{T}_{(X_1, \ldots, X_{k-1},X_{k+1}, \ldots, X_n)})_{X_k}$ of the dinatural transformation $F_\calC(\tilde{T}_{(X_1, \ldots, X_{k-1},X_{k+1}, \ldots, X_n)})$ is given by $F_\calC(\tilde{T}_{(X_1, \ldots, X_n)})$, with $\tilde{T}_{(X_1, \ldots, X_n)}$ defined as in the construction of $F_\intL(T)$.
 
 \begin{figure}[b]
  \centering
  \includegraphics{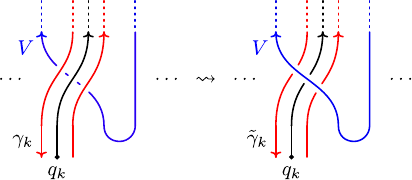}
  \caption{Crossing exchange.}
  \label{F:crossing_exchange}
 \end{figure}
 \begin{figure}[t]
  \centering
  \includegraphics{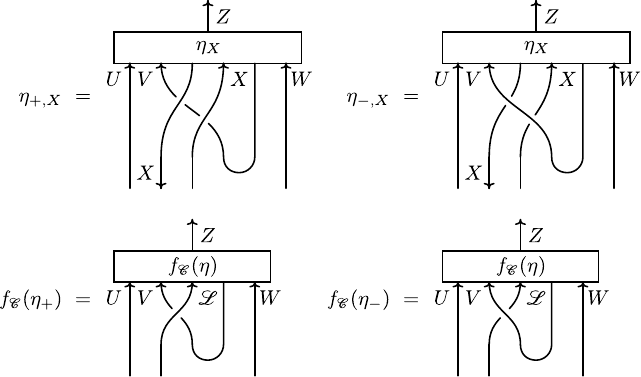}
  \caption{Components $\eta_{+,X}$ and $\eta_{-,X}$ and induced morphisms $f_\calC(\eta_+)$ and $f_\calC(\eta_-)$.}
  \label{F:independence_of_paths}
 \end{figure}
 
 \textit{Independence of paths.} Next, we claim $F_\intL$ is independent of the choice of framed paths $\gamma_k$ connecting $q_k$ to $p_k$ for every integer $1 \leqslant k \leqslant n$. In order to prove this, it is sufficient to show that every overcrossing of $\gamma_k$ with the rest of $\tilde{T}$ can be exchanged for an undercrossing. Up to isotoping the desired double point to the bottom of the diagram, an exchange of a crossing of $\gamma_k$ with a blue edge is represented in Figure \ref{F:crossing_exchange}, and the same picture applies to an exchange of a crossing of $\gamma_k$ with a red edge. Then, just like we argued before, the claim is a consequence of the following general argument: for all objects $U,V,W \in \calC$ let us denote with $H_{U,V,W} : \calC^\op \times \calC \to \calC$ the functor sending every object $(X,Y)$ of $\calC^\op \times \calC$ to the object ${U \otimes V \otimes X^* \otimes Y \otimes V^* \otimes W}$ of $\calC$. Then, for every object $Z \in \calC$ and every dinatural transformation $\eta : H_{U,V,W} \din Z$, let us consider the dinatural transformations $\eta_+,\eta_- : H_{U,\one,W} \din Z$ whose components $\eta_{+,X}$ and $\eta_{-,X}$ are represented in the top part of Figure \ref{F:independence_of_paths} for every object $X \in \calC$. The morphisms $f_\calC(\eta_+),f_\calC(\eta_-) \in \calC(U \otimes \coend \otimes W,Z)$ induced by $\eta_+$ and $\eta_-$ are represented in the bottom part of Figure \ref{F:independence_of_paths}. Then, since $\intL$ is a morphism from $\one$ to $\coend$, this means
 \begin{align*}
  &f_\calC(\eta_+) \circ (\id_U \otimes \intL \otimes \id_W) \\*
  &\hspace{\parindent}= f_\calC(\eta) \circ ( \id_U \otimes ((c_{\coend,V} \otimes \id_{V^*}) \circ (\intL \otimes \lcoev_V)) \otimes \id_W ) \\*
  &\hspace{\parindent}= f_\calC(\eta) \circ ( \id_U \otimes ((c^{-1}_{V,\coend} \otimes \id_{V^*}) \circ (\intL \otimes \lcoev_V)) \otimes \id_W ) \\*
  &\hspace{\parindent}= f_\calC(\eta_-) \circ (\id_U \otimes \intL \otimes \id_W).
 \end{align*}

 \FloatBarrier

 \begin{figure}[b]
  \centering
  \includegraphics{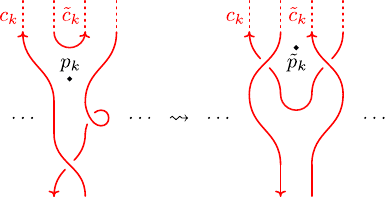}
  \caption{Basepoint change.}
  \label{F:basepoint_change}
 \end{figure}
 \begin{figure}[t]
  \centering
  \includegraphics{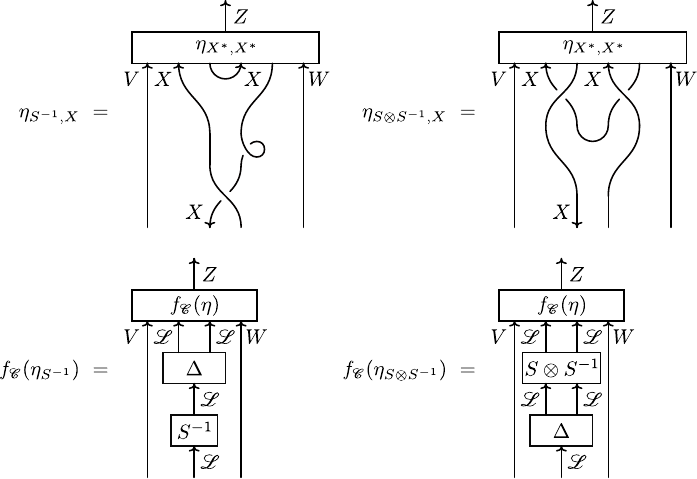}
  \caption{Components $\eta_{\antipL^{-1},X}$ and $\eta_{\antipL \otimes \antipL^{-1},X}$ and induced morphisms $f_\calC(\eta_{\antipL^{-1}})$ and $f_\calC(\eta_{\antipL \otimes \antipL^{-1}})$.}
  \label{F:independence_of_basepoints}
 \end{figure} 
 
 \textit{Independence of basepoints.} Next, we claim $F_\intL$ is independent of the choice of basepoints $p_k \in c_k \in C_k$ for every integer $1 \leqslant k \leqslant n$. Indeed, let $\tilde{p}_k \in \tilde{c}_k \in C_k$ be another possible choice. Up to isotoping portions of $c_k$ and $\tilde{c}_k$ containing $p_k$ and $\tilde{p}_k$ respectively to the bottom of the diagram, making sure the one containing $\tilde{p}_k$ is nested inside the one containing $p_k$ with opposite orientation, this operation is represented in Figure \ref{F:basepoint_change}. Then, once again, the claim is a consequence of the following general argument: for all objects $V,W \in \calC$ let us denote with ${H_{V,W} : (\calC^\op \times \calC)^{\times 2} \to \calC}$ the functor sending every object $(X_1,Y_2,X_1,X_2)$ of $(\calC^\op \times \calC)^{\times 2}$ to the object $V \otimes X_1^* \otimes Y_1 \otimes X_2^* \otimes Y_2 \otimes W$ of $\calC$. Then, for every object $Z \in \calC$ and every $2$-dinatural transformation $\eta : H_{V,W} \din Z$, let us consider the dinatural transformations $\eta_{\antipL^{-1}},\eta_{\antipL \otimes \antipL^{-1}} : H_{V,W} \din Z$ whose components $\eta_{\antipL^{-1},X}$ and $\eta_{\antipL \otimes \antipL^{-1},X}$ are represented in the top part of Figure \ref{F:independence_of_basepoints} for every object $X \in \calC$. The morphisms $f_\calC(\eta_{\antipL^{-1}}),f_\calC(\eta_{\antipL \otimes \antipL^{-1}}) \in \calC(V \otimes \coend \otimes W,Z)$ induced by $\eta_{\antipL^{-1}}$ and $\eta_{\antipL \otimes \antipL^{-1}}$ are represented in the bottom part of Figure \ref{F:independence_of_basepoints}, as follows from the definition of the coproduct $\copL$ and of the antipode $\antipL$ in Figure \eqref{E:antipode}. Then, thanks to Equation \eqref{eq:brHopf-identity}, this gives
 \begin{align*}
  &f_\calC(\eta_{\antipL^{-1}}) \circ (\id_V \otimes \intL \otimes \id_W) \\*
  &\hspace{\parindent}= f_\calC(\eta) \circ ( \id_V \otimes (\copL \circ \antipL^{-1} \circ \intL) \otimes \id_W ) \\*
  &\hspace{\parindent}= f_\calC(\eta) \circ ( \id_V \otimes ((\antipL \otimes \antipL^{-1}) \circ \copL \circ \intL) \otimes \id_W ) \\*
  &\hspace{\parindent}= f_\calC(\eta_{\antipL \otimes \antipL^{-1}}) \circ (\id_V \otimes \intL \otimes \id_W).
 \end{align*}

 \FloatBarrier

 \begin{figure}[b]
  \centering
  \includegraphics{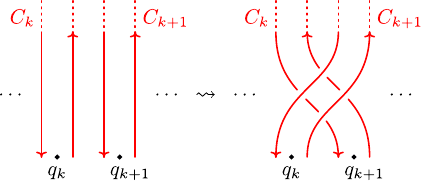}
  \caption{Cycle transposition.}
  \label{F:cycle_transposition}
 \end{figure}
 \begin{figure}[t]
  \centering
  \includegraphics{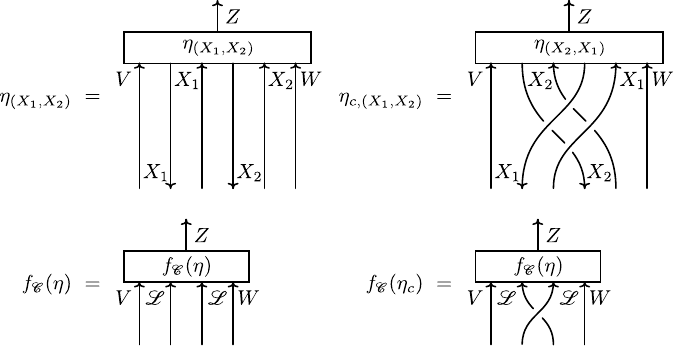}
  \caption{Component $\eta_{c,(X_1,X_2)}$ and induced morphism $f_\calC(\eta_c)$.}
  \label{F:independence_of_order}
 \end{figure}

 \textit{Independence of order.} Now, we claim $F_\intL$ is independent of the choice of the ordering of cycles. In order to prove this, it is sufficient to show that cycles $C_k$ and $C_{k+1}$ can be transposed for every integer $1 \leqslant k < n$. This operation is represented in Figure \ref{F:cycle_transposition}. Then, once again, the claim is a consequence of the following general argument: for all objects $V,W \in \calC$ let us denote with $H_{V,W} : (\calC^\op \times \calC)^{\times 2} \to \calC$ the functor sending every object $(X_1,Y_1,X_2,Y_2)$ of $(\calC^\op \times \calC)^{\times 2}$ to the object $V \otimes X_1^* \otimes Y_1 \otimes X_2^* \otimes Y_2 \otimes W$ of $\calC$. Then, for every object $Z \in \calC$ and every $2$-dinatural transformation $\eta : H_{V,W} \din Z$, let us consider the $2$-dinatural transformation ${\eta_c : H_{V,W} \din Z}$ whose component $\eta_{c,(X_1,X_2)}$ is represented in the top-right part of Figure \ref{F:independence_of_order} for every object $(X_1,X_2) \in \calC^{\times 2}$. The morphism $f_\calC(\eta_c) \in \calC(V \otimes \coend \otimes \coend \otimes W,Z)$ induced by $\eta_c$ is represented in the bottom-right part of Figure \ref{F:independence_of_order}. Then, since $\intL$ is a morphism from $\one$ to $\coend$, this means
 \begin{align*}
  f_\calC(\eta_c) \circ (\id_V \otimes \intL \otimes \intL \otimes \id_W) &= f_\calC(\eta) \circ ( \id_V \otimes (c_{\coend,\coend} \circ (\intL \otimes \intL)) \otimes \id_W ) \\*
  &= f_\calC(\eta) \circ ( \id_V \otimes \intL \otimes \intL \otimes \id_W ).
 \end{align*}

 \FloatBarrier

 \begin{figure}[b]
  \centering
  \includegraphics{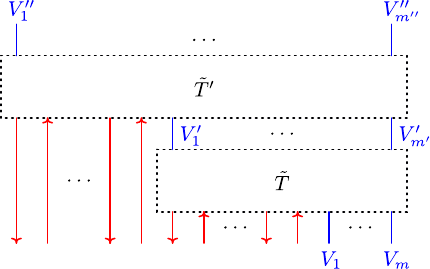}
  \caption{Complete $n+n'$-bottom graph presentation of $T' \circ T$.}
  \label{F:functoriality}
 \end{figure}

 \textit{Functoriality.} If $(\underline{\epsilon},\underline{V})$ is an object of $\calR_\intL$, then we clearly have
 \[
  F_\intL(\id_{(\underline{\epsilon},\underline{V})}) = \id_{F_\intL(\underline{\epsilon},\underline{V})}.
 \]
 If $T : (\underline{\epsilon},\underline{V}) \to (\underline{\epsilon'},\underline{V'})$ and $T' : (\underline{\epsilon'},\underline{V'}) \to (\underline{\epsilon''},\underline{V''})$ are morphisms of $\calR_\intL$, if $\tilde{T}$ is a complete $n$-bottom graph presentation of $T$, and if $\tilde{T}'$ is a complete $n'$-bottom graph presentation of $T'$, then a complete $n+n'$-bottom graph presentation of $T' \circ T$ is represented in Figure \ref{F:functoriality}. Therefore
 \[
  F_\intL(T' \circ T) = F_\intL(T') \circ F_\intL(T).
 \]

 \textit{Monoidality.} If $(\underline{\epsilon},\underline{V})$ and $(\underline{\epsilon'},\underline{V'})$ are objects of $\calR_\intL$, then we clearly have
 \[
  F_\intL((\underline{\epsilon},\underline{V}) \otimes (\underline{\epsilon'},\underline{V'})) = F_\intL(\underline{\epsilon},\underline{V}) \otimes F_\intL(\underline{\epsilon'},\underline{V'}).
 \]
 If $T : (\underline{\epsilon},\underline{V}) \to (\underline{\epsilon'},\underline{V'})$ and $T' : (\underline{\epsilon''},\underline{V''}) \to (\underline{\epsilon'''},\underline{V'''})$ are morphisms of $\calR_\intL$, if $\tilde{T}$ is a complete $n$-bottom graph presentation of $T$, and if $\tilde{T}'$ is a complete $n'$-bottom graph presentation of $T'$, then a complete $n+n'$-bottom graph presentation of $T \otimes T'$ is represented in Figure \ref{F:monoidality}. Therefore
 \[
  F_\intL(T \otimes T') = F_\intL(T) \otimes F_\intL(T'). \qedhere
 \]

 \begin{figure}[t]
  \centering
  \includegraphics{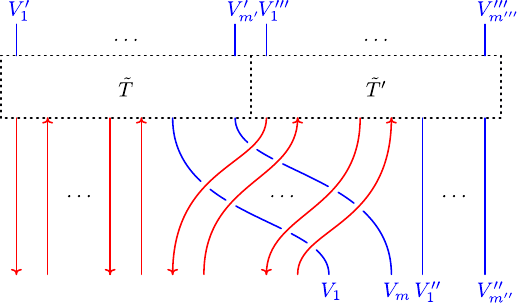}
  \caption{Complete $n+n'$-bottom graph presentation of $T \otimes T'$.}
  \label{F:monoidality}
 \end{figure}

 \FloatBarrier

\end{proof}

\subsection{Proof of results from Section \ref{SS:3-manifold_invariants}}\label{S:proof_orient_slide}

\begin{proof}[Proof of Proposition \ref{P:orient}]

 \begin{figure}[b]
  \centering
  \includegraphics{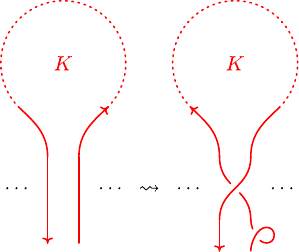}
  \caption{Orientation reversal.}
  \label{F:orientation_reversal}
 \end{figure} 
 \begin{figure}[t]
  \centering
  \includegraphics{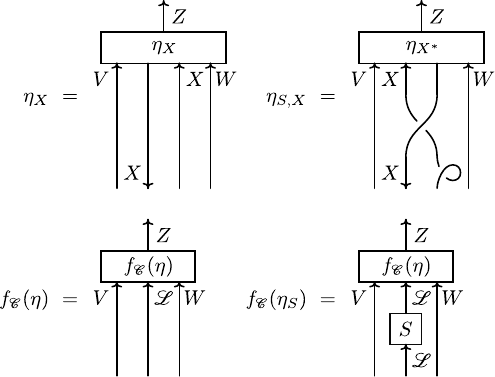}
  \caption{Component $\eta_{\antipL,X}$ and induced morphism $f_\calC(\eta_\antipL)$.}
  \label{F:independence_of_orientation}
 \end{figure}

 The proof follows from the analogous result for the original Lyubashenko invariant, see \cite[Prop.~5.2.1~\&~Fig.~10]{L94}. Indeed, let $T' \in (n)\calR_\intL((\underline{\epsilon},\underline{V}),(\underline{\epsilon'},\underline{V'}))$ be an $n$-bottom graph presentation of $T$, with $K$ appearing in $k$th position. The operation of reversing the orientation of $K$ affects $T'$ as shown in Figure \ref{F:orientation_reversal}. Now, as usual, the statement is a consequence of the following general argument: for all objects $V,W \in \calC$ let us denote with $H_{V,W} : \calC^\op \times \calC \to \calC$ the functor sending every object $(X,Y)$ of $\calC^\op \times \calC$ to the object $V \otimes X^* \otimes Y \otimes W$ of $\calC$. Then, for every object $Z \in \calC$ and every dinatural transformation $\eta : H_{V,W} \din Z$, let us consider the dinatural transformation $\eta_\antipL : H_{V,W} \din Z$ whose component $\eta_{\antipL,X}$ is represented in the top-right part of Figure \ref{F:independence_of_orientation} for every object $X \in \calC$. The morphism $f_\calC(\eta_\antipL) \in \calC(V \otimes \coend \otimes W,Z)$ induced by $\eta_\antipL$ is represented in the bottom-right part of Figure \ref{F:independence_of_orientation}, as follows from the definition of the antipode $\antipL$ in Equation~\eqref{E:antipode}. Then, thanks to part (\textit{ii}) of Lemma \ref{L:identities}, this means
 \begin{align*}
  f_\calC(\eta_\antipL) \circ ( \id_V \otimes \intL \otimes \id_W ) &= f_\calC(\eta) \circ (\id_V \otimes (\antipL \circ \intL) \otimes \id_W) \\*
  &= f_\calC(\eta) \circ (\id_V \otimes \intL \otimes \id_W).
 \end{align*}
 The statement for $F_\intL$ immediately implies the one for $F'_\intL$, since the latter is obtained from the former by applying the trace $\rmt$.
\end{proof}

 \FloatBarrier

\begin{proof}[Proof of Proposition \ref{P:slide}]

 \begin{figure}[b]
  \centering
  \includegraphics{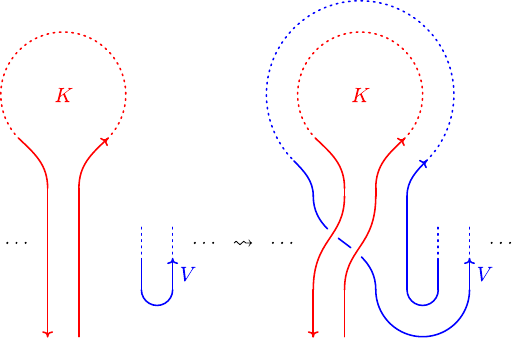}
  \caption{Edge slide.}
  \label{F:edge_slide}
 \end{figure}
 \begin{figure}[t]
  \centering
  \includegraphics{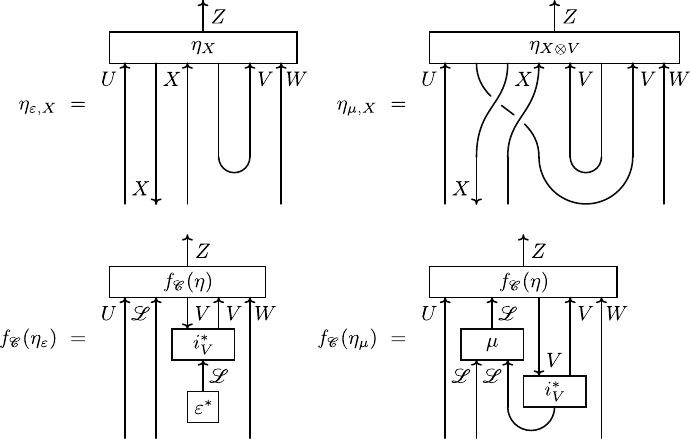}
  \caption{Components $\eta_{\epsilon,X}$ and $\eta_{\mu,X}$ and induced morphisms $f_\calC(\eta_\epsilon)$ and $f_\calC(\eta_\mu)$.}
  \label{F:invariance_under_slide}
 \end{figure}

 The proof follows from the analogous result for the original Lyubashenko invariant, see \cite[Thm.~5.2.2~\&~Fig.~11--13]{L94}. Indeed, let $T' \in (n)\calR_\intL((\underline{\epsilon},\underline{V}),(\underline{\epsilon'},\underline{V'}))$ be an $n$-bottom graph presentation of $T$, with $K$ appearing in $k$th position. The operation of sliding a blue edge over $K$ affects $T'$ as shown in Figure \ref{F:edge_slide}, and the same picture applies for the slide of a red edge. Now, as usual, the statement is a consequence of the following general argument: for all objects $U,V,W \in \calC$ let us denote with $H_{U,V,W} : \calC^\op \times \calC \to \calC$ the functor sending every object $(X,Y)$ of $\calC^\op \times \calC$ to the object $U \otimes X^* \otimes Y \otimes V^* \otimes V \otimes W$ of $\calC$. Then, for every object $Z \in \calC$ and every dinatural transformation $\eta : H_{U,V,W} \din Z$, let us consider the dinatural transformations $\eta_\counitL,\eta_\multL : H_{U,\one,W} \din Z$ whose components $\eta_{\counitL,X}$ and $\eta_{\multL,X}$ are represented in the top part of Figure \ref{F:invariance_under_slide} for every object $X \in \calC$. The morphisms $f_\calC(\eta_\counitL),f_\calC(\eta_\multL) \in \calC(U \otimes \coend \otimes W,Z)$ induced by $\eta_\counitL$ and $\eta_\multL$ are represented in the bottom part of Figure \ref{F:invariance_under_slide}, as follows from the definition of the product $\multL$ and of the counit $\counitL$ in Equations~\eqref{E:algebra_structure} and \eqref{E:coalgebra_structure}. Then, thanks to Equation \eqref{eq:right-integral-def}, this means
 \begin{align*}
  &f_\calC(\eta_\multL) \circ (\id_U \otimes \intL \otimes \id_W) \\*
  &\hspace{\parindent}= f_\calC(\eta) \circ ( \id_U \otimes (((\multL \circ (\intL \otimes \id_\coend)) \otimes i_V^*) \circ \lcoev_\coend) \otimes \id_W ) \\*
  &\hspace{\parindent}= f_\calC(\eta) \circ ( \id_U \otimes (((\intL \circ \counitL) \otimes i_V^*) \circ \lcoev_\coend) \otimes \id_W ) \\*
  &\hspace{\parindent}= f_\calC(\eta_\counitL) \circ ( \id_U \otimes \intL \otimes \id_W ).
 \end{align*}
 The statement for $F_\intL$ immediately implies the one for $F'_\intL$, since the latter is obtained from the former by applying the trace $\rmt$.
\end{proof}

\subsection{Proof of results from Section \ref{S:skein_equivalence}}\label{S:proof_red_turns_blue}

\begin{proof}[Proof of Lemma \ref{L:red_turns_blue}]
 Let us start by remarking that both $i_G \in \calC(G^* \otimes G,\coend)$ and $\epsilon_{\one} \otimes \id_V \in \calC(P_{\one} \otimes V,V)$ are epimorphisms. Indeed, the claim for $i_G$ follows from the fact that $G$ is a projective generator, see \cite[Cor.~5.1.8]{KL01}, while the one for $\epsilon_{\one} \otimes \id_V$ follows from the fact that the tensor product $\otimes$ is exact and that $\epsilon_{\one}$ is an epimorphism. Then, the existence of $f_\intL \in \calC(P_{\one},G^* \otimes G)$ and of $s_V \in \calC(V,P_{\one} \otimes V)$ follows from the fact that $P_{\one}$ and $V$ are projective. Now, invariance under the red-to-blue operation defined by Figure \ref{F:red_turns_blue} is a consequence of the following general argument: for all objects $U,W \in \calC$ let us denote with ${H_{U,W} : \calC^\op \times \calC \to \calC}$ the functor sending every object $(X,Y)$ of $\calC^\op \times \calC$ to the object ${U \otimes X^* \otimes Y \otimes V \otimes V^* \otimes W}$ of $\calC$. Then, for every object $Z \in \calC$ and every dinatural transformation $\eta : H_{U,W} \din Z$, the morphism ${f_\calC(\eta) \in \calC(U \otimes \coend \otimes V \otimes V^* \otimes W,Z)}$ induced by $\eta$ satisfies 
 \begin{align*}
  &f_\calC(\eta) \circ ( \id_U \otimes \intL \otimes \lcoev_V \otimes \id_W ) \\*
  &\hspace{\parindent}= f_\calC(\eta) \circ \left( \id_U \otimes \intL \otimes \left( (((\epsilon_{\one} \otimes \id_V) \circ s_V) \otimes \id_{V^*} ) \circ \lcoev_V \right) \otimes \id_W \right) \\*
  &\hspace{\parindent}= f_\calC(\eta) \circ \left( \id_U \otimes \left( ((((\intL \circ \epsilon_{\one}) \otimes \id_V) \circ s_V) \otimes \id_{V^*} ) \circ \lcoev_V \right) \otimes \id_W \right) \\*
  &\hspace{\parindent}= f_\calC(\eta) \circ \left( \id_U \otimes \left( ((((i_G \circ f_\intL) \otimes \id_V) \circ s_V) \otimes \id_{V^*} ) \circ \lcoev_V \right) \otimes \id_W \right) \\*
  &\hspace{\parindent}= \eta_G \circ \left( \id_U \otimes \left( (((f_\intL \otimes \id_V) \circ s_V) \otimes \id_{V^*} ) \circ \lcoev_V \right) \otimes \id_W \right).
 \end{align*}
 Therefore our claim is a direct application of the equality we just established for $U$ and $W$ determined by the source, $Z$ determined by the target, and $\eta$ determined by the cycle $C$ of $T$.
\end{proof}

\addtocontents{toc}{\protect\setcounter{tocdepth}{2}}

\end{document}